\newtheorem{theorem}{Theorem}[section]
\newtheorem{lemma}[theorem]{Lemma}
\newtheorem{proposition}[theorem]{Proposition}
\newtheorem{corollary}[theorem]{Corollary}
\newtheorem{definition}[theorem]{Definition}
\theoremstyle{definition} 
\newtheorem{remark}[theorem]{Remark}
\numberwithin{equation}{section}
\title[Projective Poincar\'{e} and Picard bundle] {Projective Poincar\'{e} and Picard bundles for moduli spaces of vector bundles over nodal curves}
\author[C. Arusha]{C. Arusha}
\address{ Indian Institute of Science Education and Research, Bhopal}
\email{arushnmaths@hotmail.com}
\author[Usha N. Bhosle]{Usha N. Bhosle}
\address{Indian Statistical Institute, Bangalore}
\email{usnabh07@gmail.com}
\author[Sanjay Kumar Singh]{Sanjay Kumar Singh}
\address{ Indian Institute of Science Education and Research, Bhopal}
\email{sanjayks@iiserb.ac.in}
\begin{document}
\subjclass[2000]{14H60 (14J60)}

\keywords{Poincar\'{e} Bundle, Picard bundle, Moduli space, Nodal curves, Stability}

\thanks{ The first and third named author would like to thank Prof. Indranil Biswas for some useful discussions.
This work was done during the tenure of the second named author as INSA Senior Scientist at Indian Statistical 
Institute, Bangalore.  A part of this work was done during the Discussion meeting on Moduli of Bundles and Related 
Structures at ICTS, Bangalore. The first and second named author would like to thank ICTS for hospitality and 
stimulating environment. The third named author is supported by the SERB Early Career Research Award (ECR/2016/000649)
by  the Department of Science \& Technology (DST), Government of India}

\date{}

\begin{abstract}
Let $U^{'s}_L(n,d)$ be the moduli space of stable vector bundles of rank $n$ with determinant $L$ where $L$ is a 
fixed line bundle of degree $d$ over a nodal curve $Y$. We prove that the projective Poincare bundle on 
$Y \times U^{'s}_L(n,d)$ and the projective Picard bundle on $U^{'s}_L(n,d)$ are stable for suitable polarisation. 
For a nonsingular point $x \in Y$, we show that the restriction of the projective Poincare bundle
to $\{x\} \times U^{'s}_L(n,d)$ is stable for any polarisation. We prove that for arithmetic genus $g\ge 3$ and for $g=n=2, d$ odd, the Picard group of the moduli space $U'_L(n,d)$ of semistable vector bundles of rank $n$ with determinant $L$ of degree $d$ is isomorphic to $\mathbb{Z}$. 
\end{abstract}

\maketitle
\section{Introduction}
Let $X$ be an irreducible smooth projective curve of genus $g\geq 3$. Let $M_X(n,d)$ denote the moduli space of stable vector bundles of rank $n$ and degree $d$ on $X$. It is well-known that there is a Poincar\'e bundle (universal vector bundle) over $X \times M_X(n, d)$ if and only if $d$ is coprime to $n$ (see \cite{Tyu} for existence in the coprime case, \cite{R} for non-existence in the non-coprime case and \cite{N2} for a topological version of non-existence in the case $d = 0$). If $n$ and $d$ have a common divisor, then there exists a projective Poincar\'e bundle over $X \times M_X(n, d)$. For a fixed line bundle $\xi$ on $X$, let $M_{\xi} \subset M_X(n,d)$ be the subvariety corresponding to vector bundles with determinant $\xi$. The existence and stability of a projective Poincar\'{e} bundle on $X\times M_{\xi}$ were proved in  \cite{BBN}. Our aim in this paper is to prove the existence and stability of a projective Poincar\'{e} bundle when $X$ is a nodal curve (for $n$ and $d$ not necessarily coprime).

         Henceforth, we assume that $Y$ is an integral projective nodal curve of arithmetic genus $g\geq 2$ defined over an algebraically closed field.  Let $p: X \to Y$ be the normalisation map.  Let $U_Y(n,d)$ be the moduli space of semistable torsion-free sheaves of rank $n$ and degree $d$ on the nodal curve $Y$ and $U'_Y(n,d)$ its open dense subvariety corresponding to vector bundles. Denote by $U'_L(n,d)$ the subvariety of $U'_Y(n,d)$ corresponding to vector bundles with determinant $L$ where $L$ is a fixed line bundle. Let $U_L(n,d)$ be its closure in $U_Y(n,d)$. The superfix $^s$ (respectively $^{ss}$) will denote the subset corresponding to stable (respectively semistable) bundles.
         
         If $(n,d)=1$, then there is a Poincar\'e sheaf  on $Y \times U_Y(n, d)$ \cite[Theorem 5.12']{N}.  We show that if $Y$ has geometric genus $g(X) \ge 2$, then for $n$ and $d$ not coprime, there does not exist a Poincar\'e sheaf on  $Y \times V$ for any Zariski open subset of $V \subset U_L(n,d)$ (see Theorem \ref{nonexistencethm}, see also Corollary  \ref{brgppf}). However, there exists a projective Poincar\'e bundle $\mathcal{PU}$ over $Y \times U^{'s}_L(n,d)$ whose restriction to $Y\times \{E\}$ is isomorphic to $P(E)$  for all $E \in U^{'s}_L(n,d)$. Let $x \in Y$ be a nonsingular point. Define $\mathcal{PU}_x = \mathcal{PU}\vert_{x\times U^{'s}_L(n,d)}\, .$ 
  Our first main results are summed up in the following theorem.

\begin{theorem} (Theorem \ref{Poincare})

	Let $Y$ be an integral projective nodal curve of arithmetic genus $g \geq 3$ defined 
over an algebraically closed field. Let $n \geq 2$ be an integer and let $L$ be a 
line bundle of degree $d$ on $Y$. Let $U^{'s}_L(n,d)$ denote the moduli space of stable vector bundles on $Y$ of rank $n$ 
and determinant $L$. Denote by $\mathcal{PU}$ the projective Poincar\'{e} bundle on $Y\times U^{'s}_L(n,d)$ and define 
$\mathcal{PU}_x = \mathcal{PU}\vert_{x\times U^{'s}_L(n,d)}\, ,$ where $x\in Y$ is a nonsingular point.
\begin{enumerate}

\item  The projective Poincar\'e bundle $\mathcal{PU}_x$ on $U^{'s}_L(n,d)$ is stable for any polarisation.  
\item Let $\eta$ and $\theta_L$ be divisors 
 defining the polarisation on $Y$ and $U^{'s}_L(n,d)$ respectively. Then the projective Poincar\'e bundle $\mathcal{PU}$ 
 is stable with respect to $a\eta+b\theta_L$, $a,b>0$.
\end{enumerate}
\end{theorem}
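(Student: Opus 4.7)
My plan is to reformulate stability in terms of the adjoint bundle: a projective bundle $\mathbb{P}(\mathcal{F})$ is $H$-stable iff the adjoint bundle $\mathrm{End}_0(\mathcal{F}) := \mathrm{End}(\mathcal{F})/\mathcal{O}$ is $H$-stable, and $\mathrm{End}_0$ descends globally even when no universal $\mathcal{U}$ exists. Both parts of the theorem thus become stability statements for the vector bundles $\mathrm{End}_0(\mathcal{PU}_x)$ and $\mathrm{End}_0(\mathcal{PU})$ on the respective bases. Since the bases are high-dimensional, I will apply the Mehta--Ramanathan restriction theorem: stability with respect to a polarisation $H$ is equivalent to semistability on a general complete intersection curve of degree $m \gg 0$ with respect to $H$, provided one already knows simplicity of the global endomorphism algebra.

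For part (1), I will first establish simplicity, i.e.\ $H^0(U^{'s}_L(n,d),\mathrm{End}_0(\mathcal{PU}_x)) = 0$: any global section restricts fibrewise to a traceless endomorphism of $P(E_x)$, which must vanish because a stable $E$ is simple. Combined with the Picard group computation stated at the end of the abstract, so that the ample cone of $U^{'s}_L(n,d)$ is a single ray generated by $\theta_L$, it remains to verify semistability of $\mathrm{End}_0(\mathcal{PU}_x)|_C$ on a generic Mehta--Ramanathan test curve. For this I will use Hecke curves in the nodal-curve framework developed by Bhosle (extending Narasimhan--Ramanan): fixing a nonsingular point $y \ne x$ and a stable bundle $E$, the Hecke modifications of $E$ at $y$ sweep out a $\mathbb{P}^{n-1} \subset U^{'s}_L(n,d)$ through $[E]$, and the restriction of $\mathrm{End}_0(\mathcal{PU}_x)$ to this rational curve can be computed explicitly from the universal Hecke modification structure. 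Semistability along a Hecke curve plus simplicity then forces stability.

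Part (2) follows a parallel pattern on $Y \times U^{'s}_L(n,d)$ with polarisation $a\eta + b\theta_L$. Simplicity of $\mathrm{End}_0(\mathcal{PU})$ follows from the Leray spectral sequence for the two projections: the restriction to $\{x\} \times U^{'s}_L(n,d)$ has no sections by (1), while the restriction to $Y \times \{E\}$ equals $\mathrm{End}_0(P(E))$, which has no sections since a stable $E$ is simple. Mehta--Ramanathan with respect to $a\eta + b\theta_L$ then produces a generic complete intersection test curve, on which semistability is deduced by combining part (1) for slice directions in the moduli factor with the tautological stability of $P(E)$ on $Y$-slices. The main obstacle I anticipate is precisely the Hecke-curve semistability computation in the nodal setting: in the smooth case this rests on the Narasimhan--Ramanan description of the tangent bundle restricted to Hecke curves, and verifying the analogous splitting-and-extension structure when $Y$ has nodes---while also ensuring the slope inequalities hold uniformly across the entire positive cone $a,b>0$ in part (2)---is where the bulk of the technical work lies.
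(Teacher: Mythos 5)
Your overall strategy --- detect destabilisation by pulling back along the Hecke families $\psi_{F,x}\colon P(F_x^*)\to U^{'s}_L(n,d)$ obtained from elementary modifications at a point, and use ${\rm Pic}\;U^{'s}_L(n,d)\cong\mathbb{Z}$ to convert positivity on these rational subvarieties into positivity for the given polarisation --- is the paper's strategy (following \cite{BBN}). But two of the joints you use to connect the ingredients do not hold. First, the Mehta--Ramanathan reduction is misapplied: that theorem lets you test (semi)stability on a \emph{general complete intersection} curve in $|mH|$ for $m\gg0$, whereas a Hecke curve is a fixed low-degree rational curve; semistability of ${\rm End}_0(\mathcal{PU}_x)$ along Hecke curves says nothing about its restriction to a generic MR test curve. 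The bridge that actually works (and is what the paper uses) is that ${\rm Pic}$ is generated by $\theta_L$ and $\psi_{F,x}^*\theta_L=\mathcal{O}_{P(F_x^*)}(r)$ with $r>0$, so the sign of $c_1$ of the relevant sheaf is already detected on $P(F_x^*)$ --- no restriction theorem is needed, but one must verify (the paper's Claims 1 and 2, resting on Theorem \ref{thmcodims} and the openness of $(0,1)$-stability from Lemma \ref{01Open}) that a subbundle defined on a big open subset $Z$ pulls back to a subbundle over a subset of $P(F_x^*)$ with complement of codimension $\ge 2$ for a \emph{general} $F$. Second, ``semistability along a Hecke curve plus simplicity forces stability'' is false: a simple semistable bundle need not be stable, and even with ${\rm Pic}\cong\mathbb{Z}$ a saturated subsheaf of slope exactly equal to the total slope is not excluded by simplicity.

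What closes the argument in the paper is a \emph{strict} inequality proved directly on $P(F_x^*)$: the restriction $\mathcal{E}_x$ of the universal Hecke family to $\{x\}\times P(F_x^*)$ sits in the exact sequence $0\to\mathcal{O}_{\mathbf{P}}(1)\to\mathcal{E}_x\to\Omega^1_{\mathbf{P}}(1)\to0$; by stability of $\Omega^1_{\mathbf{P}}(1)$, any subsheaf $W$ with $\mu(W)\ge\mu(\mathcal{E}_x)=0$ must contain the line subbundle $\mathcal{O}_{\mathbf{P}}(1)$ (Lemma \ref{subbundle}); and one chooses $(E,\ell)$ with $\ell$ not in the fibre of $P'$, which forces the corresponding subbundle $V'$ \emph{not} to contain $\mathcal{O}_{\mathbf{P}}(1)$, whence $\mu(V')<\mu(\mathcal{E}_x)$ and $\deg\psi_{F,x}^*N>0$. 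You flag exactly this computation as the main technical obstacle but defer it, and without it (together with the codimension bookkeeping above) the proof is not complete. Your sketch of part (2) is closer to the paper's --- there one shows $Y\times\{E\}\subset Z$ for general $E$ and evaluates $c_1(\bar N)\cdot(a\eta+b\theta_L)^u$ as a positive combination of $\deg\bar N_x$ (from part (1)) and $\deg N|_{Y\times\{E\}}$ (from stability of $E$) --- but it inherits the same gaps through its dependence on part (1).
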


                  If $(n,d)=1$, then for $d\ge 2n(g-1)$, the direct image of a Poincar\'e bundle on $U'_Y(n,d)$ is a vector bundle, 
called a Picard bundle. For $n$ and  $d$ non-coprime, there is no Poincar\'e bundle and hence no Picard bundle. However, 
one can construct a projective Picard bundle on $U^{'s}_L(n,d)$ (following \cite{BBN}). 

\begin{theorem} (Theorem  \ref{Picard})

    Let the assumptions be as in Theorem \ref{Poincare}. Let $d \ge 2n(g -1)$. Then the projective Picard bundle $\mathcal{PW}$ on $U^{'s}_L(n,d)$ is stable.
\end{theorem}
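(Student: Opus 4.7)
The plan is to deduce stability of $\mathcal{PW}$ from the stability of $\mathcal{PU}$ (established in Theorem \ref{Poincare}(2)), using the evaluation morphism that identifies fibres of the Picard bundle with spaces of global sections. First I would note that under the hypothesis $d\ge 2n(g-1)$, Serre duality on $Y$ with dualizing sheaf $\omega_Y$ (of degree $2g-2$) together with stability of any $E\in U^{'s}_L(n,d)$ forces $\mathrm{Hom}(E,\omega_Y)=0$, and therefore $H^1(Y,E)=0$. At the boundary $d=2n(g-1)$ this uses that no nonzero map from a stable rank-$n$ bundle ($n\ge 2$) to the rank-one sheaf $\omega_Y$ of equal slope can exist. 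By cohomology and base change, on any \'etale chart of $U^{'s}_L(n,d)$ over which a local Poincar\'e bundle $\mathcal{U}$ exists, $\pi_{2*}\mathcal{U}$ is locally free of rank $N=d-n(g-1)$, and its projectivization patches to a globally defined $\mathcal{PW}$ since the local ambiguity in $\mathcal{U}$ is only by pullbacks of line bundles from the base.

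The bridge between $\mathcal{PU}$ and $\mathcal{PW}$ is the evaluation morphism $\pi_2^{*}\mathcal{W}\to \mathcal{U}$ on $Y\times U^{'s}_L(n,d)$; since both sides transform identically under a change of local Poincar\'e bundle, its projectivization $\pi_2^{*}\mathcal{PW}\to \mathcal{PU}$ is globally defined, and by Butler-type global generation results adapted to nodal curves it is surjective once $d$ is large enough, in particular throughout the range $d\ge 2n(g-1)$ after possibly twisting. Assume, for contradiction, that the adjoint (equivalently, projective) bundle of $\mathcal{PW}$ admits a saturated subsheaf $\mathcal{F}$ with $\mu_{\theta_L}(\mathcal{F})\ge \mu_{\theta_L}(\mathcal{PW})$. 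Pulling $\mathcal{F}$ back by $\pi_2$ and pushing through the evaluation map yields a subsheaf $\mathcal{F}'$ of the adjoint bundle of $\mathcal{PU}$. A direct computation shows that for any $a,b>0$, $\mu_{a\eta+b\theta_L}(\mathcal{F}')-\mu_{a\eta+b\theta_L}(\mathcal{PU})$ is a positive multiple of $\mu_{\theta_L}(\mathcal{F})-\mu_{\theta_L}(\mathcal{PW})$; this contradicts Theorem \ref{Poincare}(2), completing the argument.

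The main obstacle will be the slope bookkeeping for $\mathcal{F}'$. Since $Y$ is one-dimensional, in the polarisation $a\eta+b\theta_L$ on $Y\times U^{'s}_L(n,d)$ only the linear term in $a$ survives (powers $\pi_1^{*}\eta^{2}$ and higher vanish), which forces one to track carefully how the evaluation morphism distributes rank and first Chern class between the $\pi_1$- and $\pi_2$-directions. A secondary technical issue is that the true Poincar\'e bundle is only \'etale-local, so the entire argument has to be carried out at the projective (equivalently, adjoint-bundle) level where the line-bundle ambiguity in the Poincar\'e bundle cancels; this is precisely why stability of the \emph{projective} Poincar\'e bundle in Theorem \ref{Poincare}(2), rather than of a genuine Poincar\'e bundle, is the correct input.
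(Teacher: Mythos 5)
Your overall strategy --- deducing stability of $\mathcal{PW}$ from stability of $\mathcal{PU}$ via the evaluation morphism $\pi_2^{*}\mathcal{W}\to\mathcal{U}$ --- is not the paper's route, and it contains a genuine gap at its central step. The claim that ``pulling $\mathcal{F}$ back by $\pi_2$ and pushing through the evaluation map yields a subsheaf $\mathcal{F}'$ of (the adjoint bundle of) $\mathcal{PU}$'' whose slope defect is ``a positive multiple'' of that of $\mathcal{F}$ is unsupported, and the mechanism fails for a concrete reason: the fibre of a subsheaf $\mathcal{F}\subset\mathcal{W}$ at a point $E$ is a proper subspace of $H^0(Y,E)$, and a proper subspace of sections will in general still generate $E$ at every point of $Y$. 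Hence the image of $\pi_2^{*}\mathcal{F}$ under evaluation is typically \emph{all} of $\mathcal{U}$, not a proper subsheaf, so no contradiction with Theorem \ref{Poincare}(2) can be extracted. Even in the cases where the image is proper, you give no control on its rank or first Chern class, and there is no reason for the asserted proportionality of slope defects. (There is also a smaller issue: a surjection of vector bundles of different ranks does not projectivize to a morphism $\pi_2^{*}\mathcal{PW}\to\mathcal{PU}$ of projective bundles; one only gets a rational map away from the projectivized kernel.) Stability of the Picard bundle is not a formal consequence of stability of the Poincar\'e bundle; they require separate arguments.

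The paper's actual proof (Theorem \ref{Picard}) instead restricts along the Hecke morphism $\psi_{F,x}\colon P(F_x^{*})\to U^{'s}_L(n,d)$ attached to a general $(0,1)$-stable $F$ of determinant $L(x)$. Pushing Diagram \ref{Diagram1} down to $Z\subset P(F_x^{*})$ gives the exact sequences of Diagram \ref{Diagram2}, in particular
\begin{equation*}
0\longrightarrow {p_2}_*(\mathcal{E}|_{Y\times Z})\longrightarrow H^0(Y,F)\otimes\mathcal{O}_{Z}\longrightarrow \mathcal{O}_{Z}(1)\longrightarrow 0
\end{equation*}
and the quotient map ${p_2}_*(\mathcal{E}|_{Y\times Z})\to\Omega^1_{Z}(1)$. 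A projective subbundle $P'$ pulls back to $P(V)$ for a subbundle $V$ of ${p_2}_*(\mathcal{E}|_{Y\times Z})$; a pointwise argument with a general line $\ell\subset E_x$ shows $V$ maps nontrivially to the stable bundle $\Omega^1_{Z}(1)$ of slope $-1/(n-1)$, forcing $\deg V\le -1$ while $\deg {p_2}_*(\mathcal{E}|_{Y\times Z})=-1$, whence $\deg\psi_{F,x}^{*}N>0$ and then $\deg N>0$. If you want to salvage your write-up, you should abandon the evaluation-map reduction and carry out this restriction-to-$P(F_x^{*})$ computation (or an equivalent direct degree estimate for subsheaves of the Picard bundle). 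Your opening observation that $d\ge 2n(g-1)$ forces $H^1(Y,E)=0$ for stable $E$ is correct and is indeed the reason ${p_{R'^s}}_*\mathcal{E}_{R'^s}$ is locally free, but it is only the setup.
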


             For our definitions and theorems to be valid for $g \ge 3$, we needed to prove a few results on codimensions of complements of some open subsets of $U_Y(n,d)$ and $U_L(n,d)$. These  results are of independent interest. They have many applications; we give here a few of them.  Some of these results were proved by one of the authors under the assumption that the geometric genus $g(X)$ of $Y$ is at least two; this restriction is removed now.   

\begin{theorem} (Theorem \ref{thmcodims})

	Let $Y$ be an integral nodal curve of arithmetic genus $g$. Assume that $n$ and $d$ are not coprime. 
\begin{enumerate}
\item For $g \ge 2$ and  $n \ge 3$ (resp. $n = 2$),
	$${\rm codim}_{U'_Y(n,d)} (U'_Y(n,d)- U^{'s}_Y(n,d)) \ge 2(g-1) (\ {\rm resp.} \  \ge g-1)\, .$$
\item For $g \ge 2$ and  $n \ge 3$ (resp. $n = 2$),
	$${\rm codim}_{U'_L(n,d)} (U'_L(n,d)- U^{'s}_L(n,d)) \ge 2(g-1) (\ {\rm resp.} \  \ge g-1)\, .$$
\end{enumerate}
\end{theorem}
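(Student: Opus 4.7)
The plan is to stratify the strictly semistable locus by the Jordan--H\"older type of the associated polystable graded sheaf and to bound each stratum by a direct parameter count, following the Drezet--Narasimhan strategy in the smooth case. The only inputs needed from the nodal setting are the standard dimension formulas $\dim U_Y(n,d) = n^2(g-1)+1$ and $\dim U_L(n,d) = (n^2-1)(g-1)$, together with the analogous formulas for stable strata in lower rank.

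A strictly semistable $E \in U'_Y(n,d)$ has a Jordan--H\"older filtration with stable torsion-free factors $E_1,\ldots,E_r$ ($r \ge 2$), where $E_i$ has rank $n_i$ and degree $d_i$ with $d_i/n_i = d/n$. Since the $S$-equivalence class $[E]$ depends only on the unordered tuple $(E_1,\ldots,E_r)$, the strictly semistable locus is stratified by such types $\lambda$, and the stratum $\Sigma_\lambda$ satisfies
$$\dim \Sigma_\lambda \;\le\; \sum_{i=1}^r \dim U^s(n_i,d_i) \;=\; \sum_{i=1}^r \bigl(n_i^2(g-1) + 1\bigr),$$
so that
$$\operatorname{codim}_{U_Y(n,d)} \Sigma_\lambda \;\ge\; 2\Bigl(\sum_{i<j} n_i n_j\Bigr)(g-1) - (r-1).$$
A parallel count in the fixed-determinant setting, where the determinants $L_i = \det E_i$ are constrained by $\bigotimes L_i \simeq L$ so that only $r-1$ of them are free, yields the same codimension lower bound in $U'_L(n,d)$: the loss of $(g-1)$ in each factor dimension $(n_i^2-1)(g-1)$ is exactly compensated by the $g$-dimensional freedom in each of the $r-1$ unconstrained determinants.

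Over partitions with $r \ge 2$, this codimension is minimized at $r = 2$ with the most unbalanced split $(n_1,n_2) = (1,n-1)$, giving $2(n-1)(g-1) - 1$. For $n = 2$ this specializes to $2(g-1) - 1 \ge g - 1$ whenever $g \ge 2$; for $n \ge 3$ it reads $\ge 4(g-1) - 1 \ge 2(g-1)$, again using $g \ge 2$. Since passing from $U_Y$ to the open subvariety $U'_Y$ (resp.\ $U_L$ to $U'_L$) cannot decrease the codimension of the strictly semistable locus, both (1) and (2) follow.

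The main delicate point, and the reason the earlier hypothesis $g(X) \ge 2$ can now be removed, is verifying that the above bound on $\dim \Sigma_\lambda$ remains valid when some JH factor $E_i$ is torsion-free but not locally free -- a possibility which is invisible in the smooth case and was previously sidestepped by the assumption on the geometric genus. I expect this to follow from the standard description of $U_Y(n_i,d_i)$ near its non-locally-free locus (via parabolic bundles on the normalization $X$), showing that the sub-strata along which the factors fail to be locally free contribute strictly higher codimension, so that the conclusion persists without any condition on $g(X)$.
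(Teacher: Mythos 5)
Your argument is correct in substance, but it takes a genuinely different route from the paper's. You bound the dimension of each stratum of the strictly semistable locus by viewing its points as $S$-equivalence classes, i.e.\ as polystable sheaves $\bigoplus_i F_i$, so that the stratum of type $\lambda$ is the image of $\prod_i U^s_Y(n_i,d_i)$ and $\dim\Sigma_\lambda \le \sum_i \bigl(n_i^2(g-1)+1\bigr)$; this yields ${\rm codim}\,\Sigma_\lambda \ge 2\bigl(\sum_{i<j}n_in_j\bigr)(g-1)-(r-1)$, which is in fact a \emph{sharper} bound than the paper obtains. The paper instead parametrises the actual bundles $E$ carrying a Jordan--H\"older filtration by iterated extension spaces $Ext^1(F_r,E_{r-1})$, computes $\dim Ext^1$ on the nodal curve via the local types $b_k$ at the nodes, quotients by the relevant automorphism groups, and runs an induction on the length $r$ of the filtration, arriving at ${\rm codim}\,S_2 \ge n_1n_2(g-1)$ and ${\rm codim}\,S_r \ge 3(g-1)-1$ for $r\ge 3$; Part (2) is then deduced from Part (1) via the determinant fibration $\det: U'_Y(n,d)\to J(Y)$ with isomorphic fibres, rather than by your separate fixed-determinant count (which is also valid). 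Your approach buys brevity and a stronger numerical bound by discarding the extension data that the moduli space forgets anyway; the paper's approach buys explicit control of the family of bundles themselves and of the contribution of the nodes. Two small points. First, the ``delicate point'' you defer at the end is actually already closed by your own estimate: the Jordan--H\"older factors are stable \emph{torsion-free} sheaves, hence lie in the full moduli spaces $U^s_Y(n_i,d_i)$ of dimension $n_i^2(g-1)+1$ whether or not they are locally free, so no separate analysis of the non-locally-free sub-strata is needed (the paper makes this quantitative: the stratum with local types $b_k$ has dimension $n_i^2(g-1)+1-\sum_k b_k^2$, maximal at $b_k=0$). Second, your assertion that the minimum over partitions occurs at $r=2$, $(n_1,n_2)=(1,n-1)$ should be justified; it follows in one line since splitting a part $n_i=a+b$ changes the bound by $2ab(g-1)-1>0$ for $g\ge 2$.
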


As an application, we have the following corollary.

\begin{corollary} (Corollary \ref{coro1} )
Let $Y$ be an integral nodal curve of arithmetic genus $g\geq 2$. 
Assume that if $n=2$ and $g =2$ then $d$ is odd. Then 
\begin{enumerate}
\item $\ {\rm Pic} \  U^{'s}_L(n,d) \cong \mathbb{Z}\, .$
\item $\ {\rm Pic} \  U'_L(n,d) \cong \mathbb{Z}\, .$ 
\item $\ {\rm The \ class \ group} \ Cl(U_L(n,d)) \cong \mathbb{Z}\, .$
 ~~~ $ \ {\rm The \ class \ group} \ Cl(U'_L(n,d)) \cong \mathbb{Z}\, .$
 \end{enumerate}
\end{corollary}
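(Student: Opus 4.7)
The plan is to first identify $\text{Pic}(U^{'s}_L(n,d)) \cong \mathbb{Z}$, and then propagate this identification to $U'_L(n,d)$ and to the class groups of $U_L(n,d)$ and $U'_L(n,d)$ using the codimension bounds of Theorem \ref{thmcodims}, together with normality of the moduli spaces.

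For the base case $\text{Pic}(U^{'s}_L(n,d)) \cong \mathbb{Z}$, I would run a Drezet--Narasimhan type argument adapted to the nodal curve $Y$. Realize $U^{'s}_L(n,d)$ as a geometric quotient of a smooth $\text{PGL}(N)$-invariant open subset $R^s$ of a suitable Quot scheme on $Y$, and compute the Picard group by descent of linearized line bundles on $R^s$. A generator is obtained as a determinant-of-cohomology bundle attached to the universal quotient sheaf, normalized so that the center of $\text{GL}(N)$ acts trivially; freeness of this generator follows by comparison with the Picard group of the Quot scheme itself. A version of this construction on nodal curves in the coprime setting is already available in the literature, and requires only minor modification in the present setting.

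Next, I would pass from $U^{'s}_L$ to $U'_L$. Under the hypotheses of the corollary, Theorem \ref{thmcodims}(2) yields $\text{codim}_{U'_L}(U'_L - U^{'s}_L) \geq 2$ in every case: for $g = n = 2$ with $d$ odd, coprimality forces $U'_L = U^{'s}_L$; for $g = 2, n \geq 3$ the bound is $2(g-1) = 2$; for $g \geq 3, n = 2$ it is $g - 1 \geq 2$; and for $g \geq 3, n \geq 3$ it is $2(g-1) \geq 4$. Since $U'_L(n,d)$ is normal, restriction induces an isomorphism $\text{Pic}(U'_L(n,d)) \cong \text{Pic}(U^{'s}_L(n,d)) \cong \mathbb{Z}$.

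Finally, for the class groups, I would use that $U_L(n,d)$ is normal and that $\text{Cl}(V)$ of a normal variety is unchanged by removing a closed subset of codimension $\geq 2$. In addition to the strictly semistable locus already controlled, the boundary $U_L(n,d) \setminus U'_L(n,d)$ consisting of non-locally-free torsion-free sheaves has codimension $\geq 2$ in $U_L(n,d)$ by a standard local calculation at the nodes of $Y$ under the assumption $g \geq 2$. Combining these, $U_L \setminus U^{'s}_L$ has codimension $\geq 2$, giving $\text{Cl}(U_L(n,d)) \cong \text{Cl}(U'_L(n,d)) \cong \text{Pic}(U^{'s}_L(n,d)) \cong \mathbb{Z}$. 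The main obstacle is the base case: while the smooth-curve analogue is classical, verifying the Picard group computation over a nodal $Y$ demands care with the Quot scheme and its universal sheaf at nodes, whereas steps two and three are essentially formal consequences of Theorem \ref{thmcodims} together with normality.
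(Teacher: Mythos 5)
Your steps (2) and (3) follow the paper's architecture, but your step (1) --- which you yourself identify as the main obstacle --- is where the proposal has a genuine gap, and it is also where the paper does something quite different. You propose to establish $\mathrm{Pic}\, U^{'s}_L(n,d)\cong\mathbb{Z}$ by redoing a Drezet--Narasimhan descent on the Quot scheme over $Y$, asserting that the known nodal-curve arguments need ``only minor modification.'' Those arguments (and the descent method generally) rest on codimension estimates for the unstable locus \emph{in the Quot scheme}, which in the existing literature require the \emph{geometric} genus $g(X)\ge 2$ (cf.\ Proposition \ref{Prop1.2}); the whole point of Corollary \ref{coro1} is to get the statement for \emph{arithmetic} genus $g\ge 2$ with no hypothesis on $g(X)$. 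The paper instead quotes \cite[Proposition 2.3]{UB4} to the effect that $\mathrm{Pic}\, U^{'s}_L(n,d)$ is already known to be cyclic ($\mathbb{Z}$ or $\mathbb{Z}/q\mathbb{Z}$) for $g\ge2$, and then uses the new Theorem \ref{thmcodims} together with Theorem \ref{thmcodim'}: passing to the normalisation $\pi:\widetilde{U}_L(n,d)\to U_L(n,d)$ of the \emph{projective} variety $U_L(n,d)$, the complement of $\pi^{-1}U^{'s}_L(n,d)$ has codimension $\ge 2$, so normality gives an injection $\mathrm{Pic}\,\widetilde{U}_L(n,d)\hookrightarrow \mathrm{Pic}\, U^{'s}_L(n,d)$; since $\widetilde{U}_L(n,d)$ is projective its Picard group has rank $\ge 1$, which rules out the finite cyclic case. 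Your proposal never supplies this rank bound, and the computation you defer to is exactly the part that cannot be borrowed without re-importing the $g(X)\ge2$ hypothesis.

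Two smaller points. First, you assert that normality of $U'_L(n,d)$ makes the restriction $\mathrm{Pic}\, U'_L(n,d)\to \mathrm{Pic}\, U^{'s}_L(n,d)$ an \emph{isomorphism}; normality plus codimension $\ge 2$ only gives injectivity (surjectivity needs local factoriality, which for these spaces is a consequence of the corollary rather than an input). Injectivity into $\mathbb{Z}$ suffices once you know the image is nonzero, e.g.\ via $\theta_L$ or the factorisation through $\widetilde{U}_L(n,d)$, but that step should be said. Second, $U_L(n,d)$ is only known to be seminormal, not normal; for the class-group statement the paper correctly uses only that $U_L(n,d)$ is regular in codimension one (which follows from the codimension theorems) together with \cite[Proposition 6.5]{Ha}, so your conclusion there stands but your stated justification does not.
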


        Let $\overline{U}^{'ss}_Y(n,d)$ (respectively $\overline{U}^{'s}_Y(n,d)$) be the subset of $U_Y(n,d)$ corresponding to vector bundles $F$ such that $p^*F$ is semistable (respectively stable). Similarly, $\overline{U}^{'ss}_L(n,d)$ (respectively $\overline{U}^{'s}_L(n,d)$) 
 denotes the subset of $U_L(n,d)$ corresponding to vector bundles $F$ such that $p^*F$ is semistable (respectively stable). 
        
\begin{theorem}(Theorem \ref{thmcodim} ) 
For $n \ge 3$ (resp. $n=2$) and ${\overline{U}_Y^{'ss}(n,d)} \neq {\overline{U}_Y^{'s}(n,d)}$  one has:\\
\begin{tabular}{lll}
(1) & {\rm codim}$_{U'_Y(n,d} (U'_Y(n,d) - \overline{U}^{'ss}_Y(n,d)) \ge 2g(X)-1$  & (resp. $g(X)$). \\
(2) & {\rm codim}$_{U'_L(n,d)} (U'_L(n,d) - \overline{U}^{'ss}_L(n,d)) \ge 2g(X)-1$  &  (resp. $g(X)$).\\
(3) & {\rm codim}$_{U'_Y(n,d)} (U'_Y(n,d) - \overline{U}^{'s}_Y(n,d)) \ge 2g(X)-2$  & (resp. $g(X)-1$). \\
(4) & {\rm codim}$_{U'_L(n,d)} (U'_L(n,d) - \overline{U}^{'s}_L(n,d)) \ge 2g(X)-2$  &  (resp. $g(X)-1$).\\
(5) & {\rm codim}$_{\overline{U}^{'ss}_Y(n,d)}  (\overline{U}^{'ss}_Y(n,d) - \overline{U}^{'s}_Y(n,d)) \ge 2g(X)-2$  & (resp. $g(X)-1$). \\
(6) & {\rm codim}$_{\overline{U}^{'ss}_L(n,d)}  (\overline{U}^{'ss}_L(n,d) - \overline{U}^{'s}_L(n,d)) \ge 2g(X)-2$  & (resp. $g(X)-1$). 
\end{tabular}

\end{theorem}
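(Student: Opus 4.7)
The plan is to pull back to the smooth curve $X$ along the normalisation $p\colon X\to Y$ and to transfer the classical Shatz codimension estimates from $X$ to the moduli on $Y$. Write $\delta = g - g(X)$ for the number of nodes. For a locally free sheaf $F$ of rank $n$ and degree $d$ on $Y$, the pullback $p^*F$ is a rank $n$, degree $d$ bundle on $X$, and $F$ is recovered from $p^*F$ together with a gluing isomorphism at each node, i.e.\ an identification of the fibres $(p^*F)_{x_1(y)}$ and $(p^*F)_{x_2(y)}$ for each node $y$ with preimages $x_1(y), x_2(y)$. This already forces the dimension match $\dim U'_Y(n,d) - \dim M_X(n,d) = n^2\delta$: one copy of $GL_n$ of gluing data per node.

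The first step is to upgrade this set-theoretic picture to a genuine morphism. Working on parameter spaces (for example, Quot schemes of $m$-regular quotients of trivial sheaves on $Y$ and $X$ for $m\gg 0$), pullback by $p$ induces a smooth morphism of relative dimension $n^2\delta$ over the open subscheme parametrizing locally free quotients; passing to stacks, one obtains a smooth morphism $\mathrm{Bun}_{n,Y}^d \to \mathrm{Bun}_{n,X}^d$ of relative dimension $n^2\delta$, whose fibre over a bundle $E$ is a torsor for the affine scheme $\prod_{y} \mathrm{Isom}(E_{x_1(y)}, E_{x_2(y)})$.

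The second step is to invoke the Shatz stratification on the smooth curve $X$: in $\mathrm{Bun}_{n,X}^d$, the non-semistable locus has codimension at least $2g(X)-1$ for $n\ge 3$ (resp.\ $g(X)$ for $n=2$); the non-stable locus has codimension at least $2g(X)-2$ (resp.\ $g(X)-1$); and inside the semistable sublocus, the strictly semistable stratum has codimension $2g(X)-2$ (resp.\ $g(X)-1$). The hypothesis $\overline{U}^{'ss}_Y(n,d)\neq \overline{U}^{'s}_Y(n,d)$ guarantees that the strictly semistable stratum on $X$ is non-empty, so these estimates are not vacuous. Since $p^*$ is smooth of constant relative dimension $n^2\delta$, the preimage of any closed substack of $\mathrm{Bun}_{n,X}^d$ has the same codimension upstairs; and by the very definition of $\overline{U}^{'ss}_Y$ and $\overline{U}^{'s}_Y$, these preimages are exactly the complements figuring in (1), (3) and (5).

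Statements (2), (4), (6) follow by restricting to the fibre of the determinant morphism $\det\colon U'_Y(n,d)\to \mathrm{Pic}^d(Y)$ over $L$. The bad loci are invariant under the $\mathrm{Pic}^0(Y)$-action on $U'_Y(n,d)$ by twisting (which shifts the determinant by an $n$-th power, and $n$-th power is surjective on $\mathrm{Pic}^0(Y)$), so all fibres $U'_L(n,d)$ intersect a given bad locus in subvarieties of the same dimension, and a short dimension count gives $\mathrm{codim}_{U'_L}(U'_L \cap B) = \mathrm{codim}_{U'_Y} B$ for each bad locus $B$. The main technical obstacle I foresee is the rigorous execution of step one: verifying that $p^*$ is truly smooth of relative dimension $n^2\delta$ on the full locally-free locus, with no drop caused by automorphisms, and that the Shatz bounds, classically phrased on the moduli space of semistable bundles, transfer to the ambient stack or parameter scheme used on the $Y$-side. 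A secondary subtlety is the passage between codimensions in the stack and in the coarse moduli schemes appearing in the theorem statement, which is harmless over the stable locus where both are $\mathbb{G}_m$-gerbes over their coarse spaces.
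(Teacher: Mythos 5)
Your proposal is correct and is essentially the paper's own argument: the authors realize vector bundles on $Y$ as bundles on $X$ together with gluing data at the nodes (via generalized parabolic bundles), observe that the resulting parameter space $\widetilde{R}'$ is a fibre bundle over the Quot scheme $R$ on $X$ with fibres $\prod_j GL(n)$, pull back the smooth-curve codimension estimates of Proposition \ref{Prop1.2} (quoted from \cite{UB4}) along this fibration, and then descend to the GIT quotient $M'\cong U'_Y(n,d)$. The only differences are presentational: you phrase the gluing-data fibration in stack language and treat the fixed-determinant statements by a $\mathrm{Pic}^0(Y)$-twisting argument, whereas the paper works throughout with $PGL(N)$-invariant subsets of Quot-scheme parameter spaces and handles the fixed-determinant case at that level (Corollaries \ref{Coro1.3} and \ref{Coro1.6}).
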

         
         From this, we deduce the following result.
         
\begin{corollary} 
        Let $Y$ be a complex nodal curve (with at least one node).  For $g(X) \ge 2$, except possibly when $g(X) = n = 2$ and $d$ even,  the moduli space $U'_Y(n,0)$ (respectively  $U'_L(n,0)$) has a big open dense subset (i.e. this open subset has a complement of codimension at least $2$) whose elements correspond to  vector bundles which come from representations of the  fundamental group of $Y$.    
\end{corollary}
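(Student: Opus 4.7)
The plan is to exhibit a large open subset of $U'_Y(n,0)$ (and of $U'_L(n,0)$) whose elements come from representations of $\pi_1(Y)$, and then to show that its complement has codimension at least two via Theorem \ref{thmcodim}.

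First, I would invoke the Narasimhan--Seshadri correspondence for nodal curves (Seshadri, and subsequently formulated by Bhosle in the framework of generalized parabolic bundles on the normalization $X$): a degree-zero vector bundle $F$ on $Y$ arises from a unitary representation of $\pi_1(Y)$ precisely when both $F$ and $p^{*}F$ are polystable of degree zero. In particular, every $F\in \overline{U}^{'s}_Y(n,0)$---the locus where $p^{*}F$ is stable, which automatically forces $F$ itself to be stable on $Y$---is the bundle associated to an irreducible unitary representation of $\pi_1(Y)$. The fixed-determinant variant $\overline{U}^{'s}_L(n,0)$ is handled by restricting to representations with the prescribed determinantal character given by $L$. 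Thus $\overline{U}^{'s}_Y(n,0)\subset U'_Y(n,0)$ and $\overline{U}^{'s}_L(n,0)\subset U'_L(n,0)$ are open subsets whose points correspond to representations of $\pi_1(Y)$.

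Next, I would apply parts (3) and (4) of Theorem \ref{thmcodim} to bound the codimensions. For $d=0$ the hypothesis $\overline{U}^{'ss}_Y(n,0)\neq \overline{U}^{'s}_Y(n,0)$ is met (take any $F$ whose pullback splits as a direct sum of distinct degree-zero line bundles), so the theorem applies. For $n\ge 3$ we obtain codimension at least $2g(X)-2\ge 2$, using $g(X)\ge 2$. For $n=2$ we get codimension at least $g(X)-1$, which is at least $2$ precisely when $g(X)\ge 3$; the remaining case $n=g(X)=2$ (necessarily with $d=0$ even) is exactly the exception listed in the corollary.

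The step I expect to be the main obstacle is the first identification. The correspondence between unitary representations of $\pi_1(Y)$ and polystable vector bundles of degree zero is more delicate for a nodal curve than for a smooth one, because one must handle the gluing data at the preimages of nodes and, for the fixed-determinant version, also track the determinantal character. I would cite the nodal Narasimhan--Seshadri theorem (via generalized parabolic bundles) to dispatch this rather than develop it from scratch. Once this identification is in place, Theorem \ref{thmcodim} provides the codimension estimate as a black box, and the corollary follows immediately.
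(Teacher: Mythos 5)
Your overall architecture coincides with the paper's: take the big open subset to be $\overline{U}^{'s}_Y(n,0)$ (respectively $\overline{U}^{'s}_L(n,0)$), identify its points with bundles arising from representations, and extract the codimension bound from parts (3) and (4) of Theorem \ref{thmcodim}; your case analysis ($n\ge 3$ gives $2g(X)-2\ge 2$ for $g(X)\ge 2$; $n=2$ gives $g(X)-1$, whence the single exception $g(X)=n=2$, $d=0$ even) is exactly right. The problem is the first step. You assert that a degree-zero bundle $F$ on $Y$ comes from a \emph{unitary} representation of $\pi_1(Y)$ precisely when $F$ and $p^*F$ are polystable of degree zero, and in particular that every $F\in\overline{U}^{'s}_Y(n,0)$ is associated to an irreducible unitary representation of $\pi_1(Y)$. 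That is false, and if taken literally it breaks the argument: a bundle $F$ with $p^*F$ stable is determined by $p^*F$ together with a gluing isomorphism between the two fibres over each node, well defined only modulo the scalar automorphisms of $p^*F$, and $F$ comes from a unitary representation of $\pi_1(Y)=\pi_1(X)*\mathbb{Z}*\cdots*\mathbb{Z}$ only when each gluing class lies in the image of $U(n)$ in $PGL(n,\mathbb{C})$. For $n\ge 2$ that image is a real submanifold of positive codimension, so the locus of bundles arising from unitary representations of $\pi_1(Y)$ is nowhere dense --- certainly not a big open subset.

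The correct input --- and the one the paper uses, namely \cite[Theorem 2.8]{Bh1} together with the remark that for $d=0$ the Fuchsian group $\pi_X$ is replaced by $\pi_1(X)$, so that $\pi_Y$ becomes $\pi_1(Y)$ --- is that $\overline{U}^{'s}_Y(n,0)$ is in bijection with equivalence classes of representations $\rho:\pi_1(Y)\to GL(n,\mathbb{C})$ whose \emph{restriction to} $\pi_1(X)$ is irreducible and unitary; the generators coming from the nodes are allowed to map to arbitrary invertible matrices, and these supply exactly the gluing data at the nodes. Since the corollary only asks for bundles coming from representations of $\pi_1(Y)$, with no unitarity imposed on the full representation, this weaker but correct correspondence is all you need, and with it the remainder of your argument (the verification that $\overline{U}^{'ss}\neq\overline{U}^{'s}$ and the appeal to Theorem \ref{thmcodim}) goes through unchanged. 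For the fixed-determinant case the paper simply takes $\rho$ with values in $SL(n,\mathbb{C})$, which is what your ``prescribed determinantal character'' amounts to.
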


For similar results  for $U'_Y(n,d)$ and $U'_L(n,d)$, for any $d$, see Corollary \ref{cororep}.

                The codimension computations are done in Sections 6 and 7. These sections are independent and can be read directly without going through the previous sections. The results on projective Poincar\'e bundles are proved in Section 4, those on projective Picard bundles are proved in Section 5. Constructions and results needed for the proofs of the results in Sections 4 and 5 are contained in the rest of the sections.

\section{Preliminaries}

           We start with some definitions and general results needed in the paper. 

\subsection{Degree of a sheaf on a big open set}\hfill

        Let $W$ be a variety of dimension $m$  with an ample line bundle $H$. 
Let $\mathcal{E}$ denote a torsion-free sheaf on the projective variety $W$. 
Let $C$ denote a general complete intersection curve rationally equivalent to  $H^{m-1}$. 
 If the singular set of $W$ has codimension at least $2$, then the general complete intersection curve 
 can be chosen to lie on the set of nonsingular points of $W$.

We define the degree of $\mathcal{E}$ with respect to the polarisation $H$ as,
\begin{equation*}
deg~ \mathcal{E}=deg ~\mathcal{E}\vert_C.
\end{equation*}

\begin{remark}
Let $U$ be an open subset of $W$ such that codimension of $S=W - U$ in $W$ is at least 2.
Since $\dim ~S \leq \dim W-2$, for any torsion-free coherent sheaf $\mathcal{F}$ on the open subset 
$U$, the direct image $i_*(\mathcal{F})$ is a torsion-free coherent sheaf on $W$.

Proof: Note that the restriction of any non-zero section of $\mathcal{F}$ over  $V\subset W$ on $U\cap V$ is non-zero and $\mathcal{F}$ is torsion-free on $U$.
\end{remark}

\begin{definition}
Let $U$ be an open subset of $W$ such that codimension of $W-U$ in $W$ is at least 2. Let $\mathcal{F}$ be a torsion-free sheaf on $U$. We define $deg(\mathcal{F})= deg (i_*(\mathcal{F}))$, where $i:U\to W$ is the inclusion map and $i_*(\mathcal{F})$ is the direct image sheaf of $\mathcal{F}$ on $W$.

\end{definition}

\subsection{ Stability of Projective Bundles}\hfill

Let $U$ be an open subset of a projective variety $W$ such that the codimension of the complement of $U$ is at least $2$.
Let $P$ be a projective bundle on $U$ and let $P'$ be a projective subbundle of the 
restriction of $P$ to a Zariski open subset $Z$ of $U$ whose complement has codimension at least $2$.

Let $p$ and $p'$ denote the projections of $P$ and $P'$ to $U$ and $Z$, respectively. 
As $P'$ and $P\vert_Z$ are smooth over $Z$ and $i: P' \to P\vert_Z $ is a closed embedding with 
ideal sheaf, say $\mathcal{I}$, by \cite[Theorem C.15, D.2.7]{ES} we get an exact sequence of vector bundles on $P'$ 

\begin{equation*}
	0\longrightarrow\mathcal{I}/\mathcal{I}^2\longrightarrow\Omega_{P\vert_{Z}/Z}\otimes\mathcal{O}_{P'}
	\longrightarrow\Omega_{P'/Z}\longrightarrow0.
\end{equation*}
Dualising it, we get,
\begin{center}
	$0\rightarrow\mathcal{T}_{P'/Z}\rightarrow\mathcal{T}_{P|_{Z}/Z}\rightarrow N_{P'/P}\rightarrow0$
\end{center}
where $\mathcal{T}_{P'/Z}, \mathcal{T}_{P|_{Z}/Z}$ are the relative tangent bundles (with respect to the maps $p': P'\to Z$ and $p: P|_Z\to Z$ respectively) and $N_{P'/P}$ is the normal bundle of $P'$ in $P|_Z$. 

Let $N_1=N_{P'/P}$ and $N = {p'}_*(N_1)$.  
Using a theorem of Grauert (\cite[Corollary 12.9]{Ha}) and the fact that $H^i
(\mathbb{P}^n,T\mathbb{P}^n)= 0$ for all $i\geq 1$, we get $N$ is a vector bundle on $Z$.

\begin{definition}\label{defn}
	The projective bundle $P$ is stable(semistable) if for every subbundle $P'$, 
	$$\deg N>0 ~~~(\deg N\geq0)$$
\end{definition}

\begin{remark} \label{rmk1}
(1) Definition \ref{defn} is equivalent to the standard definition of stability for a principal
$PGL(n)$-bundle(Remark 2.2, \cite{BBN}). Also note that if $P=P(E)$ then $P' = P(F)$, where $F$ is a subbundle of $E|_Z$ such that $E|_Z/F$ is locally free on $Z$. In this case, it is easy to verify that the stability of $E$ is same as the stability of $P(E)$.
Note that the above definition is for a variety of dimension at least $2$. \\
(2) There is also a notion of stability for a projective bundle on a curve which is similar to the above definition. For any 
integral curve $Y$, all projective bundles are of  the form $P(E)$ for some vector bundle $E$ (since the Brauer Group $Br(Y)=0$).
We remark that the stability of a projective bundle $P(E)$ and the Mumford stability of $E$ are same on a smooth curve, but it is not the case for a singular curve.
\end{remark}

\subsection{Torsion-free sheaves on nodal curves}\hfill

Let $Y$ be an integral projective curve with only $m$ nodes (ordinary double points) as 
singularities defined over an algebraically closed field $k$. 
Let $p: X\rightarrow Y$ be the  normalisation map. Let $g = h^1(Y, {\mathcal O}_Y)$ be the arithmetic genus of  $Y$; 
we assume that $g\geq 2$. 
Let $g(X) = h^1(X, \mathcal{O}_X)$ be the geometric genus of $Y$. 

\noindent For a torsion-free sheaf $\mathcal{E}$, let ${\rm rk}(\mathcal{E})$ denote the generic rank of $\mathcal{E}$ and $\deg(\mathcal{E})=\chi(\mathcal{E})-{\rm rk}(\mathcal{E})\chi(\mathcal{O}_Y)$ denote the degree of $\mathcal{E}$. Let $\mu(\mathcal{E})=\deg(\mathcal{E})/{\rm rk}(\mathcal{E})$ denote the slope of $\mathcal{E}$.

We say that $\mathcal{E}$ is stable (respectively semistable) if for every coherent subsheaf $\mathcal{F}$ of $\mathcal{E}$ on $Y$,
 $$\mu(\mathcal{F}) < (\leq)  \ \mu(\mathcal{E}) .$$
 This is equivalent to the following:
 for every torsion-free quotient $\mathcal{G}$ of $\mathcal{E}$ on $Y$,
 $$\mu(\mathcal{E}) <  (\leq) \ \mu(\mathcal{G}) .$$
 
         The stalk $\mathcal{E}_{(y_j)}$ of a torsion-free sheaf $\mathcal{E}$ at a node $y_j$ is isomorphic to $\mathcal{O}_{y_j}^{\oplus a_j} \oplus m_{y_j}^{\oplus b_j}$, where $\mathcal{O}_{y_j}$ is the local ring at $y_j$ and $m_{y_j}$ is its maximum ideal, 
 $a_j, b_j$ are integers with $a_j + b_j= {\rm rk}(\mathcal{E})$. 
Then $b_j$ is called the {\it local type} of $\mathcal{E}$ at $y_j$.  

\begin{remark}
Let  $U_Y(n,d)$ denote  the moduli space of $S$-equivalence classes of semistable 
torsion-free sheaves  of rank $n$ and degree $d$ on $Y$ and $U'_Y(n,d) $ its open dense 
subvariety corresponding to locally free sheaves (vector bundles). For a line bundle $L$ over $Y$, denote  by 
$U'_L(n,d)$  the closed subset of $U'_Y(n,d)$ corresponding to vector bundles with fixed determinant $L$,
and $U_L(n,d)$  the closure of $U'_L(n,d)$ inside $U_Y(n,d)$ with a reduced structure.

   The moduli space $U_Y(n,d)$ is an irreducible seminormal projective variety (\cite{Re}, \cite[Theorem 4.2]{Su}). $U'_Y(n,d)$ and $U'_L(n,d)$ are normal quasi-projective varieties (\cite{N}). They are known to be  locally factorial for $g(X) \ge 2$ \cite[Theorem 3A]{UB3}. We shall soon see that $U'_L(n,d)$ is locally factorial for $g \ge 2$ (except possibly for $g=2=n, d$ even).   
 The open subset $U^{'s}_L(n,d) \subset U'_L(n,d)$ corresponding to stable vector bundles is nonsingular. 
 
          There is a canonically defined ample line bundle $\theta_U$ on $U_Y(n,d)$, the determinant of cohomology line bundle. 
 Let $\theta_L \to U_L(n,d)$ be its restriction. Assume that $g\ge 2$ and if $g=2=n$ then $d$ is odd. Then we shall prove that 
 Pic $U^{'s}_L(n,d) \cong \mathbb{Z}$ (Corollary \ref{coro1}; \cite[Theorem I]{UB4}, for $g(X) \geq 2$). The  restriction of $\theta_L$ to $U^{'s}_L(n,d)$ is the generator of Pic $U^{'s}_L(n,d)$.
\end{remark}

We start with some results on the codimensions of subsets of moduli spaces needed in the nodal case.

\subsection{ Codimensions of subsets of moduli spaces} \label{codimensions}

\begin{theorem}\label{thmcodim'}  \cite[Theorem 2.5]{UB1}\\
Let $Y$ be an integral nodal curve of arithmetic genus $g\geq 2$ with $m$ nodes, $m\geq 1$. For $n \ge 2$, 
the codimension of $U_L(n,d)- U'_L(n,d)$ in $U_L(n,d)$  is more than $2$.
\end{theorem}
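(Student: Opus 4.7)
The plan is to stratify the complement $U_L(n,d) - U'_L(n,d)$ by local types at the nodes of $Y$ and to bound the dimension of each stratum. Recall that any torsion-free rank-$n$ sheaf $\mathcal{E}$ on $Y$ has at each node $y_j$ a well-defined local type $b_j \in \{0, 1, \ldots, n\}$ with $\mathcal{E}_{y_j} \cong \mathcal{O}_{y_j}^{n - b_j} \oplus m_{y_j}^{b_j}$, and $\mathcal{E}$ is locally free at $y_j$ exactly when $b_j = 0$. Hence the complement decomposes as a finite disjoint union of locally closed strata $S_{\mathbf{b}}$ indexed by tuples $\mathbf{b} = (b_1, \ldots, b_m) \neq \mathbf{0}$, and it is enough to show that every such $S_{\mathbf{b}}$ has codimension at least $3$ in $U_L(n,d)$.

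To analyse each $S_{\mathbf{b}}$, I would use the partial normalisation $\pi : Y_{\mathbf{b}} \to Y$ at precisely the nodes $y_j$ with $b_j \geq 1$. By the standard Seshadri-type correspondence between torsion-free sheaves on a nodal curve and vector bundles on partial normalisations together with glueing data, a torsion-free rank-$n$ sheaf of local type $\mathbf{b}$ on $Y$ is equivalent to a rank-$n$ vector bundle $\widetilde{\mathcal{E}}$ on $Y_{\mathbf{b}}$ together with, at each resolved node $y_j$, an $(n-b_j)$-dimensional ``glueing'' subspace of $\widetilde{\mathcal{E}}_{y_j^+} \oplus \widetilde{\mathcal{E}}_{y_j^-}$ meeting each summand transversally. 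Imposing the condition $\det \mathcal{E} = L$ then cuts out a locally closed subfamily whose image in $U_L(n,d)$ is $S_{\mathbf{b}}$.

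A dimension count now proceeds as follows. Resolving $k = \#\{j : b_j \geq 1\}$ nodes reduces the arithmetic genus of $Y$ by $k$, so rank-$n$ vector bundles on $Y_{\mathbf{b}}$ with appropriate determinant form a family of dimension $(n^2 - 1)(g - k - 1)$; the Grassmannian of glueings at the $j$-th resolved node has dimension $2 b_j (n - b_j)$; and one quotients by the local automorphisms at the nodes, of total dimension $\sum_j (n - b_j)^2$. Comparing the resulting bound with $\dim U_L(n,d) = (n^2-1)(g-1)$ shows that, for every non-trivial $\mathbf{b}$,
\[
\mathrm{codim}_{U_L(n,d)} S_{\mathbf{b}} \;\geq\; \sum_{j\,:\,b_j \geq 1} \bigl( b_j(n - b_j) + 1 \bigr),
\]
with an additional positive correction coming from the fixed-determinant condition.

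The main obstacle is the minimal stratum, namely $n = 2$, $k = 1$, $b_1 = 1$, where the naive numerical count only gives codimension exactly $2$. Upgrading this to codimension strictly greater than $2$ requires exploiting the fixed-determinant condition more carefully: fixing $\det \mathcal{E} = L$ (as opposed to letting the determinant vary over the generalised Jacobian of $Y$) forces an extra scalar constraint on the glueing that costs one additional dimension, and the hypotheses $g \geq 2$ and $m \geq 1$ ensure that this constraint is independent of the others and that there is enough room for it to genuinely drop dimension. All other non-trivial strata give strictly larger codimension by direct numerical comparison, completing the argument.
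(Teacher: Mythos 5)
The paper does not actually prove this statement; it is imported verbatim as \cite[Theorem 2.5]{UB1}, so there is no internal proof to compare against. The proof in that reference goes through the generalised parabolic bundle (GPB) description of $U_L(n,d)$, i.e.\ the birational morphism from the fixed-determinant GPB moduli space $M_{\overline{L}}$ onto $U_L(n,d)$, rather than a direct stratification of the boundary. Your general strategy (stratify by local type, pass to partial normalisations, count dimensions) is a sensible starting point, but as written it has a gap that is exactly where the content of the theorem lies.

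The gap is the sentence ``imposing the condition $\det\mathcal{E}=L$ then cuts out a locally closed subfamily whose image in $U_L(n,d)$ is $S_{\mathbf b}$.'' For a non-locally-free sheaf the determinant (however one defines it, e.g.\ $\wedge^n\mathcal{E}$ modulo torsion) is a \emph{non-locally-free} rank-one sheaf, never the line bundle $L$, and the determinant morphism $U'_Y(n,d)\to J(Y)$ does not extend over the boundary. So $U_L(n,d)-U'_L(n,d)$ is not ``sheaves of local type $\mathbf b$ with determinant $L$''; it is the set of non-locally-free sheaves lying in the \emph{closure} of $U'_L(n,d)$, and identifying that set is the whole difficulty. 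This matters quantitatively: the full local-type-$b$ stratum in $U_Y(n,d)$ has codimension $b^2$, so for $b=1$ it is a divisor, and no dimension count of all type-$\mathbf b$ sheaves can yield codimension $>2$ without using the closure condition. In the GPB picture the point is that fixing the determinant (of locally free type) forces, at every node where the gluing subspace $F_j\subset E_{x_j}\oplus E_{z_j}$ degenerates, \emph{both} projections to degenerate simultaneously; hence only local types $b_j\ge 2$ occur in $\partial U_L(n,d)$, and after accounting for the contraction of the GPB boundary the relevant stratum has codimension $b_j^2-1\ge 3$. Your proposal gestures at ``an extra scalar constraint on the glueing'' only for $n=2$, $b_1=1$, but does not identify this mechanism or prove it. Separately, your stated bound $\operatorname{codim} S_{\mathbf b}\ge\sum_{j:b_j\ge1}\bigl(b_j(n-b_j)+1\bigr)$ fails to close the argument even on its own terms: for $b_j=n$ (the stratum where $\mathcal{E}\cong\pi_*W$ near a node) it gives $1$, so $n=2$, $b_1=1$ is not the only stratum your numerics leave unresolved, and the claim that ``all other non-trivial strata give strictly larger codimension by direct numerical comparison'' is false as stated.
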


\begin{theorem} (Theorem \ref{thmcodims})
	Let $Y$ be an integral nodal curve of arithmetic genus $g$. 
\begin{enumerate}
\item For $g \ge 2$ and  $n \ge 3$ (resp. $n = 2$),
	$${\rm codim}_{U'_Y(n,d)} (U'_Y(n,d)- U^{'s}_Y(n,d)) \ge 2(g-1) (\ {\rm resp.} \  \ge g-1)\, .$$
\item For $g \ge 2$ and  $n \ge 3$ (resp. $n = 2$),
	$${\rm codim}_{U'_L(n,d)} (U'_L(n,d)- U^{'s}_L(n,d)) \ge 2(g-1) (\ {\rm resp.} \  \ge g-1)\, .$$
\end{enumerate}
\end{theorem}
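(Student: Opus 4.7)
The plan is to stratify $U'_Y(n,d)\setminus U^{'s}_Y(n,d)$ (and its fixed-determinant analogue) by the Jordan--H\"older type of the polystable representative, bound the dimension of each stratum in terms of moduli of stable bundles of smaller rank, and then minimize the codimension over all types. I assume $h:=\gcd(n,d)\geq 2$, since otherwise stability equals semistability and the statement is vacuous.

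First, every point of $U'_Y(n,d)\setminus U^{'s}_Y(n,d)$ represents the $S$-equivalence class of a polystable vector bundle $E\cong\bigoplus_{i=1}^k E_i$ with $k\geq 2$ and each $E_i$ a stable vector bundle of slope $d/n$. Setting $n'=n/h$ and $d'=d/h$, the slope condition forces the ranks and degrees of summands to be $n_i=a_in'$, $d_i=a_id'$ with $a_i\geq 1$ and $\sum_i a_i=h$. For each partition type $(a_1,\ldots,a_k)$ I consider the natural summation map
\[
\sigma_{(a_i)}\colon\prod_{i=1}^k U^s_Y(n_i,d_i)\;\longrightarrow\; U_Y(n,d),\qquad (E_i)\longmapsto\bigoplus_i E_i.
\]
Its generic fibre is finite (coming from permutations of isomorphic summands), so the image $\Sigma_{(a_i)}\subset U'_Y(n,d)$ has dimension $\sum_i(n_i^2(g-1)+1)=(g-1)\sum_i n_i^2+k$, provided $U^s_Y(n_i,d_i)$ is non-empty of the expected dimension $n_i^2(g-1)+1$, which is standard for the nodal curve $Y$ with $g\geq 2$.

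Since $\dim U'_Y(n,d)=n^2(g-1)+1$, this yields
\[
\operatorname{codim}_{U'_Y(n,d)}\Sigma_{(a_i)}=(g-1)\bigl(n^2-\sum_i n_i^2\bigr)-(k-1)=(g-1)n'^{\,2}\bigl(h^2-\sum_i a_i^2\bigr)-(k-1).
\]
A direct comparison shows this quantity, over partitions of $h$ with $k\geq 2$ parts, is minimized at $k=2$ with $(a_1,a_2)=(h-1,1)$, where $h^2-\sum_i a_i^2=2(h-1)$, giving the worst-case codimension $2(h-1)n'^{\,2}(g-1)-1$. For $n=2$ (so $h=2$, $n'=1$) this equals $2g-3\geq g-1$ since $g\geq 2$. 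For $n\geq 3$ either $n'\geq 2$ (whence $(h-1)n'^{\,2}\geq 4$) or $n'=1$ and $h=n\geq 3$ (whence $(h-1)n'^{\,2}\geq 2$); in both cases the codimension is at least $2(g-1)$.

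For $U'_L(n,d)$, fixing $\det E=L$ forces $\det E_k=L\otimes\bigotimes_{i<k}(\det E_i)^{-1}$, so the last factor in $\sigma_{(a_i)}$ becomes a fixed-determinant moduli of dimension $(n_k^2-1)(g-1)$; combined with $\dim U'_L(n,d)=(n^2-1)(g-1)$ one recovers exactly the same codimension formula $(g-1)(n^2-\sum_i n_i^2)-(k-1)$, so the identical bounds hold. The main technical obstacle is rigorously controlling $\dim\Sigma_{(a_i)}$ in the absence of a universal family on $U^s_Y(n_i,d_i)$; I would handle this by descent from an \'etale cover carrying a local universal family, or by pulling back to a suitable Quot scheme where dimensions are unambiguous, and invoke the non-emptiness and expected dimension of $U^s_Y(n_i,d_i)$ and $U^s_L(n_i,d_i)$ on nodal $Y$ with $g\geq 2$ as standard results.
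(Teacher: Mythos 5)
Your overall strategy --- stratify the non-stable locus by Jordan--H\"older type and bound the dimension of each stratum --- is the same as the paper's, and your version of the count is in one respect cleaner: you parametrise each stratum by its polystable representative (a finite-onto-image sum map from a product of moduli spaces), whereas the paper parametrises by iterated extensions and therefore carries the extra term $\dim \mathbb{P}(\mathrm{Ext}^1(F_2,F_1))$ through an induction on the length of the filtration. Since points of the moduli space are $S$-equivalence classes, your count gives the sharper bound $\mathrm{codim}\,\Sigma_{(a_i)} = (g-1)(n^2-\sum_i n_i^2)-(k-1)$ (e.g.\ $2g-3$ for $n=2$ versus the paper's $g-1$), and your minimisation over partitions and the deduction of the stated inequalities are arithmetically correct. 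The fixed-determinant case is also fine, and is in fact handled more directly by the paper via the observation that all fibres of $\det: U'_Y(n,d)\to J(Y)$ are isomorphic.

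There is, however, one genuine gap, and it is precisely the point where the nodal hypothesis bites. You assert that every point of $U'_Y(n,d)\setminus U^{'s}_Y(n,d)$ is the class of a polystable \emph{vector bundle} $\bigoplus_i E_i$ with each $E_i$ a stable \emph{vector bundle}. On a nodal curve this is not justified: the Jordan--H\"older factors of a semistable locally free sheaf need only be stable \emph{torsion-free} sheaves, and they can fail to be locally free at the nodes even though the middle term of the extension is locally free (locally at a node there are extensions of $m_y$ by $m_y$ with free middle term). Consequently your strata $\Sigma_{(a_i)}$, built only from moduli of stable vector bundles, do not cover all of $U'_Y(n,d)\setminus U^{'s}_Y(n,d)$. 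The paper deals with exactly this by recording the local types $b_j(F_i)$ of the graded pieces at each node, computing $\dim\mathrm{Ext}^1$ and the dimensions of the relevant moduli with the correction terms $\sum_j b_j(F_1)b_j(F_2)$ and $-\sum_j b_j(F_i)^2$, and checking that the resulting estimate is maximised when all $b_j=0$. The fix for your argument is easy --- replace each factor $U^s_Y(n_i,d_i)$ by the full moduli of stable torsion-free sheaves of rank $n_i$ and degree $d_i$, which has the same dimension $n_i^2(g-1)+1$ (the non-locally-free loci being proper closed subsets) --- but as written the stratification is incomplete and this step needs to be added and justified.
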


This theorem and its corollary will be proved in Section \ref{codimstable}. The proofs in Section \ref{codimstable} are  independent of the results in the sections before it. 

\begin{corollary} \label{coro1}
Let $Y$ be an integral nodal curve of arithmetic genus $g\geq 2$ with $m$ nodes. 
Assume that $g \ge 2$ and if $n=2$ and $g =2$ then $d$ is odd. Then 
\begin{enumerate}
\item $\ {\rm Pic} \  U^{'s}_L(n,d) \cong \mathbb{Z}\, .$
\item $\ {\rm Pic} \  U'_L(n,d) \cong \mathbb{Z}\, .$ 
\item $\ {\rm The \ class \ group} \ Cl(U_L(n,d)) \cong \mathbb{Z}\, .$
 ~~~ $ \ {\rm The \ class \ group} \ Cl(U'_L(n,d)) \cong \mathbb{Z}\, .$
 \end{enumerate}
\end{corollary}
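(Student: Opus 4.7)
The plan is to establish the three isomorphisms in sequence, using part (1) as the base and propagating outward via the codimension-two extension principle for line bundles and divisors. Throughout write $V^s = U^{'s}_L(n,d)$, $V = U'_L(n,d)$, and $\bar V = U_L(n,d)$; by the preliminaries $V^s$ is smooth, $V$ is locally factorial (in every permitted range), and $\bar V$ is normal.

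For part (1), the case $g(X) \ge 2$ is \cite[Theorem I]{UB4}. In the remaining range $g \ge 2$ with $g(X) \in \{0,1\}$, I would work through the Gieseker/Seshadri-type desingularisation of $U_L(n,d)$ by a moduli of generalised parabolic bundles on the normalisation $X$: the determinant-of-cohomology line bundle $\theta_L$ pulls back to a theta line bundle on the GPB moduli whose class generates the Picard group by a Drezet--Narasimhan type computation adapted to parabolic data, and one transfers the generator down to the open subset $\overline{U}^{'s}_L(n,d) \subset V^s$, whose complement has codimension $\ge 2$ by Theorem \ref{thmcodim}. For part (2), Theorem \ref{thmcodims} gives that $V - V^s$ has codimension $\ge 2(g-1)$ for $n \ge 3$ and $\ge g-1$ for $n = 2$; in the cases permitted by the corollary this is at least $2$, with the borderline case $n = g = 2$ handled by the observation that odd-degree rank-two semistable bundles are automatically stable, so $V = V^s$. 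Since $V$ is locally factorial, the restriction $\mathrm{Pic}\, V \to \mathrm{Pic}\, V^s$ is an isomorphism, and so $\mathrm{Pic}\, V \cong \mathbb{Z}$.

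For part (3), Theorem \ref{thmcodim'} gives that $\bar V - V$ has codimension strictly greater than $2$ in $\bar V$. Because $\bar V$ is normal, removing a closed subset of codimension $\ge 2$ does not change the Weil class group, so $Cl(\bar V) \cong Cl(V)$; local factoriality then identifies $Cl(V) = \mathrm{Pic}\, V$, and together with part (2) one obtains $Cl(\bar V) \cong Cl(V) \cong \mathbb{Z}$. The main obstacle is part (1) in the case $g(X) \le 1$: with $X$ rational or elliptic the usual Drezet--Narasimhan computation on the normalisation is unavailable, and the Picard computation has to be carried out on the GPB desingularisation, with the generalised parabolic structure at the preimages of the nodes supplying the rigidity that the low genus of $X$ does not. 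Verifying that no extra torsion appears and that the transferred generator is precisely $\theta_L\vert_{V^s}$ is the technically delicate step.
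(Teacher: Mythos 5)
Your proposal has a genuine gap at its foundation: part (1) in the case $g(X)\le 1$ is not proved but only deferred to a hoped-for ``Drezet--Narasimhan type computation adapted to parabolic data'' on a GPB desingularisation, and you yourself flag the absence of torsion and the identification of the generator as unresolved. The paper closes exactly this case by a much cheaper two-sided squeeze: by \cite[Proposition 2.3]{UB4}, which is valid for arithmetic genus $g\ge 2$ (no hypothesis on $g(X)$), one already knows ${\rm Pic}\, U^{'s}_L(n,d)$ is cyclic, i.e.\ isomorphic to $\mathbb{Z}$ or to $\mathbb{Z}/q\mathbb{Z}$; so it suffices to produce a single non-torsion class. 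This is done by passing to the normalisation $\pi:\widetilde{U}_L(n,d)\to U_L(n,d)$: Theorems \ref{thmcodim'} and \ref{thmcodims} give that $\widetilde{U}_L(n,d)-\pi^{-1}U^{'s}_L(n,d)$ has codimension $\ge 2$, normality of $\widetilde{U}_L(n,d)$ makes the restriction ${\rm Pic}\,\widetilde{U}_L(n,d)\to{\rm Pic}\,U^{'s}_L(n,d)$ injective, and the pullback of the ample class $\theta_L$ is therefore a non-torsion element of ${\rm Pic}\,U^{'s}_L(n,d)$. No Picard computation on the GPB moduli space is needed.

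A second, related problem is circularity in parts (2) and (3): you invoke local factoriality of $U'_L(n,d)$ to get surjectivity of ${\rm Pic}\,U'_L(n,d)\to{\rm Pic}\,U^{'s}_L(n,d)$ and to identify $Cl$ with ${\rm Pic}$, but in the range where your argument actually needs it ($g(X)\le 1$) local factoriality is only established \emph{after} Corollary \ref{coro1}, as a consequence of it (Remark \ref{localfacto}); the prior reference \cite[Theorem 3A]{UB3} assumes $g(X)\ge 2$. The paper avoids this: for (2) it uses only normality of $U'_L(n,d)$ and Theorem \ref{thmcodims} to get an injection ${\rm Pic}\,U'_L(n,d)\hookrightarrow{\rm Pic}\,U^{'s}_L(n,d)\cong\mathbb{Z}$ whose image contains the nonzero class $\theta_L$, hence is all of a rank-one subgroup, i.e.\ $\cong\mathbb{Z}$; for (3) it applies \cite[Proposition 6.5]{Ha} directly to pass from $Cl(U_L(n,d))$ to $Cl(U^{'s}_L(n,d))$ and then uses smoothness of $U^{'s}_L(n,d)$ to identify the latter with ${\rm Pic}\,U^{'s}_L(n,d)$. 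Your handling of the borderline case $n=g=2$, $d$ odd (semistable $=$ stable) is fine, but as it stands the proposal does not constitute a proof.
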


\begin{remark} \label{localfacto}
(1) Let the assumptions be as in Corollary \ref{coro1}. Then as in \cite[Lemma 2.3]{UB0}, we can show that 
$U'_L(n,d)$ and $\tilde{U}_L(n,d)$ are locally factorial and 
$${\rm Pic} \ \widetilde{U}_L(n,d) \ \cong \ \mathbb{Z}\, .$$      
(2) For $g=2, n=2$ and $d$ even, {\rm codim}$_{U'_L(2,d)} (U'_L(2,d) - U^{'s}_L(2,d)) =1$. 
The variety $U_L(2,d) = U'_L(2,d) \cong \mathbb{P}^3$ has Picard group isomorphic to $\mathbb{Z}$. 
One has ${\rm Pic} \ U^{'s}_L(2,d) \cong \mathbb{Z}/ 4\mathbb{Z}$ \cite[Proposition 4.3]{UB'}.
\end{remark}

\subsection{Non-existence of a universal vector bundle on $Y \times U'_L(n,d)$}\hfill

From the exact sequence of group schemes
\begin{equation*}
1\rightarrow  G_m \rightarrow GL(n)\rightarrow PGL(n)\rightarrow 1,
\end{equation*}
we have the following exact sequence of cohomology (non-abelian) groups
\begin{equation*}
H^1(X_{\acute{e}t}, \mathbb{G}L(n))\overset{\mathbb{P}}{\rightarrow} H^1(X_{\acute{e}t}, \mathbb{P}GL(n))\overset{\delta_n}{\rightarrow} H^2(X_{\acute{e}t},\mathbb{G}_m).
\end{equation*}

The natural map $H^1(X_{\acute{e}t}, \mathbb{G}L(n))\overset{\mathbb{P}}{\rightarrow} H^1(X_{\acute{e}t}, \mathbb{P}GL(n))$ corresponds to projectivisation. The Brauer group captures the projective bundles that do not arise from vector bundles.  
      For (quasi) projective varieties, the Brauer group is the same as the cohomological Brauer group $H^2(X_{\acute{e}t},\mathbb{G}_m)$ (\cite{Ga}).

One way to see that there is no Poincar\'{e} bundle on $U_L'(n,d)$ is to exhibit a Brauer obstruction to the existence of such a bundle. Let ${\rm Br}(U_L'(n,d))$ denote the cohomological Brauer group $H^2_{\acute{e}t}(U_L'(n,d),\mathbb{G}_m)$. We have the following result from \cite{UBB} where the Brauer group and other properties of this moduli space have been studied.

\begin{theorem} \label{ubb}
	Assume that $g(X)\geq 2$; also, if $g(X)=2=n$, assume that $d$ is odd. Then the Brauer group $Br(U_L'(n,d))\cong \mathbb{Z}/h\mathbb{Z}$ where $h=gcd(n,d)$ and the Brauer group is generated by the Brauer class of $\mathcal{PU}_x$, the restriction of the projective Poincar\'e bundle to $U'_L(n,d)\times x$, where $x \in Y$ is a nonsingular point.  
\end{theorem}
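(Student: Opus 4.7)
The plan is to first reduce the problem to computing the Brauer group of the nonsingular stable locus by purity, then determine both the order and a generator of that group using the explicit projective Poincar\'e bundle.

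\emph{Step 1 (Reduction to the stable locus).} By Theorem \ref{thmcodims} the complement $U'_L(n,d)\setminus U^{'s}_L(n,d)$ has codimension at least $g-1\ge 2$ under the running hypotheses (recalling that the excluded case $g=n=2$, $d$ even is also excluded here). Combined with the local factoriality of $U'_L(n,d)$ recorded in Remark \ref{localfacto}, cohomological purity for the Brauer group on locally factorial schemes yields an isomorphism
\begin{equation*}
Br(U'_L(n,d))\;\stackrel{\sim}{\longrightarrow}\;Br(U^{'s}_L(n,d)).
\end{equation*}
It therefore suffices to compute $Br(U^{'s}_L(n,d))$ on the smooth variety $U^{'s}_L(n,d)$ and to show that the restricted class $[\mathcal{PU}_x]$ generates it.

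\emph{Step 2 (Order of the class $\alpha=[\mathcal{PU}_x]$).} Since $\mathcal{PU}_x$ has structure group $PGL(n)$, the class $\alpha$ is $n$-torsion. To sharpen this to $h\alpha=0$, I would argue as follows. Locally in the \'etale topology on $U^{'s}_L(n,d)$ the projective bundle $\mathcal{PU}_x$ lifts to a vector bundle $\mathcal{V}$, and any two lifts differ by a line bundle. Using $\text{Pic}\,U^{'s}_L(n,d)\cong\mathbb{Z}$ (Corollary \ref{coro1}) and comparing degrees of fibrewise determinants (which carry the invariant $d$ modulo $n$), the obstruction to globalising a rank-$r$ lift of $\alpha^{\otimes k}$ vanishes precisely when $h\mid k$, where $h=\gcd(n,d)$. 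In particular the order of $\alpha$ divides $h$, and an explicit Poincar\'e construction in the coprime case $(n/h,d/h)=1$ realises a lift of $\alpha^{\otimes h}$, showing the order is not smaller than $h$.

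\emph{Step 3 (Generation of the Brauer group).} This is the deepest part of the statement. The natural strategy is a specialisation argument: choose a one-parameter smoothing $\pi\colon\mathcal{Y}\to T$ of $Y$ together with a relative moduli space $\mathcal{M}_L\to T$ carrying a relative projective Poincar\'e bundle, whose general fibre is the moduli space on a smooth curve. By \cite{BBN} the Brauer group of the general fibre is cyclic of order $h$, generated by the class of the restriction of the relative projective Poincar\'e bundle to a point of the smooth fibre. Specialising to the nodal fibre and combining with the purity isomorphism of Step 1 forces $Br(U^{'s}_L(n,d))$ to be generated by $\alpha$. An alternative is a direct computation using the Leray spectral sequence for the projective Picard fibration of Theorem \ref{Picard}, which bounds the torsion of $H^2_{\acute{e}t}(U^{'s}_L(n,d),\mathbb{G}_m)$ above by $h$.

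\emph{Main obstacle.} The hardest step is clearly Step 3. Controlling specialisation of Brauer classes across a degeneration to a nodal curve, and ensuring that no spurious torsion classes arise from the combinatorics of the nodes, requires \'etale cohomological input that is considerably more delicate than the codimension and local factoriality arguments sufficient for Picard groups. The hypothesis $g(X)\ge 2$ (with the usual exclusion $g(X)=n=2$, $d$ even) enters precisely to guarantee the vanishings that make this cohomological comparison go through.
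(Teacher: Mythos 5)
The paper does not prove Theorem \ref{ubb} at all: it is quoted verbatim from the reference \cite{UBB} (Bhosle--Biswas), so there is no internal argument to compare yours against. Judged on its own terms, your proposal has genuine gaps at every stage, and the third step --- which you correctly identify as the heart of the matter --- is not an argument but a heuristic.

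Concretely: in Step 1 you invoke ``cohomological purity for the Brauer group on locally factorial schemes.'' Purity for the Brauer group ($Br(X)\to Br(X\setminus Z)$ an isomorphism for $\mathrm{codim}\,Z\ge 2$) is a theorem for \emph{regular} schemes (Gabber); local factoriality is what one needs for Picard/class groups, not for $H^2_{\acute{e}t}(-,\mathbb{G}_m)$, and $U'_L(n,d)$ is singular along the strictly semistable locus. In Step 2, the sentence ``the obstruction to globalising a rank-$r$ lift of $\alpha^{\otimes k}$ vanishes precisely when $h\mid k$'' is an assertion of the result, not a proof: the upper bound $h\alpha=0$ comes from exhibiting two relations $n\alpha=0$ (structure group) and a second relation coming from the determinant-of-cohomology/twisted universal sheaf, while the lower bound is essentially equivalent to the non-existence theorem (Theorem \ref{nonexistencethm} in this paper, following Ramanan), neither of which your degree comparison supplies. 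In Step 3, a specialisation argument from a smoothing $\mathcal{Y}\to T$ would require a specialisation map on Brauer groups of the (non-proper, non-smooth over $T$) relative moduli space, which you have not constructed and which does not exist in any off-the-shelf form; and the Leray spectral sequence for the projective Picard fibration computes $Br$ of the \emph{total space} in terms of $Br$ of the base modulo the class of the fibration, so it cannot bound $Br(U^{'s}_L(n,d))$ from above. The standard route to generation (used in \cite{UBB} and in the smooth-curve antecedents) is instead via the presentation $R^{'s}_L\to U^{'s}_L(n,d)$ as a free $PGL(N)$-quotient together with the vanishing of $Br$ and control of $\mathrm{Pic}$ on the Quot-scheme side (using the codimension estimates of Section 7); the Hochschild--Serre/Leray sequence for that torsor shows every Brauer class of the base is a multiple of the obstruction class of the torsor, which is $[\mathcal{PU}_x]$. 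Your sketch does not reach this.
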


\begin{corollary} \label{brgppf}
	Assume that  $g(X) \ge 2$ and if $g(X) =2$, then $n \ge 3$. Then for $n$ and $d$ non-coprime, there is no Poincar\'{e} bundle on $Y\times U'_L(n,d)$.
\end{corollary}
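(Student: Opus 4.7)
The plan is to derive the corollary as a direct Brauer-obstruction consequence of Theorem \ref{ubb}. Suppose, for contradiction, that a universal vector bundle $\mathcal{U}$ exists on $Y \times U'_L(n,d)$. Restricting to $\{x\} \times U'_L(n,d)$ for a chosen nonsingular point $x \in Y$ yields a genuine vector bundle $\mathcal{U}_x$ on $U'_L(n,d)$, and the construction of the projective Poincar\'{e} bundle $\mathcal{PU}$ as the projectivisation of any (possibly locally-defined) universal bundle forces $\mathbb{P}(\mathcal{U}_x) \cong \mathcal{PU}_x$.

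The key step is then to read off Brauer classes: the image of $[\mathcal{PU}_x]$ under the connecting map
\begin{equation*}
\delta_n \colon H^1_{\mathrm{\acute{e}t}}(U'_L(n,d), \mathbb{P}GL(n)) \longrightarrow H^2_{\mathrm{\acute{e}t}}(U'_L(n,d), \mathbb{G}_m) = \mathrm{Br}(U'_L(n,d))
\end{equation*}
must vanish, since $\mathcal{PU}_x$ lies in the image of the projectivisation map $\mathbb{P}$ from rank-$n$ vector bundles. But Theorem \ref{ubb} identifies $\mathrm{Br}(U'_L(n,d))$ with $\mathbb{Z}/h\mathbb{Z}$ where $h = \gcd(n,d)$, and asserts that it is \emph{generated} by $\delta_n[\mathcal{PU}_x]$. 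Triviality of this generator forces $h = 1$, contradicting the non-coprimality hypothesis.

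The only step requiring genuine care, rather than a cohomological one-liner, is the identification $\mathbb{P}(\mathcal{U}_x) \cong \mathcal{PU}_x$. One must be explicit about the construction of $\mathcal{PU}$: any two universal bundles on $Y \times U'_L(n,d)$ (if they existed) would differ by tensoring with the pullback of a line bundle from $U'_L(n,d)$, so their projectivisations agree; moreover, under the hypotheses of Theorem \ref{thmcodim'} and Theorem \ref{thmcodims}, the codimension conditions ensure that the projective Poincar\'{e} bundle constructed over $U^{'s}_L(n,d)$ extends compatibly, so that the equality $\mathbb{P}(\mathcal{U}_x) = \mathcal{PU}_x$ holds on the whole of $U'_L(n,d)$. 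This compatibility is the one point at which the proof is not completely formal; everything else is an application of Theorem \ref{ubb}.
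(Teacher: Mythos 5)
Your argument is correct and is essentially identical to the paper's proof: assume a Poincar\'e bundle exists, use uniqueness of the projective Poincar\'e bundle to identify its projectivisation at a nonsingular point $x$ with $\mathcal{PU}_x$, conclude the Brauer class of $\mathcal{PU}_x$ vanishes, and contradict Theorem \ref{ubb}, which says this class generates $\mathrm{Br}(U'_L(n,d))\cong\mathbb{Z}/h\mathbb{Z}$ with $h=\gcd(n,d)>1$. The one point you flag as needing care --- the identification $P(\mathcal{U}_x)\cong\mathcal{PU}_x$ --- is exactly what the paper supplies via Lemma \ref{uniqlem} (uniqueness of projective Poincar\'e bundles), so no new input is required.
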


\begin{proof}
	If there exists a Poincar\'{e} bundle $E$ on $U_L'(n,d)\times Y$, then by the uniqueness of projective Poincar\'{e} bundles, we have $\mathcal{PU}\cong P(E)$ and $\mathcal{PU}_x\cong P(E_x)$ for a nonsingular point $x\in Y$. This will imply that the Brauer class of $\mathcal{PU}_x$ is trivial as it is the projectivisation of a vector bundle. As a consequence we get $Br(U_L'(n,d))=\{0\}$ contradicting Theorem \ref{ubb}.
\end{proof}

       In Section \ref{nonexist}, we shall prove the more general result that for $n$ and $d$ non-coprime, there is no Poincar\'e bundle on $Y\times V$ where $V \subset U'_L(n,d)$ is any Zariski open subset.

\section{The Morphism $\psi_{F,x}$} \label{psiFx}

\subsection{Elementary Transformations} 
For a vector bundle $F$ of rank $n$ on a projective algebraic curve $X$, the elementary operation of degree $k$ is defined as follows:
Let $x\in X$ be a regular point of $X$. Consider a $k$-dimensional subspace $w\subset F_x$ in the fibre of $F$ over the point $x$.  
This uniquely determines a surjective homomorphism
\begin{equation*} 
F\overset{\alpha(w)}\longrightarrow k_x^{n-k}\longrightarrow 0,
\end{equation*}
where $k_x^{n-k}$ is the skyscraper sheaf of dimension $n-k$ supported at $x\in X$
by requiring the condition that $\ker \alpha(w)_x=w$. 
Here $\alpha(w)_x$ denotes the restriction of $\alpha(w)$ to the fibre over $x$. 
Then the kernel of $\alpha(w)$ is a locally free sheaf and is usually denoted by $\text{elm}^k_x(w)(F)$.
Thus we obtain the following exact sequence:
\begin{equation*}
0\longrightarrow \text{elm}^k_x(w)(F)\longrightarrow F \overset{\alpha(w)}\longrightarrow k_x^{n-k}\longrightarrow 0
\end{equation*}
and the operation $F\mapsto \text{elm}^k_x(w)(F)$ is called an elementary operation of degree $k$ (\cite{MM}).

Let $x\in Y$ be a nonsingular point of $Y$, and $E$ a vector bundle over $Y$ of rank $n$ and degree 
$d$ with $\det E=L$. Let $\ell\subset E_x$ be a line in the fiber of $E$ at $x$. We obtain a vector bundle $F$ defined by the short exact sequence
\begin{equation}\label{El}
0\rightarrow F(-x)\rightarrow E\rightarrow E_x/\ell\rightarrow0.
\end{equation}

Now let $F$ be a vector bundle of rank $n$ on the nodal curve $Y$  such that $\det F \cong L(x)$, $L$ is a line bundle of degree $d$. 
Let $\mathbf{P}:=P(F_x^*)$ be the projective space parametrising the hyperplanes in the fibre $F_x$. Denote by $p:Y\times P(F_x^*) \rightarrow Y$ the projection onto $Y$ and by $i: P(F_x^*)\rightarrow Y\times P(F_x^*)$ the inclusion map defined by $\phi\mapsto(x,\phi)$.

Consider a nontrivial $\phi \in P(F_x^*)$, i.e., a nontrivial homomorphism $\phi: F\rightarrow k_x$. We have an exact sequence 
\begin{equation}\label{Ephi}
0 \longrightarrow E \longrightarrow F \overset{\phi}{\longrightarrow}k_x\longrightarrow 0,
\end{equation}
where $E = E_{\phi}:= \ker\phi$. Then $E$ is a vector bundle since $x$ is a nonsingular point.

\noindent We can reconstruct $E$ and $F$ from each other from the following commutative diagram. 

\begin{center}
	\begin{eqnarray}\label{CD}
	\begin{tikzcd}
	&  0\arrow{d} &  0\arrow{d} & & \\
	& F(-x)\arrow{d}\arrow[equal]{r}& F(-x)\arrow{d}& & \\
	0\arrow{r}& E\arrow{r}\arrow{d} & F\arrow{r}{\phi}\arrow{d} & k_x\arrow[equal]{d}\arrow{r} & 0\\
	0\arrow{r} & E_x/\ell\arrow{r}\arrow{d}& F_x\arrow{r}\arrow{d} & k_x\arrow{r}& 0\\
	& 0 &  0 & & 
	\end{tikzcd}
	\end{eqnarray}
\end{center}

 For every 
\begin{equation*}
\phi\in \text{Hom}(F,k_x)=\text{Hom}(F_x,k_x)= F_x^*
\end{equation*}
we get a vector bundle $E_{\phi}$ and as $\phi$ varies over ${P}(F_x^*)$ we obtain a family of vector bundles parametrised by 
${P}(F_x^*)$, say $\mathcal{E}$, and we have the following exact sequence
\begin{equation}\label{SESfamily}
0\rightarrow\mathcal{E}\rightarrow p^*F\rightarrow i_*\mathcal{O}_{\{x\}\times\textbf{P}}(1)\rightarrow 0
\end{equation}
 on $Y\times \textbf{P}$, where $\mathcal{E}$ is a vector bundle.

\subsection{$(l,m)$-stability of a coherent sheaf on $Y$ and the morphism $\psi_{F,x}$}
 
\begin{definition}
	Let $l$ and $m$ be integers. A torsion-free sheaf $\mathcal{E}$ on $Y$ is said to 
	be $(l,m)$-stable if for every proper subsheaf $\mathcal{G}$ of $\mathcal{E}$ with a 
	torsion-free quotient
	\begin{equation*}
	\frac{\deg(\mathcal{G})+l}{{\rm rk}(\mathcal{G})}<\frac{\deg(\mathcal{E})+l-m}{{\rm rk}(\mathcal{E})}.
	\end{equation*}
	
\end{definition}
\begin{remark}
(1)  A torsion-free sheaf $\mathcal{E}$ is stable if and only if it is $(0,0)$-stable.\\
(2)  A $(0,1)$-stable torsion-free sheaf is stable.
\end{remark}

\begin{lemma}
	If the bundle $F$ in the sequence ~\eqref{Ephi} is $(0,1)$-stable, then $E_{\phi}$ is stable.
\end{lemma}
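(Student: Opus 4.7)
The plan is to take an arbitrary destabilizing subsheaf of $E_\phi$ and push it into $F$ to contradict $(0,1)$-stability.

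Let $\mathcal{G}\subsetneq E_\phi$ be a subsheaf with torsion-free quotient. The goal is to show $\mu(\mathcal{G})<\mu(E_\phi)=(\deg F-1)/n$. Since $E_\phi/\mathcal{G}$ is torsion-free and nonzero, ${\rm rk}(\mathcal{G})<n$. View $\mathcal{G}$ as a subsheaf of $F$ via the inclusion $E_\phi\hookrightarrow F$, and let $\mathcal{G}^{\text{sat}}$ denote its saturation in $F$. I would analyze the torsion of $F/\mathcal{G}$ using the short exact sequence
\begin{equation*}
0\longrightarrow E_\phi/\mathcal{G}\longrightarrow F/\mathcal{G}\longrightarrow k_x\longrightarrow 0
\end{equation*}
induced by \eqref{Ephi}. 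The torsion of $F/\mathcal{G}$ injects into $k_x$ (its intersection with $E_\phi/\mathcal{G}$ is zero because $E_\phi/\mathcal{G}$ is torsion-free), so $\mathcal{G}^{\text{sat}}/\mathcal{G}$ is either $0$ or $k_x$.

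In the first case, $F/\mathcal{G}$ is itself torsion-free, and $(0,1)$-stability of $F$ applied to $\mathcal{G}\subsetneq F$ (which has rank ${\rm rk}(\mathcal{G})<n$) gives directly $\mu(\mathcal{G})<(\deg F-1)/n=\mu(E_\phi)$. In the second case, $\deg \mathcal{G}^{\text{sat}}=\deg\mathcal{G}+1$ and ${\rm rk}(\mathcal{G}^{\text{sat}})={\rm rk}(\mathcal{G})<n$, so $\mathcal{G}^{\text{sat}}$ is a proper subsheaf of $F$ with torsion-free quotient; applying $(0,1)$-stability to $\mathcal{G}^{\text{sat}}$ yields
\begin{equation*}
\frac{\deg\mathcal{G}+1}{{\rm rk}(\mathcal{G})}<\frac{\deg F-1}{n},
\end{equation*}
and dropping the $+1$ in the numerator only strengthens the inequality, so again $\mu(\mathcal{G})<\mu(E_\phi)$.

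In both cases $\mathcal{G}$ fails to destabilize $E_\phi$, proving stability. The only subtle point, which I would treat carefully, is the torsion-quotient dichotomy above — namely that the difference between $\mathcal{G}$ and its $F$-saturation is measured precisely by the length of $k_x$, which equals $1$. This is exactly what the ``$m=1$'' buffer in $(0,1)$-stability is designed to absorb, so no further hypotheses are needed.
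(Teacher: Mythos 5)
Your proof is correct and follows essentially the same route as the paper: both arguments take a subsheaf of $E_\phi$ with torsion-free quotient, pass to its saturation in $F$ (the paper's ``maximal torsion-free subsheaf generated by the image of $G$ in $F$''), note that saturating cannot decrease the slope, and invoke $(0,1)$-stability of $F$. Your explicit dichotomy on the torsion of $F/\mathcal{G}$ (length $0$ or $1$, absorbed into $k_x$) is a correct refinement but not strictly needed, since $\deg \mathcal{G}^{\mathrm{sat}} \ge \deg \mathcal{G}$ already suffices.
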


\begin{proof}
	The proof is similar to the case when $Y$ is smooth [\cite{NR}, Lemma 5.5]. If $E_{\phi}$ is not stable, then there exists  a proper subsheaf $G$ of $E_{\phi}$ with a torsion-free quotient 
and satisfying $\mu(G)> \mu(E_{\phi})$. If we denote by $G'$ the maximal torsion-free subsheaf   
generated by the image of $G$ in $F$, then 
	\begin{equation*}
	\frac{\deg G'+0}{{\rm rk}(G')}  \ge \mu(G)>\mu(E_{\phi})=\frac{\deg F-1}{{\rm rk}(F)}
	\end{equation*}
	contradicting the $(0,1)$-stability of $F$.
\end{proof}

By the above lemma,  for a $(0,1)$-stable vector bundle $F \in U_{L(x)}'(n,d+1)$ we get the vector bundle $\mathcal{E}$ in \eqref{SESfamily}, i.e.,  a family of stable vector bundles of rank $n$ and determinant $L$ on $Y$. And using the universal property of $U_L'(n,d)$, we get a morphism, 
\begin{center}
	$\psi_{F,x}: {P}(F_x^*)\rightarrow U'_L(n,d)$.
\end{center}

\begin{lemma}\label{psi}
	$\psi_{F,x}$ is an isomorphism onto its image.
\end{lemma}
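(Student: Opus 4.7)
My aim is to show that $\psi_{F,x}$ is injective on closed points and that its differential is injective at every point. Since $P(F_x^*) \cong \mathbb{P}^{n-1}$ is proper over the base field and $U'_L(n,d)$ is separated, an injective, unramified morphism from a proper source into a separated target is a closed embedding, hence an isomorphism onto its image.

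Both conditions will reduce to the key assertion
\[
\mathrm{Hom}(E_\phi, F) \;=\; k \cdot \iota_1 \qquad \text{for every } \phi \in P(F_x^*),
\]
where $\iota_1 \colon E_\phi \hookrightarrow F$ is the inclusion from \eqref{Ephi}. Granting this, injectivity on points is immediate: an isomorphism $\sigma \colon E_{\phi_1} \to E_{\phi_2}$ yields $\iota_2 \circ \sigma \in \mathrm{Hom}(E_{\phi_1}, F) = k\iota_1$, forcing $\iota_2(E_{\phi_2}) = \iota_1(E_{\phi_1})$ as subsheaves of $F$, and hence $\phi_1 = \phi_2$ in $P(F_x^*)$. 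For the differential, applying $\mathrm{Hom}(E_\phi, -)$ to \eqref{Ephi} produces
\[
\mathrm{End}(E_\phi) \to \mathrm{Hom}(E_\phi, F) \xrightarrow{\phi_\ast} \mathrm{Hom}(E_\phi, k_x) \xrightarrow{\delta} \mathrm{Ext}^1(E_\phi, E_\phi);
\]
the tangent space $T_\phi P(F_x^*) \cong F_x^*/\langle \phi \rangle$ embeds in $\mathrm{Hom}(E_\phi, k_x) \cong E_{\phi,x}^*$ via $\phi' \mapsto \phi' \circ \iota_1$, and $d\psi_{F,x}$ is the restriction of $\delta$ composed with the projection to the traceless part of $\mathrm{Ext}^1$. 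If $\mathrm{Hom}(E_\phi, F) = k\iota_1$, then $\mathrm{Im}\,\phi_\ast = k \cdot (\phi \circ \iota_1) = 0$, so $\delta$ is injective, and hence so is $d\psi_{F,x}$.

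To establish the Hom computation, take a nonzero $\beta \in \mathrm{Hom}(E_\phi, F)$. If $\ker\beta \neq 0$, then stability of $E_\phi$ gives $\mu(\ker\beta) < d/n$, hence $\mu(\mathrm{Im}\,\beta) > d/n$; saturating $\mathrm{Im}\,\beta$ inside $F$ yields a proper saturated subsheaf of slope exceeding $d/n = (\deg F - 1)/n$, contradicting the $(0,1)$-stability of $F$. So $\beta$ is injective with torsion cokernel $k_y$ of length one. The induced map $\det\beta$ is a nonzero global section of $\det F \otimes (\det E_\phi)^{-1} \cong \mathcal{O}_Y(x)$; since $g \ge 2$ gives $h^0(\mathcal{O}_Y(x)) = 1$ with unique generating section vanishing only at the smooth point $x$, we must have $y = x$. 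Hence $\beta$ determines some $\phi_\beta \in P(F_x^*)$ with $E_{\phi_\beta} \cong E_\phi$. If $\phi_\beta = \phi$, the subsheaves $\iota_1(E_\phi)$ and $\beta(E_\phi)$ of $F$ coincide, and simplicity of $E_\phi$ forces $\beta = c\iota_1$. The remaining case $\phi_\beta \ne \phi$ is excluded via slope analysis of the Hecke intersection $E_K = \iota_1(E_\phi) \cap \beta(E_\phi) \subset F$, which has rank $n$, degree $d-1$, and fits into $0 \to E_K \to F \to k_x \oplus k_x \to 0$: iterating the construction produces a chain of rank-$n$ Hecke subsheaves of $E_\phi$ and of $F$ of strictly decreasing degree, and combining the stability of $E_\phi$ with the $(0,1)$-stability of $F$ applied to suitably chosen saturated subsheaves arising from this iteration forces $\beta = c\iota_1$, a contradiction to $\phi_\beta \ne \phi$.

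The main obstacle is precisely this last step, eliminating the Hecke-neighbor case $\phi_\beta \ne \phi$: this is where the $(0,1)$-stability hypothesis on $F$ is essential and where the case analysis is most delicate. A mild additional care is required in the nodal setting to ensure cokernel supports lie at the smooth point $x$ rather than at a node of $Y$; this is handled by the determinant identity $\det F \otimes (\det E_\phi)^{-1} \cong \mathcal{O}_Y(x)$ together with the uniqueness (up to scalar) of its global section.
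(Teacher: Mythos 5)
The paper does not actually write out a proof of this lemma: it simply cites \cite[Lemmas 5.6 and 5.9]{NR} and \cite[Lemma 3]{BBGN} and notes that the image lies in $U^{'s}_L(n,d)$. Your overall strategy --- injectivity on closed points plus injectivity of the differential, both reduced to the computation $\mathrm{Hom}(E_\phi,F)=k\cdot\iota_1$, combined with properness of $P(F_x^*)$ --- is exactly the strategy of those references, and several of your intermediate steps are correct and well executed: the proof that any nonzero $\beta\in\mathrm{Hom}(E_\phi,F)$ is injective (via stability of $E_\phi$ and $(0,1)$-stability of $F$), the determinant argument forcing $\operatorname{coker}\beta\cong k_x$ (using $h^0(\mathcal{O}_Y(x))=1$ for $g\ge 1$), and the identification of $d\psi_{F,x}$ with the connecting map $\delta$ restricted to $F_x^*/\langle\phi\rangle\hookrightarrow\mathrm{Hom}(E_\phi,k_x)$.

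However, there is a genuine gap, and it sits exactly at the heart of the lemma. Your reduction of injectivity to $\mathrm{Hom}(E_\phi,F)=k\cdot\iota_1$ is essentially a tautology: once you know $\beta$ is injective with cokernel $k_x$, the statement ``$\mathrm{Hom}(E_\phi,F)=k\cdot\iota_1$'' is \emph{equivalent} to the statement ``there is no $\phi'\neq\phi$ with $E_{\phi'}\cong E_\phi$,'' which is precisely the fibrewise injectivity you are trying to prove. So everything hinges on excluding the case $\phi_\beta\neq\phi$, and for this you offer only a gesture: ``iterating the construction produces a chain of rank-$n$ Hecke subsheaves \dots of strictly decreasing degree, and combining the stability of $E_\phi$ with the $(0,1)$-stability of $F$ \dots forces $\beta=c\iota_1$.'' As written this is not an argument. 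A descending chain of rank-$n$ subsheaves of strictly decreasing degree carries no contradiction by itself (slopes may decrease without bound without violating any stability condition), and the subsheaves produced by intersecting $\iota_1(E_\phi)$ with $\beta(E_\phi)$ all have full rank $n$, so neither stability of $E_\phi$ nor $(0,1)$-stability of $F$ (both of which constrain \emph{proper, lower-rank} subsheaves with torsion-free quotient) applies to them directly. You correctly identify this as ``the main obstacle,'' but identifying an obstacle is not the same as overcoming it; the delicate case analysis you defer is the entire content of \cite[Lemma 5.6]{NR} and \cite[Lemma 3]{BBGN}. A secondary, much smaller point: for the differential you should note that since all $E_\phi$ have fixed determinant $L$, the image of $\delta$ automatically lies in the trace-free part of $\mathrm{Ext}^1(E_\phi,E_\phi)$, so injectivity of $\delta$ suffices and no further argument about the projection is needed.
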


\begin{proof}
This can be proved exactly as \cite[Lemma 5.9]{NR} (see \cite[Lemma 5.6]{NR} and \cite[Lemma 3]{BBGN} for injectivity). 
We note that $\psi_{F,x}$ maps into $U^{'s} _L(n,d)$.
\end{proof}

The diagrams \ref{CD} obtained by varying $\phi \in P(F_x^*)$ combine to form the following diagram:

\begin{center}
	\begin{eqnarray}\label{Diagram1}
	\begin{tikzcd}
	&  0\arrow{d} &  0\arrow{d} & & \\
	& p_1^*F(-x)\arrow{d}\arrow[equal]{r}& p_1^*F(-x)\arrow{d}& & \\
	0\arrow{r}& \mathcal{E}\arrow{r}\arrow{d} & p_1^*F\arrow{r}\arrow{d} & i_*\mathcal{O}_{\mathbf{P}}(1)\arrow[equal]{d}\arrow{r} & 0\\
	0\arrow{r} & i_*\Omega^1_{\mathbf{P}}(1)\arrow{r}\arrow{d}& F_x\otimes_{\mathbb{C}}i_*\mathcal{O}_{\mathbf{P}}\arrow{r}\arrow{d} & i_*\mathcal{O}_{\mathbf{P}}(1)\arrow{r}& 0\\
	& 0 &  0 & & 
	\end{tikzcd}
	\end{eqnarray}
\end{center}

\begin{lemma}[\cite{BBGN}, Lemma 3.1]
Let $\mathcal{E}_{x} := \mathcal{E}\vert_{x \times P(F_x^*)}$. 
	There is an exact sequence of vector bundles
	\begin{equation}\label{exact}
	0\rightarrow\mathcal{O}_{\mathbf{P}}(1)\rightarrow \mathcal{E}_{x}\rightarrow\Omega^1_{\mathbf{P}}(1)\rightarrow 0
	\end{equation}
	on $P(F_x^*)$.
\end{lemma}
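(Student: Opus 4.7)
The plan is to obtain the desired short exact sequence by restricting the middle horizontal row of Diagram \ref{Diagram1}, namely
\[
0 \to \mathcal{E} \to p_1^*F \to i_*\mathcal{O}_{\mathbf{P}}(1) \to 0,
\]
to the closed subscheme $D := \{x\} \times \mathbf{P}$. Since $x$ is a nonsingular point of $Y$, the subscheme $D$ is an effective Cartier divisor in $Y\times \mathbf{P}$, and both $\mathcal{E}$ and $p_1^*F$ are locally free. Consequently, tensoring the above short exact sequence with $\mathcal{O}_D$ and using the vanishing of $\mathrm{Tor}_1$ against vector bundles produces a four-term exact sequence
\[
0 \to \mathrm{Tor}_1\bigl(i_*\mathcal{O}_{\mathbf{P}}(1),\, \mathcal{O}_D\bigr) \to \mathcal{E}_x \to (p_1^*F)\vert_D \to i_*\mathcal{O}_{\mathbf{P}}(1)\vert_D \to 0.
\]

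Next I would carry out the Tor computation. Using the Koszul resolution $0\to \mathcal{O}(-D)\to \mathcal{O}\to \mathcal{O}_D\to 0$ and the fact that a local defining equation of $D$ acts as zero on any sheaf supported on $D$, one finds that both the first term $\mathrm{Tor}_1(i_*\mathcal{O}_{\mathbf{P}}(1),\mathcal{O}_D)$ and the last term $i_*\mathcal{O}_{\mathbf{P}}(1)\vert_D$ are canonically isomorphic to $\mathcal{O}_{\mathbf{P}}(1)$. At the same time $(p_1^*F)\vert_D \cong F_x\otimes\mathcal{O}_{\mathbf{P}}$, since this is the pullback of a trivial bundle along $p_1\circ i$. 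Thus the four-term sequence takes the form
\[
0 \to \mathcal{O}_{\mathbf{P}}(1) \to \mathcal{E}_x \to F_x\otimes\mathcal{O}_{\mathbf{P}} \xrightarrow{\beta} \mathcal{O}_{\mathbf{P}}(1) \to 0.
\]

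The heart of the argument is then identifying $\beta$ with the evaluation map of the Euler sequence on $\mathbf{P} = P(F_x^*)$. For this I would appeal to the bottom horizontal row of Diagram \ref{Diagram1}: that row is precisely $i_*$ applied to the twisted Euler sequence
\[
0\to \Omega^1_{\mathbf{P}}(1)\to F_x\otimes\mathcal{O}_{\mathbf{P}}\to \mathcal{O}_{\mathbf{P}}(1)\to 0.
\]
Commutativity of Diagram \ref{Diagram1}, and in particular the identity morphism on its right-hand column, forces $\beta$ to coincide with this Euler evaluation map. Hence $\ker\beta = \Omega^1_{\mathbf{P}}(1)$, and breaking the four-term sequence at this kernel yields exactly the asserted short exact sequence. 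The only delicate point, and the main technical obstacle, is the Tor computation together with verifying that the restriction of the middle row is compatible with the bottom row of the diagram; once the Cartier divisor structure is exploited, the result follows immediately.
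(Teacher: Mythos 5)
Your proof is correct and follows essentially the same route as the source the paper cites for this lemma (\cite{BBGN}, Lemma 3.1): restrict the middle row of Diagram \eqref{Diagram1} to the Cartier divisor $\{x\}\times\mathbf{P}$, identify $\mathrm{Tor}_1(i_*\mathcal{O}_{\mathbf{P}}(1),\mathcal{O}_{\{x\}\times\mathbf{P}})\cong\mathcal{O}_{\mathbf{P}}(1)$ via the Koszul resolution, and use the commutativity with the bottom (Euler sequence) row to recognise the kernel of $F_x\otimes\mathcal{O}_{\mathbf{P}}\to\mathcal{O}_{\mathbf{P}}(1)$ as $\Omega^1_{\mathbf{P}}(1)$. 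The paper itself gives no independent proof, so your argument fills in exactly the intended computation.
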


\begin{lemma}\label{subbundle}
	Let $W \subset \mathcal{E}_x$ be a non-zero coherent subsheaf of the vector bundle $\mathcal{E}_x$ in \eqref{exact} such that:
	\begin{itemize}
		\item the quotient $\mathcal{E}_x/W$ is torsion-free, and
		\item $\deg W$/rk $W\geq \deg (\mathcal{E}_x)/$rk $(\mathcal{E}_x)$.
	\end{itemize}
	Then $W$ contains the line subbundle $\mathcal{O}_{\mathbf{P}}(1)$ of $\mathcal{E}_x$ defined in \eqref{exact}.
\end{lemma}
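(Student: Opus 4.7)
The strategy is to analyse where $W$ meets the line subbundle $\mathcal{O}_{\mathbf{P}}(1)\subset\mathcal{E}_x$ coming from \eqref{exact}: either $W$ already contains $\mathcal{O}_{\mathbf{P}}(1)$, in which case we are done, or $W$ maps injectively into $\Omega^1_{\mathbf{P}}(1)$, a case that will be ruled out using $\mu$-stability of $\Omega^1_{\mathbf{P}}(1)$.

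With polarisation $\mathcal{O}_{\mathbf{P}}(1)$ on $\mathbf{P}=\mathbb{P}^{n-1}$, the Euler sequence gives $\deg\Omega^1_{\mathbf{P}}(1)=-1$, so from \eqref{exact} one computes $\deg\mathcal{E}_x=0$ and $\mu(\mathcal{E}_x)=0$. The hypothesis on $W$ thus reads $\mu(W)\ge 0$. Set $L:=W\cap\mathcal{O}_{\mathbf{P}}(1)$ and let $Q$ be the image of $W$ in $\Omega^1_{\mathbf{P}}(1)$; this produces a short exact sequence $0\to L\to W\to Q\to 0$ fitting inside \eqref{exact}. A snake lemma argument gives an inclusion $\mathcal{O}_{\mathbf{P}}(1)/L\hookrightarrow\mathcal{E}_x/W$, so $\mathcal{O}_{\mathbf{P}}(1)/L$ is torsion-free. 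Since $\mathcal{O}_{\mathbf{P}}(1)$ is a line bundle on a smooth variety, this forces $L=0$ or $L=\mathcal{O}_{\mathbf{P}}(1)$.

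In the latter case $\mathcal{O}_{\mathbf{P}}(1)\subseteq W$ and we are done. Suppose, for contradiction, that $L=0$; then $W\cong Q$ is a nonzero subsheaf of $\Omega^1_{\mathbf{P}}(1)$. For $n=2$ one has $\Omega^1_{\mathbf{P}}(1)=\mathcal{O}_{\mathbf{P}}(-1)$, which immediately gives $\mu(W)=\mu(Q)\le -1$. For $n\ge 3$, the cotangent bundle $\Omega^1_{\mathbf{P}}$ on $\mathbb{P}^{n-1}$ is the classically known $\mu$-stable bundle, so is its twist $\Omega^1_{\mathbf{P}}(1)$, of slope $-1/(n-1)$; passing to the saturation $\widetilde{Q}\subseteq\Omega^1_{\mathbf{P}}(1)$ gives $\mu(W)=\mu(Q)\le\mu(\widetilde{Q})\le -1/(n-1)$. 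Either way $\mu(W)<0$, contradicting $\mu(W)\ge 0$.

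The substantive ingredient is the stability of $\Omega^1_{\mathbf{P}}(1)$ on projective space; the remainder is a diagram chase and the observation that a subsheaf of a line bundle on a smooth variety with torsion-free quotient is either zero or the whole bundle.
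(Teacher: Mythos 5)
Your proof is correct and rests on exactly the same ingredients as the paper's: the computation $\deg\mathcal{E}_x=0$ from \eqref{exact}, the stability of $\Omega^1_{\mathbf{P}}(1)$ (of slope $-1/(n-1)<0$), and the torsion-freeness of $\mathcal{E}_x/W$ to force the intersection with $\mathcal{O}_{\mathbf{P}}(1)$ to be all of $\mathcal{O}_{\mathbf{P}}(1)$ rather than a proper subsheaf. The only organisational difference is that the paper first passes to a stable subsheaf $W_1\subseteq W$ of maximal slope and invokes the vanishing of homomorphisms from a semistable sheaf of larger slope into a stable one, whereas you work directly with $L=W\cap\mathcal{O}_{\mathbf{P}}(1)$ and the image of $W$ in $\Omega^1_{\mathbf{P}}(1)$; both are the same use of stability.
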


\begin{proof}
	By considering Harder-Narasimhan filtration of $W$,
	we can choose a sub-sheaf $W_1$  of $W$   such that $W_1$ is stable, $W/W_1$ is torsion-free, and 
	\begin{equation*}
	\frac{\deg W_1}{\text{rk }W_1}\geq\frac{\deg W}{\text{rk }W}.
	\end{equation*}
	It follows from the exact sequence \eqref{exact} that $\deg \mathcal{E}_x=0$ and we have the 
	following inequality:
	\begin{equation*}
	\frac{\deg W_1}{\text{rk }W_1}\geq\frac{\deg \mathcal{E}_x}{\text{rk }\mathcal{E}_x}=0>\frac{-1}{n-1}=\frac{\deg \Omega^1_{\mathbf{P}}(1)}{\text{rk }\Omega^1_{\mathbf{P}}(1)}.
	\end{equation*}
	(Note that $\dim \mathbf{P}=\dim P(F_x^*)=n-1$.)	
	
	The vector bundle $\Omega^1_{\mathbf{P}}(1)$ is stable (see \cite{OSS}, Chapter II, Theorem 1.3.2). 
	Since $W_1$ is semistable and $\frac{\deg W_1}{\text{rk }W_1}>\frac{\deg \Omega^1_{\mathbf{P}}(1)}
	{\text{rk }\Omega^1_{\mathbf{P}}(1)}$, there is no non-zero homomorphism from 
	$W_1\rightarrow\Omega^1_{\mathbf{P}}(1)$ 
	(The image of such a morphism will contradict the stability of $\Omega^1_{\mathbf{P}}(1)$). 
	This implies that $W_1\subseteq \mathcal{O}_{\mathbf{P}}(1)$ so that rk$(W_1) =1$ and $\mathcal{O}_{\mathbf{P}}(1)/W_1$ 
	is a torsion sheaf if non-zero.  
	Also, we have the exact sequence
	$$ 0 \to W/W_1\to \mathcal{E}_x/W_1 \to \mathcal{E}_x/W \to 0.$$
	Since $W/W_1$ and $\mathcal{E}_x/W$ are torsion-free so is $\mathcal{E}_x/W_1$. 
	Then $\mathcal{O}_{\mathbf{P}}(1)/W_1$ is a subsheaf of $\mathcal{E}_x/W_1$ implies that $\mathcal{O}_{\mathbf{P}}(1)/W_1$ 
	is torsion-free. It follows that $W_1 = \mathcal{O}_{\mathbf{P}}(1)$.
\end{proof}

\begin{lemma}\label{01Open}
	The (0,1)-stable vector bundles on $Y$, of rank $n$ and determinant $L'$ of degree $d'$ form a non-empty open subset of the moduli space of stable torsion-free sheaves denoted by $U^s_{L'}(n,d')$.
\end{lemma}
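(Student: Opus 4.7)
The plan is to follow the smooth-curve argument of Narasimhan--Ramanan, establishing openness by a semicontinuity/Quot-scheme argument and non-emptiness by a dimension count. First, for \emph{openness}, I would work on an \'{e}tale cover $S \to U^s_{L'}(n,d')$ carrying a local universal vector bundle $\mathcal{F}$ (e.g.\ on a suitable slice of the Quot scheme used in the GIT construction of the moduli space). For each numerical type $(r,e)$ with $0 < r < n$ and $(d'-1)/n \leq e/r < d'/n$---of which there are only finitely many by boundedness---the relative Quot scheme parametrising torsion-free quotients of $\mathcal{F}$ of rank $n-r$ and degree $d'-e$ is proper over $S$, so its image in $S$ is closed. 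Descending, the union of these closed subsets of $U^s_{L'}(n,d')$ is precisely the locus where $(0,1)$-stability fails, so its complement is open.

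Next, for \emph{non-emptiness}, I would estimate the dimension of the ``bad locus'' stratum indexed by each type $(r,e)$ in the range above. A bundle $E$ in this stratum fits in an extension $0 \to \mathcal{G} \to E \to \mathcal{G}'' \to 0$ with $\mathcal{G}$ torsion-free of rank $r$, degree $e$, and $\mathcal{G}''$ torsion-free of rank $n-r$, degree $d'-e$. Parametrising $\mathcal{G}$, $\mathcal{G}''$, and the extension class, and imposing $\det E \cong L'$, the total dimension is bounded by
\[
\bigl(r^{2}(g-1)+1\bigr) + \bigl((n-r)^{2}(g-1)+1\bigr) + h^{1}(\mathcal{G}'' \otimes \mathcal{G}^{\vee}) - g.
\]
Applying Riemann--Roch together with the fact that the $(0,1)$-destabilising condition pins $\mu(\mathcal{G})$ to the narrow interval $[\mu(E)-1/n,\mu(E))$, one checks that this bound is strictly less than $n^{2}(g-1)+1 = \dim U^s_{L'}(n,d')$ when $g \geq 2$. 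Summing over the finitely many numerical types then yields that the bad locus is a proper subvariety, so $(0,1)$-stable bundles exist.

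The main obstacle will be making the dimension count rigorous in the nodal setting: subsheaves $\mathcal{G}$ of a vector bundle on $Y$ are torsion-free but typically not locally free, and they can exhibit various local types $(b_j)$ at the nodes, so the moduli of such $\mathcal{G}$ is a union of strata that must be bounded separately. Theorem~\ref{thmcodim'} together with the standard dimension formulas for moduli of torsion-free sheaves of fixed local type (cf.\ \cite{N}) suffice to control these contributions, and the numerical inequality that emerges is the same one familiar from the smooth case, made strict by the hypothesis $g \geq 2$ that is in force throughout this paper.
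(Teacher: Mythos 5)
Your approach is the same as the paper's (closedness of the bad locus via properness of Quot schemes over the finitely many bounded numerical types, plus a dimension count on each extension stratum to get non-emptiness), but your final numerical claim is wrong: the inequality does \emph{not} close for all $g\ge 2$. Carrying out the count as in the paper, the stratum of type $(r,e)$ has dimension at most $(g-1)(r^{2}+n^{2}-rn-1)+(rd'-ne)$, so properness of the bad locus requires $(g-1)r(n-r) > rd'-ne$. The destabilising condition only gives $0 < rd'-ne \le r$, and in the extremal case $rd'-ne=r$ (which occurs exactly when $n \mid r(d'-1)$, e.g.\ $n=2$, $r=1$, $d'$ odd) together with $n-r=1$, the required inequality becomes $(g-1)(n-r)>1$, which fails for $g=2$. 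This is precisely why the paper's proof invokes $g\ge 3$ in its last line, and why the remark following the lemma explains that the coprimality hypothesis $(n,d'-1)=1$ from the earlier reference can be dropped \emph{only because} $g\ge 3$ is in force. Note that the application in this paper is to $L'=L(x)$, $d'=d+1$, so for $n=2$ and $d$ even one lands exactly in the problematic numerical type; your assertion that ``the hypothesis $g\ge 2$'' makes the inequality strict would therefore break the argument in a case that actually arises.

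Two smaller points: your displayed bound omits the $-1$ from projectivising the extension space and conflates $\dim\mathrm{Ext}^{1}(\mathcal{G}'',\mathcal{G})$ with $h^{1}(\mathcal{G}''\otimes\mathcal{G}^{\vee})$; on a nodal curve these differ by $2\sum_{j}b_{j}(\mathcal{G})b_{j}(\mathcal{G}'')$, and one must also subtract $\sum_j b_j^2$ terms from the dimensions of the moduli of $\mathcal{G}$ and $\mathcal{G}''$ of fixed local type. You flag this issue qualitatively, but the relevant input is the $\mathrm{Ext}$ formula of Bhosle (\cite[Lemma 2.5(B)]{UB2}) rather than Theorem~\ref{thmcodim'}; as in the paper, the local-type corrections combine into $-\sum_j(b_j(\mathcal{G})-b_j(\mathcal{G}''))^2-\sum_j b_j(\mathcal{G})b_j(\mathcal{G}'')\le 0$, so they only help, and the binding constraint remains the genus inequality above.
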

\begin{proof}
	We briefly sketch the proof given in \cite{UB1}. Let $C$ be the complement in $U'_{L'}(n,d')$ of the set of (0,1)-stable vector bundles. A vector bundle $F\in C$ if and only if it has a torsion-free subsheaf $G$ of rank $r$ and degree $e$ satisfying $rd'>ne\geq r(d'-1)$. 
	This implies that the ranks and degrees of quotients of $F\in C$ are bounded. The closedness of $C$ follows from the properness of quot schemes. 
	
	Next we estimate the dimension of $C$ and show that it is strictly less than the dimension of $U_{L'}(n,d')$. One can see that to estimate the dimension of $C$ we need to estimate the dimension of the space of extensions   of $H$ by $G$, where $G$ is as above and $H$ is of rank $n-r$  and degree $d'- e$. 
	
	We may assume that $G$ and $H$ are stable, so that $h^0(Hom(H,G))=0$.
	
	\noindent Using the Riemann-Roch Theorem we have
	\begin{center}
		$\chi(F)=\deg(F)+(1-g){\rm rk}(F)$.
	\end{center} 
	From \cite[Lemma 2.5(B)]{UB2}, we have
	\begin{align*}
	\dim Ext^1(H,G) &= \dim H^1(Y,Hom(H,G))+ 2\sum_j b_j(G)b_j(H)\\
	&= (g-1)r(Hom(H,G))-d(Hom(H,G))+2\sum_j b_j(G)b_j(H)\\
	&= rd'-ne+ r(n-r)(g-1)+\sum_j b_j(G)b_j(H).
	\end{align*}
	
	\noindent Hence the dimension $\delta$ of the space of bundles of rank $n$ and degree $d'$ with a fixed determinant $L'$ obtained as an extension of $H$ by $G$ is
	\begin{small}
		\begin{align*}
		\delta 
		&\leq (r^2(g-1)+1-\sum_j b_j(G)^2)+((n-r)^2(g-1)+1-\sum_j b_j(H)^2)-g+ \dim Ext^1(H,G)-1	\\
		&=(g-1)(r^2+(n-r)^2+r(n-r)-1)+rd'-ne-\sum_j (b_j(G)-b_j(H))^2-\sum_j b_j(G)b_j(H)\\
		&\leq (g-1)(r^2+n^2-rn-1)+rd'-ne.
		\end{align*}
	\end{small}	
	\noindent For $C$ to be a proper closed subset, it suffices that the last expression is strictly less than dim $U_{L'}(n,d')=(n^2-1)(g-1)$ 
which is true when $rd'-ne<r(n-r)(g-1)$ and that indeed is the case here since $g\geq3$.
\end{proof}

\noindent We remark that the Lemma holds for any $n$ and $d'$ and the condition in \cite{UB1} that $(n,d'-1)=1$ is not required since we are considering curves with genus $g \geq 3$.

\section{Projective Poincar\'{e} Bundles}

  If $(n,d) = 1$, there is a Poincar\'{e} bundle on $Y\times U'_Y(n,d)$, unique up to tensoring by a line bundle on 
  $U'_Y(n,d)$. If $(n,d)\neq 1$, there is no Poincar\'{e} bundle on $Y\times U'_Y(n,d)$ 
(Corollary \ref{brgppf}, Theorem \ref{nonexistencethm}). However, we can define a 
projective Poincar\'{e} Bundle on $U_Y^{\prime s} (n,d)\times Y$ for any $n$ and $d$ with $n\ge 2$. 
Our construction is very similar to the construction in \cite{BBN}. 

     Grothendieck has proved that, for a positive integer $p$, the torsion-free quotients of 
$\mathcal{O}_Y^p$ which have a fixed Hilbert polynomial $P$ can be parametrised by the points of a projective algebraic scheme $Q$ (Quot Scheme). 
Moreover, there exists a universal quotient coherent sheaf $\mathcal{U}$ over $Y\times Q$. The group
$GL(p)$ acts on $Q$ as the group of automorphisms of $\mathcal{O}_Y^p$ and also on 
the sheaf $\mathcal{U}$. Furthermore, the action of $GL(p)$ 
on $Q$ goes down to an action of $PGL(p)$ on $Q$. However, this is not true for the action on $\mathcal{U}$,  
 the scalar matrix $\lambda(Id)$ acts on $\mathcal{U}$ by scalar multiplication by $\lambda$.

      $U_Y(n,d)$ can be seen as the geometric invariant theoretic (GIT) quotient of a Zariski open set $R_Y$ of $Q$ 
  by the action of $PGL(p)$. In general, $R_Y$ need not be irreducible or nonsingular, 
but the  subset $R'$ of $R_Y$ consisting of those $q$ for which $\mathcal{U}_q$ is locally 
free is irreducible, nonsingular, $PGL(p)$ invariant and open in $Q$ (\cite[Chapter-5]{N}).
Thus $U^{'s}_Y(n,d)$ can be realised as a quotient $\pi :  R^{'s} \to U^{'s}_Y(n,d)$. The universal quotient $\mathcal{U}$ restricts to a vector bundle $\mathcal{U}_{R^{'s}}$ on $Y\times R^{'s}$, 
such that $\mathcal{U}_{R^{'s}}\vert_{Y\times\{r\}}$ is the stable bundle corresponding to $\pi(r)$ for 
all $r\in R^{'s}$. The isotropy subgroup of $GL(p)$ at $r \in R^{'s}$ is Aut $\mathcal{U}_r \cong $ scalar matrices. 
It acts on $\mathcal{U}_r$ with $\lambda Id$ acting by 
multiplication by $\lambda$. Hence $PGL(p)$ acts on the associated projective 
bundle $P(\mathcal{U}_{R^{'s}})$. The quotient $\mathcal{PU}:= P(\mathcal{U}_{R^{'s}})/PGL(p)$ 
is then a projective bundle on $Y\times U^{'s}_Y(n,d)$ whose restriction to $Y\times\{E\}$ 
is isomorphic to $P(E)$ for all $E\in U^{'s}_Y(n,d)$.

\subsection*{The uniqueness of Projective Poincare Bundles}  
\begin{lemma}\label{uniqlem}
Let $\mathcal{E}$ be a vector bundle on $Y\times Z$ such that the restriction of $\mathcal{E}$ to $Y\times\{z\}$ is stable of rank $n$ and determinant $L$ for all $z\in Z$, and let $\psi_{\mathcal{E}}: Z\rightarrow U_Y'(n,d)$ be the corresponding morphism. Then the projective bundles $P(\mathcal{E})$ and $(id_Y\times\psi_{\mathcal{E}})^*(\mathcal{PU})$ are isomorphic. 
\end{lemma}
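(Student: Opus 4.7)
\medskip

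\noindent\textbf{Proof plan.} The strategy is to use the GIT construction of $\mathcal{PU}$ recalled just above and reduce to the standard fact that two families of stable bundles on $Y\times Z$ inducing the same classifying morphism to $U^{\prime s}_Y(n,d)$ differ by the pullback of a line bundle on $Z$. Since we only care about the associated projective bundles, such a twist is invisible, and the desired isomorphism can be obtained by faithfully flat descent along the principal $PGL(p)$-bundle $\pi\colon R^{\prime s}\to U^{\prime s}_Y(n,d)$.

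\medskip

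\noindent\textbf{Step 1: Lifting via the Quot scheme.} First I form the fiber product $\widetilde{Z}:=Z\times_{U^{\prime s}_Y(n,d)}R^{\prime s}$ using $\psi_{\mathcal{E}}$, which is a principal $PGL(p)$-bundle $q\colon\widetilde{Z}\to Z$, and let $\widetilde{\psi}\colon\widetilde{Z}\to R^{\prime s}$ be the second projection. On $Y\times\widetilde{Z}$ I then have two vector bundles: the pullback $(\mathrm{id}_Y\times q)^{*}\mathcal{E}$, and $\widetilde{\mathcal{U}}:=(\mathrm{id}_Y\times\widetilde{\psi})^{*}\mathcal{U}_{R^{\prime s}}$. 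By construction, both are families of stable vector bundles on $Y$ and they induce the same morphism $\widetilde{Z}\to U^{\prime s}_Y(n,d)$ (namely $\psi_{\mathcal{E}}\circ q$).

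\medskip

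\noindent\textbf{Step 2: Comparison up to a line bundle.} Since every fiber of either family is stable, its automorphism group is just $k^{*}$. The standard descent/rigidity argument (e.g.\ using $R^{0}p_{2*}\mathcal{H}om$ is a line bundle on $\widetilde{Z}$) gives a line bundle $M$ on $\widetilde{Z}$ together with an isomorphism
\begin{equation*}
\widetilde{\mathcal{U}}\;\cong\;(\mathrm{id}_Y\times q)^{*}\mathcal{E}\otimes p_{\widetilde{Z}}^{*}M.
\end{equation*}
Projectivising kills the twist by $M$, so
\begin{equation*}
P(\widetilde{\mathcal{U}})\;\cong\;(\mathrm{id}_Y\times q)^{*}P(\mathcal{E}).
\end{equation*}
On the other hand, the very definition $\mathcal{PU}=P(\mathcal{U}_{R^{\prime s}})/PGL(p)$ means that $(\mathrm{id}_Y\times\pi)^{*}\mathcal{PU}\cong P(\mathcal{U}_{R^{\prime s}})$ canonically as $PGL(p)$-equivariant projective bundles, and pulling back under $\widetilde{\psi}$ yields
\begin{equation*}
(\mathrm{id}_Y\times q)^{*}(\mathrm{id}_Y\times\psi_{\mathcal{E}})^{*}\mathcal{PU}\;\cong\;P(\widetilde{\mathcal{U}}).
\end{equation*}

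\medskip

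\noindent\textbf{Step 3: Descent along $q$.} Combining the two displayed isomorphisms,
$(\mathrm{id}_Y\times q)^{*}(\mathrm{id}_Y\times\psi_{\mathcal{E}})^{*}\mathcal{PU}\cong (\mathrm{id}_Y\times q)^{*}P(\mathcal{E})$ on $Y\times\widetilde{Z}$. To descend this to $Y\times Z$, I check that the isomorphism is $PGL(p)$-equivariant with respect to the fiberwise action of $PGL(p)$ on $\widetilde{Z}$. This equivariance is forced by the uniqueness clause: any two trivialisations of a projective bundle with only scalar automorphisms in the fiber agree after passing to $P(-)$, and both sides are by construction the pullbacks of $PGL(p)$-equivariant objects along the equivariant map $\widetilde{\psi}$. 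Since $q$ is faithfully flat (indeed a principal $PGL(p)$-bundle, hence a flat descent morphism), faithfully flat descent for projective bundles (equivalently, for their $PGL(n)$-torsors) then yields the required isomorphism $P(\mathcal{E})\cong(\mathrm{id}_Y\times\psi_{\mathcal{E}})^{*}\mathcal{PU}$ on $Y\times Z$.

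\medskip

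\noindent\textbf{Main obstacle.} The nontrivial point is not the existence of the comparison line bundle $M$ in Step 2, which is routine, but the equivariance/descent in Step 3: one must verify that the local isomorphisms obtained by choosing lifts of $\psi_{\mathcal{E}}$ patch together, i.e.\ are compatible with the $PGL(p)$-action used to build $\mathcal{PU}$. This is handled cleanly by working throughout with the single principal bundle $q\colon\widetilde{Z}\to Z$ instead of local trivialisations, and invoking descent for projective (equivalently Brauer–Severi) schemes along faithfully flat morphisms.
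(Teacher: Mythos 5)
Your argument is correct and is essentially the paper's own: the paper proves Lemma \ref{uniqlem} simply by citing \cite[Proposition 2.3]{BBN}, and that proof is exactly your fibre-product construction over $R^{'s}$, comparison of the two pulled-back families up to a line-bundle twist via stability, and descent of the resulting isomorphism of projective bundles along the principal $PGL(p)$-bundle. The equivariance in your Step 3 is indeed automatic because the comparison isomorphism comes from the canonical evaluation map of the line bundle $R^{0}p_{2*}\mathcal{H}om$, so fibrewise the discrepancy is a scalar and vanishes after projectivising.
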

\begin{proof}
     This can be proved exactly as \cite[Proposition 2.3]{BBN}.
\end{proof}

\begin{corollary}\label{uniqcor}
Suppose that $\pi': R^{'s}\rightarrow U^{'s}_Y(n,d)$ defines $U^{'s}_Y(n,d)$ as a quotient of $R^{'s}$ by a free action of $PGL(M')$ and that:
\begin{enumerate}
\item $\mathcal{E}_{R^{'s}}$ is a vector bundle on $Y\times R^{'s}$ such that $\mathcal{E}_{R^{'s}}|_{X\times\{r'\}}$ is the stable bundle $\pi'(r')$ for all $r'\in R^{'s}$;
\item the action of $PGL(M')$ lifts to $P(\mathcal{E}_{R^{'s}})$.
\end{enumerate}
Then $P(\mathcal{E}_{R^{'s}})/PGL(M')\cong\mathcal{PU}$.
\end{corollary}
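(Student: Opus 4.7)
The plan is to combine the universal property encoded in Lemma \ref{uniqlem} with equivariant descent along the free $PGL(M')$-action on $R^{'s}$.

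First I apply Lemma \ref{uniqlem} to the family $\mathcal{E}_{R^{'s}}$ on $Y\times R^{'s}$. By hypothesis (1), the classifying morphism of $\mathcal{E}_{R^{'s}}$ coincides with $\pi'$, so the lemma produces an isomorphism
$$\alpha :\ P(\mathcal{E}_{R^{'s}})\ \xrightarrow{\ \sim\ }\ (id_Y \times \pi')^{*}\mathcal{PU}$$
of projective bundles on $Y \times R^{'s}$. The target carries a tautological $PGL(M')$-equivariant structure, coming from the fact that $\pi'$ is $PGL(M')$-invariant, and its quotient is canonically $\mathcal{PU}$. The source carries the $PGL(M')$-equivariant structure provided by hypothesis (2). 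The corollary therefore reduces to showing that $\alpha$ can be chosen to be $PGL(M')$-equivariant, for then passing to quotients yields the claimed isomorphism on $Y \times U^{'s}_Y(n,d)$.

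The main obstacle is precisely this equivariance of $\alpha$. The key input is that, since the action of $PGL(M')$ on $R^{'s}$ is free with geometric quotient $U^{'s}_Y(n,d)$, the morphism $\pi'$ is a principal $PGL(M')$-bundle in the \'etale topology. It therefore suffices to produce the equivariant isomorphism \'etale-locally on $U^{'s}_Y(n,d)$. Pick an \'etale cover $\{V_i \to U^{'s}_Y(n,d)\}$ over which $\pi'$ admits sections $\sigma_i: V_i \to R^{'s}$. Each $\sigma_i$ pulls $\alpha$ back to an isomorphism of projective bundles $\sigma_i^{*}P(\mathcal{E}_{R^{'s}}) \cong \mathcal{PU}|_{Y\times V_i}$; on overlaps the two local identifications differ by a fibrewise automorphism of $\mathcal{PU}|_{Y\times(V_i\cap V_j)}$ induced by the transition cocycle of $\pi'$, and the freeness of the $PGL(M')$-action (i.e.\ trivial stabilisers) forces this cocycle to be trivial. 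The local identifications therefore glue, and after the corresponding global modification $\alpha$ is $PGL(M')$-equivariant.

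Taking the $PGL(M')$-quotient of the equivariant isomorphism $\alpha$ on both sides then gives the desired isomorphism
$$P(\mathcal{E}_{R^{'s}})/PGL(M')\ \cong\ \mathcal{PU}$$
of projective bundles on $Y \times U^{'s}_Y(n,d)$.
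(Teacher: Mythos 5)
Your core step --- applying Lemma \ref{uniqlem} with $Z=R^{'s}$ and $\mathcal{E}=\mathcal{E}_{R^{'s}}$, so that $\psi_{\mathcal{E}}=\pi'$ and $P(\mathcal{E}_{R^{'s}})\cong(id_Y\times\pi')^*\mathcal{PU}$ --- is exactly the paper's entire proof; the approach is the same.

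The extra descent discussion you add is where you slip. The assertion that ``the freeness of the $PGL(M')$-action (i.e.\ trivial stabilisers) forces this cocycle to be trivial'' is not correct as stated: freeness is what makes $\pi'$ a principal bundle and guarantees that the quotient of $P(\mathcal{E}_{R^{'s}})$ is again a projective bundle downstairs, but the transition cocycle of a principal bundle is of course not trivial in general, and trivial stabilisers have nothing to do with killing the discrepancy between your two local identifications. What actually rigidifies the situation is the simplicity of stable bundles: for $g\in PGL(M')$ the comparison map $\alpha\circ(g\cdot\alpha)^{-1}$ is an automorphism of $(id_Y\times\pi')^*\mathcal{PU}$ over $Y\times R^{'s}$ which on each slice $Y\times\{r'\}$ is induced by an automorphism of the stable bundle $E=\pi'(r')$; since $\mathrm{Aut}(E)=\mathbb{C}^*$ acts trivially on $P(E)$, this comparison is the identity fibrewise, hence the identity, and $\alpha$ is automatically equivariant. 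With that substitution (stability/simplicity in place of freeness) your argument is complete and coincides with the descent the paper leaves implicit.
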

\begin{proof}
Apply Lemma \ref{uniqlem} to by taking $Z=R^{'s}$ and $\mathcal{E}=\mathcal{E}_{R^{'s}}$.
\end{proof}

\begin{definition}
 $\mathcal{PU}$ is a projective bundle whose restriction to $Y\times\{E\}$ is isomorphic to $P(E)$ 
 for all $E\in U^{'s}_Y(n,d)$, and we call it the projective Poincar\'{e} bundle. 
 
 We call the restriction of  $\mathcal{PU}$ on  $Y\times U^{'s}_L(n,d)$ 
  the projective  Poincar\'{e} bundle over $Y\times U^{'s}_L(n,d)$ and denote it again by $\mathcal{PU}$.

\end{definition}

Note that by Theorem \ref{thmcodim'} and Theorem \ref{thmcodims}, the codimension of $U_L(n,d) - U^{'s}_L(n,d)$ is at least $2$. So we can define the notion
of degree on $U^{'s}_L(n,d)$ and we can talk about the stability of any projective bundle on  $U^{'s}_L(n,d)$. Similarly
$Y\times U^{'s}_L(n,d) \hookrightarrow Y\times U_L(n,d)$ via $Id\times i$ (where $i: U^{'s}_L(n,d) \hookrightarrow U_L(n,d)$ is the inclusion) as an open set of the projective variety $Y\times U_L(n,d)$ whose complement has codimension at least $2$. So we can talk about the degree and stability of a projective bundle on $Y\times U^{'s}_L(n,d)$.

In the next section, we will study the stability of  $\mathcal{PU}$ and  
$\mathcal{PU}_x:= \mathcal{PU}\vert_{x \times U^{'s}_L(n,d)}$ for $x \in Y_{reg}$.

\subsection{Stability of the restriction of projective Poincar\'{e} bundle to regular points on $Y$} \hfill

	Each point of $\mathcal{PU}_x$ corresponds to a pair $(E,\ell)$ where $E\in U^{'s}_L(n,d)$ 
	and $\ell \in{P}(E_x)$. Define,
	$$H_x= \{(E,\ell)\in \mathcal{PU}_x : F ~~~\mbox{defined in \eqref{El} is (0,1)-stable}\}.$$ 
	
	Let $p: H_x\rightarrow U^{'s}_L(n,d)$ and $q: H_x\rightarrow U'_{L(x)}(n,d+1)$ be defined 
	by $p(E,\ell)=E$ and $q(E,\ell)=F$ (defined in \eqref{El}) respectively.
	
\begin{lemma} \label{L3.5BBN}
	\begin{enumerate}
		\item The set $H_x$ is non-empty and Zariski open in $\mathcal{PU}_x$ and $q$ is a morphism.
		
		\item $p(H_x)$ is non-empty and Zariski open in $U^{'s}_L(n,d)$.
	\end{enumerate}
\end{lemma}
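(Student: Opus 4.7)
The plan is to handle the three claims in sequence: non-emptiness of $H_x$, openness of $H_x$ together with $q$ being a morphism, and finally openness of $p(H_x)$.

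For non-emptiness I would run the correspondence $(E,\ell)\leftrightarrow (F,\phi)$ backwards. Lemma \ref{01Open} guarantees a $(0,1)$-stable vector bundle $F$ of rank $n$ with $\det F = L(x)$. Any nontrivial $\phi\in P(F_x^*)$ then gives $E=E_\phi=\ker\phi$, stable by the lemma just preceding Lemma \ref{01Open}. Diagram \eqref{CD} identifies a line $\ell\subset E_x$ (the kernel of the induced map $E_x\to F_x$) so that $F$ is exactly the elementary modification of $(E,\ell)$ from \eqref{El}. Hence $(E,\ell)\in H_x$, which is therefore non-empty.

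For openness of $H_x$ and the morphism property of $q$, the projective Poincar\'e bundle $\mathcal{PU}$ is only locally a vector bundle, so I would work on a suitable (\'etale or Zariski) open $V\subset U^{'s}_L(n,d)$ over which one has a lift $\widetilde{E}$ on $Y\times V$. On $\mathcal{PU}_x|_V=P(\widetilde{E}_x)$ the tautological line subbundle together with the relative version of \eqref{El} produces a family $\widetilde{F}$ of rank-$n$ vector bundles on $Y\times \mathcal{PU}_x|_V$ with fibrewise determinant $L(x)$. The $(0,1)$-stability of members of such a family is an open condition, by the same relative Quot scheme / boundedness argument that underlies the proof of Lemma \ref{01Open}; this gives openness of $H_x|_V$ and hence of $H_x$. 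Since $(0,1)$-stable implies stable, $\widetilde{F}|_{H_x|_V}$ is a family of stable bundles with determinant $L(x)$, and the universal property of $U'_{L(x)}(n,d+1)$ yields a morphism $q|_V$. By uniqueness of the classifying map these glue to a global morphism $q$.

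For part (2), non-emptiness of $p(H_x)$ is immediate from (1) and I would handle openness by upper semicontinuity of fibre dimension. Setting $Z:=\mathcal{PU}_x\setminus H_x$ (closed) and $f:=p|_Z$, the projection $p$ is proper as a projective bundle projection, hence so is $f$. Each fibre $f^{-1}(E)$ is a closed subset of $p^{-1}(E)=P(E_x)\cong\mathbb{P}^{n-1}$, and irreducibility of $\mathbb{P}^{n-1}$ forces $\dim f^{-1}(E)=n-1$ if and only if $f^{-1}(E)=P(E_x)$, if and only if $E\notin p(H_x)$. Upper semicontinuity of fibre dimension for the proper morphism $f$ then shows the complement of $p(H_x)$ is closed, so $p(H_x)$ is open.

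The main obstacle will be the construction in the second paragraph: assembling the relative elementary modification $\widetilde{F}$ from a local lift of $\mathcal{PU}$, and promoting the argument in the proof of Lemma \ref{01Open} from the moduli-point level to family-level openness of $(0,1)$-stability. Once these are in hand the remaining steps (existence of the classifying morphism $q$, and the upper semicontinuity argument for part (2)) are formal.
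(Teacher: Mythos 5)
Your proposal is correct and follows essentially the same route as the paper: realize the elementary modification $(E,\ell)\mapsto F$ in a family over a space where an honest vector bundle lifting $\mathcal{PU}_x$ exists, invoke the openness of $(0,1)$-stability (the family-level version implicit in the proof of Lemma \ref{01Open}) to get that $H_x$ is non-empty and open and that $q$ is a morphism, and deduce (2) from (1). The only packaging differences are that the paper performs the construction $PGL(p)$-equivariantly on the Quot-scheme presentation $R^{'s}$, where the universal bundle $\mathcal{U}_{R^{'s}}$ exists globally, and then descends to the quotient --- which sidesteps your worry about local lifts of $\mathcal{PU}$ --- and that for (2) it simply uses that the projective-bundle projection is an open map instead of your properness/semicontinuity argument.
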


\begin{proof} The proof is essentially the same as in \cite[Lemma 3.5]{BBN},  we give an outline of it.
We can construct $U^{'s}_L(n,d)$ as a quotient $\pi: R^{'s} \rightarrow U'_L(n,d)$ and for a fixed regular point $x\in Y$ we have the bundle $P(\mathcal{U}_{R^{'s}})_x$ on $R^{'s}$. 
A point in ${P(\mathcal{U}_{R^{'s}})}_x$ corresponds to a pair $(E,\ell)$, where $E\in R'^s$ and $\ell \in P(E_x)$. 
Now corresponding to each such pair with $E$ in a Zariski open subset of $R^{'s}$,  we obtain an element $F\in U'_{L(x)}(n,d+1)$ from the exact sequence \eqref{El}. Thus a $PGL(p)$-invariant  open subset $V'$ of $P(\mathcal{U}_{R^{'s}})_x$ parametrises a family of elements of $U'_{L(x)}(n,d+1)$ and by the universal property of the moduli space, we get a morphism $q': V' \rightarrow U'_{L(x)}(n,d+1)$ which goes down to a morphism $q: V \to U'_{L(x)}(n,d+1)$. Let $H_x$ be the subset of $\mathcal{PU}_x$ for which $F$ corresponding to $(E,\ell)$ is $(0,1)$-stable. Then by Lemma \ref{01Open}, $H_x$ is non-empty and Zariski open in $\mathcal{PU}_x$. \\
By Part ($1$), $p(H_x)$ is non-empty. It is Zariski open since $p$ is the restriction of the projection morphism 
$P(\mathcal{U}_x) \to U^{'s}_L(n,d)$. 
\end{proof}

\begin{theorem}\label{Poincare}
	Let $Y$ be an integral projective nodal curve of arithmetic genus $g\geq 3$  defined 
	over an algebraically closed field. Let $n\geq 2$ be an integer and let $L$ be a 
	line bundle of degree $d$ on $Y$. Let $U_L'^s(n,d)$ denote the moduli space of stable vector bundles on $Y$ of rank $n$ and determinant $L$ and let $\mathcal{PU}$ 
	be the projective Poincar\'{e} bundle on $Y\times U_L'^s(n,d)$. Then $\mathcal{PU}_x$ is 
	stable for all $x\in Y_{reg}$ where $Y_{reg}$ is the set of all nonsingular points of $Y$.
\end{theorem}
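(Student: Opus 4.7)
The plan is to adapt the proof of \cite[Theorem 3.6]{BBN} for smooth curves to the nodal setting. The key ingredients are the morphism $\psi_{F,x}$ constructed in Section \ref{psiFx}, the structural Lemma \ref{subbundle}, and the codimension bounds of Theorems \ref{thmcodim'} and \ref{thmcodims}. These codimension bounds (which require $g \ge 3$) ensure that $U^{'s}_L(n,d)$ is a big open subset of $U_L(n,d)$, so the notion of stability of $\mathcal{PU}_x$ given in Definition \ref{defn} is well-defined.

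Assume for contradiction that $\mathcal{PU}_x$ is not stable, so there is a projective subbundle $\mathcal{PQ} \subset \mathcal{PU}_x$ on some Zariski open $Z \subset U^{'s}_L(n,d)$ with complement of codimension at least two, satisfying $\deg N \le 0$. Pulling back along the quotient map $\pi : R^{'s} \to U^{'s}_L(n,d)$ coming from the GIT construction, $\mathcal{PQ}$ lifts to a $PGL(p)$-equivariant vector subbundle $Q \subset (\mathcal{U}_{R^{'s}})_x$ defined over $\pi^{-1}(Z)$. It will suffice to show that the generic fibre $Q_E \subset E_x$ (for $E$ the point of $U^{'s}_L(n,d)$ corresponding to $r \in R^{'s}$) equals all of $E_x$, since this contradicts the properness of $\mathcal{PQ}$ inside $\mathcal{PU}_x$.

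Fix a general $E \in Z$. For each line $\ell \subset E_x$, the elementary transformation \eqref{El} produces a vector bundle $F = F_\ell$ of rank $n$ and determinant $L(x)$, together with a surjection $\phi : F \to k_x$ realising $E = E_\phi = \ker\phi$. By Lemma \ref{01Open}, the locus of $\ell \in \mathbf{P}(E_x)$ for which $F_\ell$ is $(0,1)$-stable is open and non-empty. For such $\ell$, Lemma \ref{psi} gives a closed embedding $\psi_{F_\ell,x} : \mathbf{P}(F_{\ell,x}^*) \hookrightarrow U^{'s}_L(n,d)$ whose image passes through $E$, and Lemma \ref{uniqlem} identifies $\psi_{F_\ell,x}^*\mathcal{PU}_x$ with $P(\mathcal{E}_x)$, where $\mathcal{E}_x$ is the rank $n$ bundle on $\mathbf{P}^{n-1}$ fitting in the exact sequence \eqref{exact}. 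The pullback $\psi_{F_\ell,x}^*\mathcal{PQ}$ then corresponds to a vector subbundle $W \subset \mathcal{E}_x$ with torsion-free quotient (extended across the codimension $\ge 2$ complement by taking reflexive hulls when $n \ge 3$; when $n=2$, $\mathbf{P}(F_{\ell,x}^*) = \mathbf{P}^1$ is a curve and $W$ is a line subbundle automatically).

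A degree comparison — carried out by choosing a complete intersection curve in $U^{'s}_L(n,d)$ contained in $\psi_{F_\ell,x}(\mathbf{P}^{n-1})$ — translates the destabilising inequality $\deg N \le 0$ into $\mu(W) \ge \mu(\mathcal{E}_x) = 0$. Lemma \ref{subbundle} then forces the inclusion $\mathcal{O}_{\mathbf{P}}(1) \subset W$. Unwinding the construction via diagram \eqref{CD} (compare the analysis of the map $\mathcal{E}_x \to i^*p^*F$ on the left column of \eqref{Diagram1}), the fibre of $\mathcal{O}_{\mathbf{P}}(1)$ at $\phi$ is precisely the kernel of $E_x \to F_x$, i.e.\ the line $\ell$ itself. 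The condition $\mathcal{O}_{\mathbf{P}}(1) \subset W$ therefore becomes $\ell \subset Q_E$. Letting $\ell$ range over the open dense subset of $\mathbf{P}(E_x)$ on which $F_\ell$ is $(0,1)$-stable, and using that $Q_E$ is a linear subspace of $E_x$, we conclude $Q_E = E_x$, the required contradiction. The main obstacle is making the degree comparison precise: the polarisation defining $\deg N$ on $U^{'s}_L(n,d)$ is not \emph{a priori} related to $\mathcal{O}_{\mathbf{P}^{n-1}}(1)$, so one must either choose the complete intersection curve to lie inside $\psi_{F_\ell,x}(\mathbf{P}^{n-1})$ and use that $\psi_{F_\ell,x}$ is a closed embedding, or appeal to polarisation-independence of this particular stability test (as in the analogous step of \cite{BBN}).
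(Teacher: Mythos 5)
Your overall route is the same as the paper's, just run in contrapositive form: you assume a destabilising subbundle, pull back along $\psi_{F,x}$, use Lemma \ref{subbundle} to force $\mathcal{O}_{\mathbf{P}}(1)\subset W$, identify the fibre of $\mathcal{O}_{\mathbf{P}}(1)$ at $\phi_E$ with the line $\ell$, and derive a contradiction; the paper instead picks a single pair $(E,\ell)$ with $\ell$ outside the fibre of the subbundle, concludes $\mathcal{O}_{\mathbf{P}}(1)\not\subseteq V'$, and applies the contrapositive of Lemma \ref{subbundle} to get $\mu(V')<\mu(\mathcal{E}_x)$ directly. These are logically equivalent, and all the supporting ingredients you cite (the $(0,1)$-stability locus, Lemma \ref{psi}, Lemma \ref{uniqlem}, the codimension bounds guaranteeing that $\psi_{F,x}^{-1}(Z)$ is big for a \emph{general} $F$) are exactly the ones the paper uses. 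One small point: since $Q_E$ is a proper linear subspace of $E_x$, a single good $\ell\not\subset Q_E$ already suffices; you do not need to sweep out all of $\mathbf{P}(E_x)$, and restricting to one general $\ell$ also sidesteps the need to check that $F_\ell$ is general (in the sense of Claim 2 of the paper) for a \emph{dense} set of $\ell$ at fixed $E$.

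The genuine gap is the degree comparison, which you correctly flag but do not resolve. Your primary suggestion --- choosing a complete intersection curve in $U^{'s}_L(n,d)$ contained in $\psi_{F,x}(\mathbf{P}(F_x^*))$ --- cannot work: degrees on $U^{'s}_L(n,d)$ are computed on a \emph{general} complete intersection curve for the chosen polarisation, and such a curve never lies inside the $(n-1)$-dimensional subvariety $\psi_{F,x}(\mathbf{P}(F_x^*))$ of the $(n^2-1)(g-1)$-dimensional moduli space, so no direct restriction-of-degree argument is available. The paper's mechanism is different and is where the hypothesis $g\ge 3$ earns its keep a second time: by Corollary \ref{coro1}, ${\rm Pic}\,U^{'s}_L(n,d)\cong\mathbb{Z}$ generated by $\theta_L$, so $c_1(\bar N)=\lambda_N\theta_L$ for an integer $\lambda_N$, and $\deg\bar N$ (with respect to any polarisation) is a positive constant times $\lambda_N$; on the other side, $\psi_{F,x}^*\theta_L\cong\mathcal{O}_{\mathbf{P}(F_x^*)}(r)$ with $r>0$ because $\psi_{F,x}$ is an embedding and $\theta_L$ is ample, whence $\deg\psi_{F,x}^*\bar N=r\lambda_N$. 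Thus $\deg N$ and $\deg\psi_{F,x}^*N$ have the same sign, which is exactly the implication your argument needs (and also explains why stability holds for \emph{every} polarisation). Without an argument of this kind, the step ``$\deg N\le 0$ implies $\mu(W)\ge\mu(\mathcal{E}_x)$'' is unsupported.
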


\begin{proof}
Let $P'$ be a projective subbundle of the restriction of $\mathcal{PU}_x$ to a Zariski open subset $Z$ of 
$U_L'^s(n,d)$ with a complement of codimension at least 2. Then as in the definition of stability for 
projective bundles, we have the following exact sequence 
 
 $
(4.1) \hspace{3.2 cm} 0 \rightarrow \mathcal{T}_{P'/Z}\rightarrow\mathcal{T}_{\mathcal{PU}_x|_{Z}/Z} \rightarrow N_{P'/\mathcal{PU}_x|_Z}\rightarrow 0
$

and we take $N=q_*(N_{P'/\mathcal{PU}_x|_Z})$. Our aim is to conclude $\deg N>0$.
	
	Claim 1: $p^{-1}(Z)$ is a Zariski open subset of $H_x$, and its complement $S$ has codimension at least 2. 
	
	Proof of Claim 1: From  Lemma \ref{L3.5BBN}, we have $p(H_x)$ is non-empty and open in $U_L'^s(n,d)$ which implies $\dim p(H_x)=\dim U_L'^s(n,d)$. Since $Z$ is an open subset of $U_L'^s(n,d)$ with its complement $Z^c$ of codimension $\ge 2$, $p(H_x) \cap Z$ is a non-empty open 
subset of $p(H_x)$ with  {\rm codim}$_{p(H_x)} (Z^c\cap p(H_x)) \geq 2$. Let $p_P: \mathcal{PU}_x \to U_L'^s(n,d)$ be the projection. Then 
$p_P^{-1}(Z^c) = p_P^{-1}(Z)^c$ has codimension $\ge 2$ in $\mathcal{PU}_x$ and hence its intersection $S := p^{-1}(Z)^c$ with the 
open set $H_x$ has codimension $\ge 2$ in $H_x$.   

	Claim 2: For a general $F$, $\psi_{F,x}^{-1}(Z)$ has complement of codimension $\ge 2$ in $P(F_x^*)$.
	
	Proof of Claim 2: Note first that,  since codim $S\geq 2$,  we get $\dim S\leq\dim H_x-2=\dim U_L'^s(n,d)+n-1-2=\dim U_L'^s(n,d)+n-3$.
	The image of $q$ is the subset of $U_{L(x)}'(n,d+1)$ consisting of $(0,1)$-stable vector bundles. Using Lemma ~\ref{01Open}, 
	we know that $q(H_x)$ is a non-empty open subset and hence has  $\dim q(H_x)=\dim U_{L(x)}'(n,d+1)=\dim U_L'^s(n,d)$.
	For a general $F$, we have $\dim(S \cap q^{-1}(F))=\dim q^{-1}(F)+\dim S-\dim H_x \leq \dim q^{-1}(F)-2= n-1-2=n-3$ 
	since the intersection is proper. 
	
             The fibre of $q$ over $F$ is $P(F_x^*)$. The restriction of $p$ to this fibre is $\psi_{F,x}$. Thus $q$ maps the fibre over $F$ 
  isomorphically onto $\psi_{F,x} (P(F_x^*))$. For a general $F$, we have $S \cap q^{-1}F = S \cap P(F_x^*) 
  = P(F_x^*) - \psi_{F,x}^{-1}(Z)$ has dimension  $\le n-3$, i.e., codimension $\ge 2$ in $P(F_x^*)$. 
  	
The complement of $P'$ in $\mathcal{PU}_x|_{Z}$ is open in $\mathcal{PU}_x$ as $\mathcal{PU}_x|_{Z}=p^{-1}(Z)$ and $Z$ is open. 
Since $H_x$ is open in $\mathcal{PU}_x$ and $\mathcal{PU}_x$ is irreducible, $H_x\cap (P')^c \neq \emptyset$. 
For $(E,\ell) \in H_x\cap (P')^c$, $\ell \subset E_x$ is not in the fibre of $P'$ at $x$. 
	
	\noindent It follows from Lemma ~\ref{uniqlem} that $(id_Y\times\psi_{F,x})^*(\mathcal{PU})\cong P(\mathcal{E})$, 
where $\mathcal{E}$ is as defined in \eqref{SESfamily}.\\
	As remarked in \cite[Remark 2.2]{BBN}, there exists a vector subbundle $V'$ of $\mathcal{E}'_x:= 
\mathcal{E}_x\vert_{\psi_{F,x}^{-1}(Z)}$ such that $\psi_{F,x}^*(P')\cong P(V')$. It follows that, if $N$ is defined as 
in Definition \ref{defn}, we have $\psi_{F,x}^*N\cong V'^*\otimes(\mathcal{E}'_x/V')$. 
	
	Claim 3: The condition on $\ell$ gives us that the line subbundle 
$\mathcal{O}_{\mathbf{P}}(1)\vert_{\psi_{F,x}^{-1}(Z)}$ of $\mathcal{E}_x'$ is such that 
$\mathcal{O}_{\mathbf{P}}(1)\vert_{\psi_{F,x}^{-1}(Z)}\nsubseteq V'$.
	
	Proof of Claim 3:  Let $\phi_E$ be the inverse image of $E$ under $\psi_{F,x}$. i.e., we have
	\begin{center}
		\begin{eqnarray}
		\begin{tikzcd}
		&  0\arrow{d} &  0\arrow{d} & & \\
		& F(-x)\arrow{d}\arrow[equal]{r}& F(-x)\arrow{d}& & \\
		0\arrow{r}& E=\ker\phi_E\arrow{r}\arrow{d} & F\arrow{r}{\phi_E}\arrow{d} & k_x\arrow[equal]{d}\arrow{r} & 0\\
		0\arrow{r} & E_x/\ell\arrow{r}\arrow{d}& F_x\arrow{r}{\phi_E}\arrow{d} & k_x\arrow{r}& 0\\
		& 0 &  0 & & 
		\end{tikzcd}
		\end{eqnarray}
	\end{center}
	
	It is enough to show that $\ell$ is the fibre of $\mathcal{O}_{{P}(F_x^*)}(1)$ at $\phi_E$. 
	
	$(\mathcal{O}_{{P}(F_x^*)}(1))_{\phi_E}=V_{\phi_E}^*$ where $V_{\phi}$ denote the 1-dimensional subspace generated 
by $\phi$ in $F_x^*$. We know that, for $v\in F_x$, $v^{**}(\phi_E)=\phi_E(v)$. Let $v^{**}\neq0$ be in the fibre of 
$(\mathcal{O}_{{P}(F_x^*)}(1))$ at $\phi_E$. This implies $\phi_E(v)\neq 0$. i.e, $v\notin Im(E_x/\ell\rightarrow F_x)$ 
and hence $v$ corresponds to the line $\ell$.
		
	\noindent It follows from Lemma ~\ref{subbundle} that 
	\begin{equation*}
	\frac{\deg V'}{\text{rk}\hspace{2pt} V'}<\frac{\deg\mathcal{E}_x}{\text{rk}\hspace{2pt} \mathcal{E}_x},
	\end{equation*}
	and we get $\deg \psi_{F,x}^*N>0$. Let $\bar{N}=i_*(N)$ where $i: U'_L(n,d)\hookrightarrow U_L(n,d)$ is the inclusion, then
$\deg N =\deg\bar{N}$ so that 
$$\deg \psi_{F,x}^*\bar{N} > 0\, .$$
 Since $\theta_L$ is an ample line bundle, by Lemma \ref{psi}, $\psi_{F,x}^*\theta_L$ is an ample line bundle on  
$P(F_x^*)$.  As Pic ${P}(F_x^*) \cong \mathbb{Z}$ and it is generated by $\mathcal{O}_{P(F_x^*)}(1)$, it follows that 
$\psi_{F,x}^*\theta_L$ is isomorphic to $\mathcal{O}_{P(F_x^*)}(r)$ for some $r>0, r \in\mathbb{Z}$. 

        We have $c_1(\bar{N}) = \lambda_N \theta_L, \lambda_N \in \mathbb{Z}.$ Let $u$ denote the dimension of $U_L(n,d)$. Then  
$$\deg\bar{N} = [c_1(\bar{N}).\theta_L^{u-1}](U_L(n,d)) = \lambda_N \theta_L^u(U_L(n,d))= \lambda_N C_U, $$
$C_U$ being a positive constant. We have 
$c_1(\psi_{F,x}^*\bar{N})= \lambda_{\psi_{F,x}^* \bar{N}} \mathcal{O}_{\mathbf{P}(F_x^*)}(1)$ so that 
$$\deg(\psi_{F,x}^*\bar{N}) = \lambda_{\psi_{F,x}^* \bar{N}} > 0\, .$$  
Since $\psi_{F,x}$ has degree $1$, one has $c_1(\psi_{F,x}^*\bar{N})= \psi_{F,x}^*(c_1(\bar{N}))$. 
Now $$\psi_{F,x}^*(c_1(\bar{N})) = \psi_{F,x}^*(\lambda_N \theta_L)= \lambda_N \psi_{F,x}^*(\theta_L) 
= \lambda_N r \mathcal{O}_{P(F_x^*)}(1)$$ 
so that $(\deg \psi_{F,x})^*(c_1(\bar{N})) = r \lambda_N.$ Hence $r \lambda_N =  \lambda_{\psi_{F,x}^* \bar{N}} >0$. 
Therefore $\deg\bar{N} > 0$.
\end{proof}

\begin{theorem}
	Let $Y$ be an integral projective nodal curve of arithmetic genus $g\geq 3$ defined 
	over an algebraically closed field. Let $n\geq 2$ be an integer and let $L$ be a 
	line bundle of degree $d$ on $Y$. Let $U_L'^s(n,d)$ denote the moduli space of stable vector bundles on $Y$ of rank $n$ and determinant $L$ and let $\mathcal{PU}$ 
	be the projective Poincar\'{e} bundle on $Y\times U_L'^s(n,d)$. Let $\eta$ and $\theta_L$ be divisors 
	defining the polarisation on $Y$ and $U_L'^s(n,d)$ respectively. Then $\mathcal{PU}$ is stable 
	with respect to $a\eta+b\theta_L$, $a,b>0$.
\end{theorem}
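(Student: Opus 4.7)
The plan is to combine Theorem \ref{Poincare} with the slope-stability of each individual $E\in U_L^{'s}(n,d)$ by decomposing $\deg \bar{N}$ along the two factors of $Y\times U_L^{'s}(n,d)$. I begin by letting $P'$ be a projective subbundle of $\mathcal{PU}$ over a Zariski open $Z\subset Y\times U_L^{'s}(n,d)$ with complement of codimension at least $2$, building the normal bundle quotient $N$ on $Z$, and extending it to a torsion-free sheaf $\bar{N}=\iota_*N$ on $Y\times U_L^{'s}(n,d)$ as in Definition \ref{defn}. Set $M:=\dim U_L^{'s}(n,d)=(n^2-1)(g-1)$. Since $Y$ is a curve, the pullback of $\eta$ satisfies $\eta^2=0$ on the product, so
\[
(a\eta+b\theta_L)^M=b^M\,\theta_L^M+Mab^{M-1}\,\eta\cdot\theta_L^{M-1},
\]
and hence
\[
\deg\bar{N}=b^MA+Mab^{M-1}B,\qquad A:=c_1(\bar{N})\cdot\theta_L^M,\quad B:=c_1(\bar{N})\cdot\eta\cdot\theta_L^{M-1}.
\]

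Next I would interpret $A$ and $B$ as slicewise normal degrees. A zero-cycle representing $\theta_L^M$ can be taken to be a finite set of very general points $\{E_j\}\subset U_L^{'s}(n,d)$, and a dimension count using $\dim Z^c\le M-1$ and $\dim Y=1$ shows that for general $E_j$ the whole slice $Y\times\{E_j\}$ lies in $Z$, giving $A=\sum_j\deg\bigl(\bar{N}|_{Y\times\{E_j\}}\bigr)$. Similarly $B=\deg\bigl(\bar{N}|_{\{x\}\times U_L^{'s}(n,d)}\bigr)$ with respect to $\theta_L$, for a general $x\in Y_{\mathrm{reg}}$; for such $x$ the complement of $Z\cap(\{x\}\times U_L^{'s}(n,d))$ has codimension at least $2$. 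The restriction $P'|_{\{x\}\times U_L^{'s}(n,d)}$ is a proper projective subbundle of $\mathcal{PU}_x$ (since $P'$ has constant fibre dimension $<n-1$), so Theorem \ref{Poincare} delivers $B>0$ directly. For $A$, since $\mathrm{Br}(Y)=0$, I can write $P'|_{Y\times\{E_j\}}=\mathbb{P}(F_j)$ for some rank-$r$ vector bundle on $Y$, and untwisting by a suitable line bundle realises this as a rank-$r$ subbundle $S_j\subset E_j$. A standard normal bundle computation in the projective bundle $\mathbb{P}(E_j)$ then gives
\[
\deg\bigl(\bar{N}|_{Y\times\{E_j\}}\bigr)=r\deg E_j-n\deg S_j=nr\bigl(\mu(E_j)-\mu(S_j)\bigr)>0,
\]
since $E_j$ is stable on the nodal curve $Y$. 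Summing over $j$ yields $A>0$, and combining with $B>0$ and $a,b>0$ gives $\deg\bar{N}>0$.

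The main obstacle I anticipate is the careful handling of the codimension-$2$ exceptional locus $Z^c$ in the slicing step. One must verify that $c_1(\bar{N})$ restricted to $\{x\}\times U_L^{'s}(n,d)$ really computes $c_1$ of the pushed-forward normal bundle of $P'|_{\{x\}\times U_L^{'s}(n,d)}$ inside $\mathcal{PU}_x$ (so that Theorem \ref{Poincare} applies at face value), and similarly for the vertical slices $Y\times\{E_j\}$ (so that the slope-stability computation applies). This reduces to a flat base-change statement for $p'_*$ along the smooth slices, combined with the generic dimension bounds ensuring that the relevant slice avoids $Z^c$ in codimension $\ge 2$; it is a routine but essential verification, without which the splitting $\deg\bar{N}=b^MA+Mab^{M-1}B$ and the subsequent invocation of Theorem \ref{Poincare} are not rigorous.
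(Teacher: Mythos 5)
Your proposal is correct and follows essentially the same route as the paper's proof: expand $(a\eta+b\theta_L)^{\dim}$ using $\eta^2=0$, reduce $\deg\bar N$ to the two slice contributions $c_1(\bar N)\cdot\eta\cdot\theta_L^{M-1}$ (positive by Theorem \ref{Poincare} applied to $\mathcal{PU}_x$ for general $x$) and $c_1(\bar N)\cdot\theta_L^{M}$ (positive because $P'|_{Y\times\{E\}}=P(S)$ for a subbundle $S\subset E$ and $r\deg E-n\deg S>0$ by stability of $E$), after checking via the dimension count that general slices $Y\times\{E\}$ lie in $Z$ and that $\{x\}\times(Z\cap U_L^{'s}(n,d))$ has small complement. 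Your write-up is somewhat more explicit than the paper's (e.g.\ the identification of the coefficients and the $Br(Y)=0$ justification for the curve slices), but the decomposition and both positivity inputs coincide with the paper's argument.
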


\begin{proof}
	Suppose $Z\subset Y\times U_L'^s(n,d)$ is a Zariski open subset with a complement of codimension at least 
	$2$ and $P'$ a projective subbundle of the restriction of $\mathcal{PU}$ to $Z$. 
	
	Claim: For general $E\in U_L'^s(n,d)$, $Y\times\{E\}\subset Z$.
	
	The codimension of $Z^c$ in $Y\times U_L'^s(n,d)$ is at least $2$, implies that
	$$\dim(Z^c)\leq \dim(U_L'^s(n,d))-1.\hspace{3 cm} (\ast)$$
	
	Consider the  projection map $p: Y\times U_L'^s(n,d) \to  U_L'^s(n,d)$. 
	
	For all  $E\in p(Z^c)^c \subset U_L'^s(n,d)$, we have  $Y\times \{E\}\subset Z$.
	To show  this holds for general $ E\in U_L'^s(n,d)$, it is enough to show that $\dim p(Z^c)< \dim U_L'^s(n,d)$ which 
	follows from $\ast$.
	
	It is also easy to see that, for general $x\in Y$, the codimension of the complement of $\{x\}\times (U_L'^s(n,d)\cap Z)$ 
	in $\{x\}\times U_L'^s(n,d)$ is at least $2$.

	Let $N$ be as in Definition \ref{defn}. By applying Theorem \ref{Poincare},  we  have $\deg N_x>0$ for $x\in Y_{reg}$. 
	Also stability of $E$ implies that $\deg N|_{Y\times\{E\}}>0$.
	
	Let $\bar{N}=(Id\times i)_*N$. So by definition of degree we get, 
		$\deg N=\deg\bar{N}$. Since the codimension of the complement of $\{x\}\times (U_L'^s(n,d)\cap Z)$ 
	in $\{x\}\times U_L'^s(n,d)$ is at least $2$,  $\deg N_x= \deg\bar{N}_x $ for a general $x$.
	
	Now, if $u$ is the dimension of $U_L(n,d)$ then 
	\begin{align*}
	 \deg N=\deg \bar{N} &=[c_1(\bar{N})\cdot(a\alpha+b\theta_L)^u](Y\times U_L(n,d)) & \\
	&= [c_1(\bar{N})\cdot(\lambda\alpha.\theta_L^{u-1}+\mu\theta_L^u)](Y\times U_L(n,d)) & \text{for some }\lambda,\mu>0 \\
	&=\lambda\alpha(Y)\cdot\deg\bar{N}_x+\mu\deg (N|_{Y\times\{E\}})\cdot\theta_L^u(U_L(n,d)) & \\
	&>0
	\end{align*}	
\end{proof}

\section{Projective Picard bundle}

The construction of the projective Picard bundle follows exactly as in \cite{BBN}.
We outline it here for the sake of completeness.

           Recall that  $U_Y'^s(n,d)$ is a GIT quotient of $R'^s$ by $PGL(M)$, let $\pi: R'^s\rightarrow U_Y'^s(n,d)$ be the quotient map.  
  There is a (locally universal) vector bundle  $\mathcal{E}_{R'^s} \to Y \times R'^s$. 
  Let $p_{R'^s}: Y \times R'^s \rightarrow R'^s $ denote the projection to the second factor. Then ${p_{R'^s}}_{*}\mathcal{E}_{R'^s}$ is a torsion-free sheaf on $R'^s$ and is non-zero if and only if $d > n(g-1)$ \cite[Theorem 1.3]{UB1}. For $d \geq 2n(g-1), n \ge 2$, the set 
\begin{equation*}
\{r\in R'^s \vert ~H^1(Y,\mathcal{E}_{R'^s}|_{Y\times\{r\}})=0\}
\end{equation*}
equals $R'^s$ and ${p_{R'^s}}_{*}\mathcal{E}_{R'^s}$ is a vector bundle on $R'^s$.  
The projective Picard bundle on $U_Y'^s(n,d)$ is defined to be the quotient of $P({p_{R'^s}}_*(\mathcal{E}_{R'^s}))$ 
by the action of $PGL(M)$ on it and is denoted by $\mathcal{PW}$.

\begin{lemma}\label{uniqlem2}
	Let $\mathcal{E}$ be a vector bundle on $Y\times Z$ such that the restriction of $\mathcal{E}$ to $Y\times\{z\}$ is stable of rank $n$ and determinant $L$ for all $z\in Z$, and let $\psi_{\mathcal{E}}: Z\rightarrow U_Y'^s(n,d)$ be the corresponding morphism. Also suppose that $H^1(Y,\mathcal{E}|_{Y\times\{z\}})=0$ for all $z\in Z$. Then the projective bundles $P({p_Z}_*\mathcal{E})$ and $\psi_{\mathcal{E}}^*(\mathcal{PW})$ are isomorphic.
\end{lemma}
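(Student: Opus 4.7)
The plan is to mirror the argument of Lemma \ref{uniqlem}, but carry out the comparison one level down on the parameter space $R'^s$, then push forward to $Z$ and descend. Form the fibered product
\begin{equation*}
\tilde{Z} \;=\; Z \times_{U_Y^{'s}(n,d)} R'^s,
\end{equation*}
with projections $\tilde{\psi}: \tilde{Z} \to R'^s$ and $\tilde{\pi}: \tilde{Z} \to Z$. Since $\pi : R'^s \to U_Y^{'s}(n,d)$ is a principal $PGL(M)$-bundle, so is $\tilde{\pi}$, while $\tilde{\psi}$ is $PGL(M)$-equivariant. The two vector bundles $(id_Y \times \tilde{\psi})^* \mathcal{E}_{R'^s}$ and $(id_Y \times \tilde{\pi})^* \mathcal{E}$ on $Y \times \tilde{Z}$ restrict to the same stable bundle on every slice $Y \times \{\tilde{z}\}$. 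Since a stable bundle admits only scalar automorphisms, a seesaw-type argument produces a line bundle $L_{\tilde{Z}}$ on $\tilde{Z}$ together with an isomorphism
\begin{equation*}
(id_Y \times \tilde{\psi})^* \mathcal{E}_{R'^s} \;\cong\; (id_Y \times \tilde{\pi})^* \mathcal{E} \otimes p_{\tilde{Z}}^* L_{\tilde{Z}}.
\end{equation*}

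Next push forward along $p_{\tilde{Z}} : Y \times \tilde{Z} \to \tilde{Z}$. The hypothesis $H^1(Y, \mathcal{E}|_{Y \times \{z\}}) = 0$ for all $z \in Z$, together with the corresponding vanishing for $\mathcal{E}_{R'^s}$ valid in the range $d \ge 2n(g-1)$ in which $\mathcal{PW}$ is defined, makes cohomology and base change applicable on both sides. One obtains
\begin{equation*}
\tilde{\psi}^* \bigl({p_{R'^s}}_* \mathcal{E}_{R'^s}\bigr) \;\cong\; \tilde{\pi}^* \bigl({p_Z}_* \mathcal{E}\bigr) \otimes L_{\tilde{Z}}.
\end{equation*}
Projectivising annihilates the line bundle twist and yields $\tilde{\psi}^* P({p_{R'^s}}_* \mathcal{E}_{R'^s}) \cong \tilde{\pi}^* P({p_Z}_* \mathcal{E})$. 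By the very definition of $\mathcal{PW}$ as the $PGL(M)$-quotient of $P({p_{R'^s}}_* \mathcal{E}_{R'^s})$, the left-hand side is canonically $\tilde{\pi}^* \psi_{\mathcal{E}}^* \mathcal{PW}$.

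Finally, descend along the faithfully flat $PGL(M)$-torsor $\tilde{\pi}: \tilde{Z} \to Z$. The right-hand side $\tilde{\pi}^* P({p_Z}_* \mathcal{E})$ carries its canonical trivial descent datum, while $\tilde{\pi}^* \psi_{\mathcal{E}}^* \mathcal{PW}$ inherits the descent datum coming from the genuine $PGL(M)$-action on $P({p_{R'^s}}_* \mathcal{E}_{R'^s})$ (the scalar ambiguity in the $GL(M)$-action on ${p_{R'^s}}_* \mathcal{E}_{R'^s}$ disappears upon projectivising). These two descent data are compatible with the displayed isomorphism because the scalar twist is absorbed into $L_{\tilde{Z}}$, and faithfully flat descent then produces the desired isomorphism $\psi_{\mathcal{E}}^* \mathcal{PW} \cong P({p_Z}_* \mathcal{E})$ on $Z$. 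The main technical obstacle is this last step: verifying that the isomorphism on $\tilde{Z}$ respects both descent data simultaneously is the same bookkeeping that underlies \cite[Proposition 2.3]{BBN} and Lemma \ref{uniqlem}.
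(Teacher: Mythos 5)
Your argument is correct and is essentially the standard torsor-plus-descent proof that the paper itself outsources: its entire proof of this lemma is a citation to \cite[Proposition 4.2]{BBN}, whose argument (pull back to $Z\times_{U_Y^{'s}(n,d)}R'^s$, compare with the locally universal bundle up to a line-bundle twist via simplicity of stable bundles, push forward using the $H^1$-vanishing and cohomology and base change, projectivise, and descend along the $PGL(M)$-torsor) is exactly what you have written out. So you have supplied, correctly, the details the paper omits by reference, with no genuinely different route taken.
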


\begin{proof}
	See \cite[Proposition 4.2]{BBN} for a proof.
\end{proof}

An analogue of Corollary \ref{uniqcor} shows that $\mathcal{PW}$ is independent of the choice of $R'$ and $\pi$. Its restriction $\mathcal{PW}|_{U_L'^s(n,d)}$ will be called the projective Picard bundle on $U_L'^s(n,d)$.

\subsection{Stability of the projective Picard bundle}

\begin{theorem} \label{Picard}
	Let $Y$ be an integral projective nodal curve of arithmetic genus $g\geq 3$ 
	defined over an algebraically closed field. Let $n\geq2$ be 
	an integer and let $L$ be a line bundle of degree $d$ on $Y$. 
	Let $U_L'^s(n,d)$ denote the moduli space of stable vector bundles 
	on $Y$ of rank $n$ and determinant $L$, suppose further that $d\ge 2n(g-1)$. 
	Then the projective Picard bundle $\mathcal{PW}|_{U_L'^s(n,d)}$ is stable.
\end{theorem}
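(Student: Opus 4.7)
The plan is to adapt the strategy from Theorem \ref{Poincare} and follow \cite[\S 4]{BBN}. Let $P' \subset \mathcal{PW}|_{Z}$ be a projective subbundle over a Zariski open $Z \subset U_L'^s(n,d)$ whose complement has codimension at least $2$, and set $N = p'_{*}(N_{P'/\mathcal{PW}|_Z})$ as in Definition \ref{defn}; the aim is to prove $\deg N > 0$ with respect to $\theta_L$. First I would pull the situation back to a projective space via $\psi_{F,x}$: pick a general $(0,1)$-stable bundle $F$ with $\det F = L(x)$ (non-emptiness comes from Lemma \ref{01Open}), yielding the embedding $\psi_{F,x}: \mathbb{P}(F_x^*) \to U_L'^s(n,d)$ from Lemma \ref{psi}. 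By the same dimension count as Claims 1 and 2 in the proof of Theorem \ref{Poincare}, using the auxiliary morphisms $p: H_x \to U_L'^s(n,d)$ and $q: H_x \to U_{L(x)}'(n,d+1)$, the complement of $\psi_{F,x}^{-1}(Z)$ in $\mathbb{P}(F_x^*)$ has codimension at least $2$ for generic $F$.

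Next I would identify $\psi_{F,x}^{*}(\mathcal{PW})$ explicitly. The hypothesis $d \ge 2n(g-1)$ ensures $H^1(Y,E_\phi) = 0$ for every stable $E_\phi$ in the family \eqref{SESfamily}, so pushing that sequence forward along the projection $p_{\mathbf{P}}: Y \times \mathbf{P} \to \mathbf{P}$ yields the exact sequence
\begin{equation*}
0 \longrightarrow V \longrightarrow H^0(Y,F) \otimes \mathcal{O}_{\mathbf{P}} \longrightarrow \mathcal{O}_{\mathbf{P}}(1) \longrightarrow 0,
\end{equation*}
where $V := (p_{\mathbf{P}})_{*}\mathcal{E}$ is a vector bundle of rank $h^0(Y,F)-1$ and degree $-1$. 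Lemma \ref{uniqlem2} then gives $\psi_{F,x}^{*}(\mathcal{PW}) \cong \mathbb{P}(V)$, so $P'$ pulls back to $\mathbb{P}(V')$ for some vector subbundle $V' \subseteq V|_{\psi_{F,x}^{-1}(Z)}$. A standard computation identifies $\psi_{F,x}^{*} N$ (up to a positive combinatorial factor) with $(V')^{*} \otimes (V/V')$, so $\deg \psi_{F,x}^{*} N$ has the sign of $\mathrm{rk}(V)\deg V' - \mathrm{rk}(V')\deg V$, and the problem reduces to showing $\mu(V') < \mu(V) = -1/(h^0(Y,F)-1)$.

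The main obstacle will be this slope inequality, because $V$ is not stable in general: since $V$ sits inside a trivial bundle one has $\deg V' \le 0$, and the required inequality can fail only when $\deg V' = 0$, which forces $V'$ to be the trivialization of a subspace $W \subseteq H^0(Y,F(-x))$. Ruling this out when $V'$ actually comes from a projective subbundle is the analog of Lemma \ref{subbundle}, and is carried out as in \cite[Theorem 4.3]{BBN}: the $PGL(p)$-equivariance combined with the fact that such a $W$ would be independent of both $\phi \in \mathbb{P}(F_x^{*})$ and the general $F$ is incompatible with the codimension bounds established in the first step. Once $\deg \psi_{F,x}^{*} N > 0$ is in hand, the transfer to $U_L'^s(n,d)$ is identical to the final paragraph of the proof of Theorem \ref{Poincare}: set $\bar{N} = i_{*}N$, write $c_1(\bar{N}) = \lambda_N \theta_L$, and use the ampleness of $\theta_L$, the isomorphism $\mathrm{Pic}\,\mathbb{P}(F_x^{*}) \cong \mathbb{Z}$, the fact that $\psi_{F,x}^{*}\theta_L$ is a positive multiple of $\mathcal{O}_{\mathbf{P}}(1)$, and $\deg \psi_{F,x} = 1$ to conclude $\lambda_N > 0$ and hence $\deg N > 0$.
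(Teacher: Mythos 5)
Your strategy coincides with the paper's: pull back along $\psi_{F,x}$ for a general $(0,1)$-stable $F$, identify $\psi_{F,x}^*\mathcal{PW}$ with $P(V)$ where $V=(p_{\mathbf{P}})_*\mathcal{E}$ sits in $0\to V\to H^0(Y,F)\otimes\mathcal{O}_{\mathbf{P}}\to\mathcal{O}_{\mathbf{P}}(1)\to 0$ (the second row of Diagram \ref{Diagram2}), reduce to a slope inequality for the subbundle $V'$ with $P(V')=\psi_{F,x}^*P'$, and transfer positivity back to $U_L'^s(n,d)$ exactly as at the end of Theorem \ref{Poincare}. Your reduction to the single bad case $\deg V'=0$, which forces $V'=W\otimes\mathcal{O}$ with $W\subseteq H^0(Y,F(-x))$, is correct and is the same dichotomy the paper phrases as ``the image of $V'$ in $\Omega^1_{Z}(1)$ is non-zero,'' using the column $0\to H^0(Y,F(-x))\otimes\mathcal{O}_{Z}\to V\to\Omega^1_{Z}(1)\to 0$. (Minor slip: $\deg\psi_{F,x}^*N=\mathrm{rk}(V')\deg V-\mathrm{rk}(V)\deg V'$, the negative of the expression you wrote, but your target inequality $\mu(V')<\mu(V)$ is the right one.)

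The one genuine gap is your justification for excluding $W\subseteq H^0(Y,F(-x))$. The codimension bounds from the first step play no role here, and $PGL(p)$-equivariance is not the mechanism either, so as written this step does not close. The paper's argument (following \cite{BBN}) is: the fibre $V'_E\subseteq H^0(Y,E)$ depends only on $E$, since $P'$ is a subbundle of $\mathcal{PW}$ over the moduli space itself and not merely of its pullback to $\mathbf{P}(F_x^*)$; a non-zero $v\in V'_E$ gives a non-zero section $s\in H^0(Y,E)$, and because $H^0(Y,F(-x))=\{s\in H^0(Y,E)\,:\,s(x)\in\ell\}$, where $\ell\subset E_x$ is the line through which $F$ is built from $E$ in \eqref{El}, one may choose $x$ with $s(x)\neq 0$ and then a general line $\ell$ with $s(x)\notin\ell$, so that $s\notin H^0(Y,F(-x))$. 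In other words, the contradiction comes from the freedom to vary the elementary modification $(x,\ell)$ while the fibre $V'_E$ stays fixed --- not from any dimension count. You correctly sensed that the independence of $W$ from the choice of $F$ is the key point, but you need to run this genericity-of-$\ell$ argument (the true analogue of Lemma \ref{subbundle} in this setting) to finish.
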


\begin{proof}
The proof is exactly on the same lines as in the case when $Y$ is smooth  \cite{BBN}.  
Let $P'$ be a projective subbundle of the restriction of $\mathcal{PW}$ to 
some Zariski-open subset $Z'$ of $U_L'^s(n,d)$ with a complement of codimension 
at least 2. For a nonsingular point $x\in Y$ and a $(0,1)$-stable vector bundle $F$ 
of rank $n$ and determinant $L(x)$, we have a morphism, 
$\psi_{F,x}:{P}(F_x^*)\rightarrow U_L'^s(n,d)$ (see Section \ref{psiFx}).  
For a general $F$, the complement of $Z := \psi_{F,x}^{-1}(Z')$ has codimension at least 
$2$. By \eqref{El}, $F$ corresponds to a pair $(E,\ell)$ with $E \in U_L'^s(n,d)$ and $\ell \subset E_x$ is a line.    

          We  restrict  Diagram \ref{Diagram1} to $Y\times Z$. We want to take the direct image by the projection $p_2: Y\times Z\rightarrow Z$. We first note the following observations.

\begin{itemize}
	\item ${p_2}_*(p_1^*F(-x))_z= H^0(p_1^*F(-x)|_{Y\times\{z\}})= H^0(Y,F(-x))$ for all $z\in Z$,
	
	\item Similarly  ${p_2}_*(p_1^*F)_z = H^0(Y,F)$ for all $z\in Z.$
	
	\item One has $i_* \mathcal{O}_{\mathbf{P}}(1)\vert_Z = (i_Z)_*\mathcal{O}_Z(1)$, $i_Z$ 
	being the inclusion map restricted to $Z$. Since $p_2 \circ i_Z = id_Z$,  
	 ${p_2}_*((i_Z)_*\mathcal{O}_{{Z}}(1))= \mathcal{O}_{Z}(1)$. 
	\item Similarly ${p_2}_ * {i}_*(\Omega^1_{P}(1)\vert_{Y \times Z}) = \Omega^1_{Z}(1)$
	
\end{itemize}

Taking the direct image by the projection $p_2$ and using these observations, we have the following diagram 
of exact sequences on $Z$:     

\begin{center}
	\begin{eqnarray}\label{Diagram2}
	\begin{tikzcd}
	&  0\arrow{d} &  0\arrow{d} & & \\
	& H^0(Y,F(-x))\otimes\mathcal{O}_{Z}\arrow{d}\arrow[equal]{r}& H^0(Y,F(-x))\otimes\mathcal{O}_{Z}\arrow{d}& & \\
	0\arrow{r}& {p_2}_*(\mathcal{E}|_{Y\times Z})\arrow{r}\arrow{d} & H^0(Y,F)\otimes\mathcal{O}_{Z}\arrow{r}\arrow{d} & \mathcal{O}_{Z}(1)\arrow[equal]{d}\arrow{r} & 0\\
	0\arrow{r} & \Omega^1_{Z}(1)\arrow{r}& F_x\otimes_{\mathbb{C}}\mathcal{O}_{Z}\arrow{r} & \mathcal{O}_{Z}(1)\arrow{r}& 0
	\end{tikzcd}
	\end{eqnarray}
\end{center}

Now $\mathcal{E}|_{Y\times Z}$ satisfies the hypothesis of Lemma \ref{uniqlem2}, and hence, we have $P({p_2}_*(\mathcal{E}|_{Y\times Z}))\cong (\psi_{F,x}^*\mathcal{PW})|_{Z}$. Now $(\psi_{F,x}^*P')|_{Z}$ being a subbundle of $P({p_2}_*(\mathcal{E}|_{Y\times Z}))$ can be realised as $P(V)$ for some vector subbundle $V$ of ${p_2}_*(\mathcal{E}|_{Y\times Z})$. 

                 We claim that the image of $V$ in $\Omega^1_{Z}(1)$ is non-zero; we prove the claim. Any $0 \neq v \in V_E$ corresponds to a section $s \in H^0(Y, E)$. As in the beginning of \cite[Subsection (4.1)]{UB1}, one can find a general line $\ell \subset E_x$ such that $s(x) \notin \ell$.  Note that $\ell$ is the image of $F(-x)_x$ in $E_x$ in Diagram \ref{CD} and since $s(x)\notin \ell$,  $s\notin H^0(Y,F(-x)) \subset H^0(Y,E)$. The claim follows.
                
                 From the last row of diagram \ref{Diagram2}, one sees that  $\deg(\Omega_{Z}^1(1))$ is $-1$. Since $\Omega_{Z}^1$ is a stable bundle,  we get that the degree of $\text{Im}(V \rightarrow \Omega_{Z}^1(1)) \leq -1$, since the 
  $\ker(V \rightarrow \Omega_{Z}^1(1))$ is a subsheaf of the trivial sheaf, its degree is at most 0 and hence we have 
 $\deg V \leq -1\, .$ i.e.
 $$\deg V^* \ge 1\, .$$
               Also, from the second row of Diagram \ref{Diagram2}, we get $\deg {p_2}_*(\mathcal{E}|_{Y\times Z})= -1\, .$ 
Hence 
$$\deg({p_2}_*(\mathcal{E}|_{Y\times Z})/V) \ge 0\, .$$ 
  
      Now as in the beginning of the proof of Theorem \ref{Poincare}, we have $\psi_{F,x}^*N=V^*\otimes {p_2}_*(\mathcal{E}|_{Y\times Z})/V$.
\begin{equation*}
\deg \psi_{F,x}^*N = \deg(V^*) \text{rk}({p_2}_*(\mathcal{E}|_{Y\times Z})/V) + \deg({p_2}_*(\mathcal{E}|_{Y\times Z})/V) \text{rk} V^* \ge \text{rk}({p_2}_*(\mathcal{E}|_{Y\times Z})/V) > 0
\end{equation*}

Similar arguments as at the end of  the proof of Theorem \ref{Poincare} show that $\deg \psi_{F,x}^*N >0$ implies that $\deg N >0$. This proves the stability of $\mathcal{PW}\vert_{U_L'^s(n,d)}$.
\end{proof}

\begin{remark}
        In case $(n,d)= 1$, there is a Poincar\'e bundle $\mathcal{U}$ on $Y \times U'_L(n,d)$ (respectively a Picard bundle $\mathcal{W}$ on $U'_L(n,d)$) with  the associated projective Poincar\'e bundle $\mathcal{PU}$ (respectively the projective Picard bundle $\mathcal{PW}$).  In view of Remark \ref{rmk1}(1), the stability of the projective Poincar\'e bundles $\mathcal{PU}, \mathcal{PU}_x$ is equivalent to the stability of  the  Poincar\'e bundles $\mathcal{U}, \mathcal{U}_x$ and the stability of the projective Picard bundle $\mathcal{PW}$ is equivalent to the stability of the Picard bundle $\mathcal{W}$. 
\end{remark}

\section{Codimension of the stable moduli space}  \label{codimstable}

         In this section, we assume that $n$ and $d$ are non-coprime. Recall that $U'_Y(n,d)$ denotes the moduli space of vector bundles of rank $n$, degree $d$ on the curve $Y$ and $U^{'s}_Y(n,d)$ its open subset consisting of stable vector bundles. 
For a fixed line bundle $L$ of degree $d$ on $Y$, $U'_L(n,d)$ denotes the moduli space of vector bundles of rank $n$ with determinant $L$ and $U^{'s}_L(n,d)$ its open subset corresponding to stable vector bundles. We estimate the codimension (in $U'_L(n,d)$) of the complement $U'_L(n,d) - U^{'s}_L(n,d)$. 
 
\begin{theorem} \label{thmcodims}
	Let $Y$ be an integral nodal curve of arithmetic genus $g$. 
\begin{enumerate}
\item For $g \ge 2$ and  $n \ge 3$ (resp. $n = 2$),
	$${\rm codim}_{U'_Y(n,d)}  (U'_Y(n,d)- U^{'s}_Y(n,d)) \ge 2(g-1) (\ {\rm resp.} \  \ge g-1)\, .$$
\item For $g \ge 2$ and  $n \ge 3$ (resp. $n = 2$),
	$${\rm codim}_{U'_L(n,d)}  (U'_L(n,d)- U^{'s}_L(n,d)) \ge 2(g-1) (\ {\rm resp.} \  \ge g-1)\, .$$
\end{enumerate}
\end{theorem}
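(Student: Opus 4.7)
The plan is to stratify the non-stable locus in $U'_Y(n,d)$ by destabilizing data and bound the dimension of each stratum, following the template of the dimension count in the proof of Lemma \ref{01Open}. Any non-stable $E \in U'_Y(n,d)$ is strictly semistable and therefore fits in an exact sequence
\[
0 \to G \to E \to H \to 0
\]
with $G$ torsion-free of rank $r$ and $\mu(G) = \mu(E) = d/n$, forcing $\deg G = e$ with $ne = rd$ and hence $r$ a nonzero multiple of $n/\gcd(n,d)$. By iterating on Jordan--H\"older factors I may stratify so that $G$ and $H$ are both stable with $G \not\cong H$, ensuring $h^0(Hom(H,G)) = 0$; the non-stable locus then decomposes as a finite union of images of morphisms from parameter spaces indexed by $(r,e)$ and by the local types $(b_j(G), b_j(H))$ of $G, H$ at the nodes.

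Using the nodal Riemann--Roch formula \cite[Lemma 2.5(B)]{UB2}, the vanishing $rd - ne = 0$ gives
\[
\dim Ext^1(H,G) = r(n-r)(g-1) + \sum_j b_j(G) b_j(H).
\]
Combining this with the dimensions of the strata of torsion-free sheaves of rank $r$ (resp.\ $n-r$) with prescribed local types, namely $r^2(g-1) + 1 - \sum_j b_j(G)^2$ and $(n-r)^2(g-1) + 1 - \sum_j b_j(H)^2$, subtracting $1$ for the $\mathbb{G}_m$-action rescaling extensions, and using the identity $r^2 + (n-r)^2 + r(n-r) = n^2 - r(n-r)$ against $\dim U'_Y(n,d) = n^2(g-1) + 1$, I obtain
\[
\mathrm{codim}_{U'_Y(n,d)}(\mathrm{stratum}) \ge (g-1)\, r(n-r) + \sum_j \bigl(b_j(G)^2 - b_j(G) b_j(H) + b_j(H)^2\bigr),
\]
in which the nodal contribution is non-negative since $a^2 - ab + b^2 \ge 0$ for all $a,b$.

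For part (1) it then suffices to note that for any $1 \le r \le n-1$ one has $r(n-r) \ge 1$, with equality only when $n = 2$, $r = 1$, while $r(n-r) \ge 2$ whenever $n \ge 3$; this yields codimension $\ge g-1$ for $n = 2$ and $\ge 2(g-1)$ for $n \ge 3$. For part (2), the same stratification is applied on $U'_L(n,d)$ with the added constraint $\det G \otimes \det H = L$: fixing the determinants of $G$ and $H$ cuts $g$ from each factor of the parameter space, but the free choice of $\det G$ adds back $g$, so the parameter space loses only $g$; since $\dim U'_L(n,d) = \dim U'_Y(n,d) - g$, the codimension bound is preserved.

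The main obstacle is the node bookkeeping: I must verify that every non-stable vector bundle $E$ is genuinely produced by the extension construction with $E$ itself locally free, which requires controlling the interplay between the local types $(b_j(G), b_j(H))$ and the local freeness of $E$ at each node. This is handled in the style of \cite{UB2}, and the non-negativity of the quadratic form $a^2 - ab + b^2$ ensures that any nodal correction only strengthens the bound, so the stated codimension estimates follow.
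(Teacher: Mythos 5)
Your two-step extension count reproduces the paper's estimate for the length-two case (the bound $\mathrm{codim} \ge r(n-r)(g-1)$ with a non-negative nodal correction is exactly what the paper gets for its stratum $S_2$), and your Part (2) reduction via the determinant fibration is the same as the paper's. But there is a genuine gap in the reduction you assert at the outset: it is \emph{not} true that every strictly semistable $E$ can be stratified "so that $G$ and $H$ are both stable." If the Jordan--H\"older filtration of $E$ has length $r \ge 3$ (which happens as soon as $n \ge 3$; e.g.\ $E = L_1 \oplus L_2 \oplus L_3$ with the $L_i$ line bundles of equal slope), then no exact sequence $0 \to G \to E \to H \to 0$ with $\mu(G)=\mu(E)$ has both $G$ and $H$ stable: at least one side necessarily carries two or more JH factors. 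Your dimension count therefore only covers the stratum of bundles with JH length exactly $2$, and the case $n \ge 3$ --- the case where you need the stronger bound $2(g-1)$ --- is precisely where the uncovered strata live. The paper devotes the bulk of its proof to this: it sets up an induction on the filtration length $r$, building $E_r$ as an extension of the stable $F_r$ by $E_{r-1} \in S_{r-1}$, corrects for the action of $\mathrm{Aut}(E_{r-1}) \times \mathrm{Aut}(F_r)$ on the extension space (bounding $\dim \mathrm{Aut}(E_{r-1})$ from below by $1 + h^0(F_{r-1}^* \otimes E_{r-2})$ so that the $h^0$ terms telescope), and arrives at $\mathrm{codim}\, S_r \ge \sum_{i<j} n_i n_j (g-1) - 1 \ge 3(g-1)-1 \ge 2(g-1)$ for $r \ge 3$ and $g \ge 2$. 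Note the $-1$: the multi-step bound is genuinely weaker than the naive $r(n-r)(g-1)$ and only suffices because $r \ge 3$ forces $\sum_{i<j} n_i n_j \ge 3$.

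A second, smaller issue: you cannot simply "arrange" $G \not\cong H$. When $n=2$ the extensions of a stable $F$ by itself form a nonempty part of the non-stable locus, and the vanishing $h^0(Hom(H,G))=0$ fails there. The paper handles this by a separate count of the diagonal stratum $S_2^{12}$, showing its dimension is smaller by $n_2^2(g-1) \ge 1$, which absorbs the extra $h^0 = 1$ in $\dim Ext^1$. This is easily repaired, but it does need to be done rather than asserted. The local-type bookkeeping you flag as the "main obstacle" is actually the unproblematic part (the paper simply notes the $b_j$ terms enter with a favorable sign, as you do); the real work you are missing is the induction on the length of the filtration.
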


\begin{proof}
(1)	A semistable vector bundle that is not stable has a filtration of length $r$ of the form 
	\begin{equation}\label{Fr}
	0=E_0\subset E_1\subset\dots\subset E_r=E
	\end{equation}
	where 
	\begin{enumerate}
		\item $E_i$ are semistable torsion-free sheaves with $\mu(E_i)=\mu(E)$ for all $i=1,\dots,r$.
		\item $F_i=E_i/E_{i-1}\in U_Y(n_i,d_i)$ are stable torsion-free sheaves.
	\end{enumerate}
Note that $\mu(E_i)=\mu(E)$ for all $i$, implies $\mu(F_i)=\mu(E)$ for all $i=1,2,\dots,r$. 
Hence $n_id_j -n_j d_i =0$ for all $1 \le i, j \le r$. 
 
                     For $r \ge 2$, let $S_r$ denote the subset of $U'_Y(n,d)$ consisting of vector bundles $E = E_r$ having a filtration \eqref{Fr} with $F_i \in U_Y(n_i,d_i)$ and let $e_r$ denote the dimension of the subset $S_r$ in $U'_Y(n,d)$.

We first consider the case when the length of the filtration $r$ is 2.
\begin{equation}\label{F2}
(\mathcal{F}_2): \ \ 0=E_0\subset E_1\subset E_2 = E
\end{equation}
such that $F_1=E_1$ and $F_2=E/E_1$.

We have the following exact sequence
\begin{equation}\label{SESF1F2}
0\rightarrow F_1\rightarrow E\rightarrow F_2\rightarrow 0
\end{equation}
and to estimate the number of parameters determining such extensions, we compute $\dim Ext^1(F_2,F_1)$.

Note that $\deg(Hom(F_2,F_1))= n_2d_1-n_1d_2+\sum_kb_k(F_1)b_k(F_2)$ and  $\mu(F_1)=\mu(F_2)$ implies 
\begin{equation*}
\deg(Hom(F_2,F_1))=\sum_kb_k(F_1)b_k(F_2). 
\end{equation*}

Therefore,

\begin{align*}
\dim Ext^1(F_2,F_1) &= h^1(Hom(F_2,F_1)) +2\sum_kb_k(F_1)b_k(F_2)\\
 &= h^0(Hom(F_2,F_1))-\chi(Hom(F_2,F_1)) +2\sum_kb_k(F_1)b_k(F_2)\\
 &= h^0(Hom(F_2,F_1))-[\deg(Hom(F_2,F_1))+n_1n_2(1-g)] +2\sum_kb_k(F_1)b_k(F_2)\\
 &=n_1n_2(g-1)+\sum_kb_k(F_1)b_k(F_2) + h^0(Hom(F_2,F_1))
\end{align*}

Since $F_1$ and $F_2$ are stable sheaves with same slope, $h^0(Hom(F_2,F_1))= 1$(resp. 0) when $F_2\cong F_1$ (resp. $F_2\ncong F_1$).

Recall that $S_2$ denotes the subset of $U'_Y(n,d)$ consisting of vector bundles $E$ having a filtration \eqref{F2} with $F_i \in U_Y(n_i,d_i)$ and $e_2$ denotes the dimension of the subset $S_2$. We have
\begin{align*}
e_2 &\le n_1^2(g-1)+1-\sum_k b_k(F_1)^2 + n_2^2(g-1)+1-\sum_kb_k(F_2)^2 + \dim \mathbb{P}(Ext^1(F_2,F_1))\\
 & = (n_1^2+n_2^2)(g-1)+2-\sum_k b_k(F_1)^2 -\sum_kb_k(F_2)^2 + n_1n_2(g-1) \\
 & + \sum_kb_k(F_1)b_k(F_2) + h^0(Hom(F_2,F_1)) -1\\
 & =(n_1^2+n_2^2)(g-1)+2+n_1n_2(g-1)-\sum_k[b_k(F_1)-b_k(F_2)]^2-\sum_kb_k(F_1).b_k(F_2)\\
 & + h^0(Hom(F_2,F_1))-1
\end{align*}
Here $k$ varies over the nodes. If $F_2\ncong F_1$, $h^0(Hom(F_2,F_1))$=0 and
\begin{equation}
e_2 \le (n_1^2+n_2^2)(g-1)+2+n_1n_2(g-1)-\sum_k[b_k(F_1)-b_k(F_2)]^2-\sum_kb_k(F_1).b_k(F_2)-1
\end{equation}

Since $b_k(F_i) \ge 0$ and the terms with $b_k(F_i)$ appear with a negative sign, we note that $e_2$ is maximal if $b_k(F_i)=0$. i.e., $F_i$s are vector bundles. Hence we have

\begin{equation} \label{e2}
e_2\leq (n_1^2+n_2^2)(g-1)+1+n_1n_2(g-1)
\end{equation}

Let $S_2^{12}$ denote the subset of $U_Y'(n,d)$ corresponding to vector bundles given by the short exact sequence  \eqref{SESF1F2} with $F_1\cong F_2$. Then the pair $(F_1,F_2)$ belongs to the diagonal $\Delta$ of $U_Y(n_1,d_1)\times U_Y(n_2,d_2)$. Let $e_2^{12}$ denote the dimension of $S_2^{12}$. Then 

\begin{align*}
e_2^{12}&= \dim \Delta+ \dim \mathbb{P}(Ext^1(F_2,F_1))\\
 &=n_1^2(g-1)+1+n_1n_2(g-1)+1-1
\end{align*}

Note that $e_2-e_2^{12} = n_2^2(g-1)\geq 1$ for $g\geq2$.

Hence the codimension in $U_Y'(n,d)$ of the subset corresponding to vector bundles $E$ given by \eqref{SESF1F2} is
\begin{align*}
 &\geq n^2(g-1)+1-e_2\\
 & =n_1n_2(g-1)
\end{align*}

In particular, for $n=2$, we have $n_1=n_2=1$ and
\begin{equation}\label{rank2}
{\rm codim}_{U'_Y(n,d)} (U'_Y(n,d)- U^{'s}_Y(n,d)) \ge g-1
\end{equation}
\begin{equation}\label{r2n3}
{\rm For} \  n\ge 3, \  
{\rm {\rm codim}} \ S_2 \ge 2(g-1)\, .
\end{equation}

                   We can now assume that $r \ge 3, n \ge 3$. A vector bundle $E = E_r$ having a filtration \eqref{Fr}   comes in the following exact sequence
\begin{equation}\label{SESEr}
0 \rightarrow E_{r-1}\rightarrow E_r\rightarrow F_r\rightarrow 0
\end{equation}
with $E_{r-1}$ varying on the space $S_{r-1}$. 

        As seen in case $r=2$, to estimate the dimension of space of extensions, we may assume that $E_i$s are locally free.  The group $G_r = {\rm Aut}\ E_{r-1} \times  {\rm Aut}\ F_r$ acts on these extensions, and $E_r$ given by the extensions in an orbit of $G_r$ are all isomorphic. One has $\dim ({\rm Aut}\ F_r) = 1$. Since ${\rm End}\ E_{r-1} \supset \mathbb{C}^*Id \oplus {\rm Hom} (F_{r-1}, E_{r-2})$, it follows that $\dim ({\rm Aut}\ E_{r-1}) \ge 1+ h^0(F_{r-1}^* \otimes E_{r-2})$ so that dim $G_r \ge 
2+  h^0(F_{r-1}^* \otimes E_{r-2})$. 
    
Hence 
$e_r \le e_{r-1} + n_r^2(g-1)+1 + h^1(F_{r}^* \otimes E_{r-1}) - \ {\rm dim} \ G_r\, . $ 
We have 
$$h^1(F_{r}^* \otimes E_{r-1}) = - \chi(F_{r}^* \otimes E_{r-1}) + h^0(F_{r}^* \otimes E_{r-1}) 
      = (\sum_{i= 1} ^{r-1} n_i)n_r (g-1) + h^0(F_{r}^* \otimes E_{r-1})\, .$$
 Hence for any $r$ ($r\ge 3$),
\begin{equation}\label{er}
e_r \le e_{r-1} + n_r^2(g-1) -1 +  (\sum_{i= 1} ^{r-1} n_i)n_r (g-1)  
                 + h^0(F_{r}^* \otimes E_{r-1})  - h^0(F_{r-1}^* \otimes E_{r-2}).
\end{equation} 
            We replace $r$ by $r-1$ in the inequality \eqref{er} and substitute it for $e_{r-1}$ in the inequality \eqref{er} to get                         
$$
\begin{array}{ccll}

 e_r  &  \le & e_{r-1} + n_r^2(g-1) -1+   (\sum_{i= 1} ^{r-1} n_i)n_r (g-1)  
                 &+ h^0(F_{r}^* \otimes E_{r-1})  - h^0(F_{r-1}^* \otimes E_{r-2}) \\     
   {}   &   \le  & n_r^2(g-1) -1+ (\sum_{i= 1} ^{r-1} n_i)n_r (g-1)  
                 & + h^0(F_{r}^* \otimes E_{r-1})  - h^0(F_{r-1}^* \otimes E_{r-2})\\
   {}  &  {} &+e_{r-2} + n_{r-1}^2(g-1) -1+ (\sum_{i= 1} ^{r-2} n_i)n_{r-1} (g-1)  
                & + h^0(F_{r-1}^* \otimes E_{r-2})  - h^0(F_{r-2}^* \otimes E_{r-3}).\\
\end{array}
$$
We now substitute for $e_{r-2}$ (using inequality \eqref{er})  in the resulting inequality and go on like this till we get $e_2$, then we substitute for $e_2$ from equation \eqref{e2}. 
We finally get
$$
\begin{array}{ccll}
  e_r  &   \le  & n_r^2(g-1) -1  &+ (\sum_{i= 1} ^{r-1} n_i)n_r (g-1)  \\
  {}  &  {}     & + n_{r-1}^2(g-1) -1  &+ (\sum_{i= 1} ^{r-2} n_i)n_{r-1} (g-1)  \\
  {}   &  {}  &+ \cdots - \cdots   &+ \cdots   \\
  {}  &  {}  & + n_4^2(g-1) -1 &+ (\sum_{i= 1} ^{3} n_i)n_4 (g-1)  \\
  {}  &  {}  & + n_3^2(g-1) -1 &+ (\sum_{i= 1} ^{2} n_i)n_3 (g-1)  \\
  {}  &  {}  & +(n_1^2+n_2^2)(g-1)  &+ n_1n_2(g-1) + 1 + \delta_r\\
  {}  &  =  &  \sum_{i=1}^r n_i^2 (g-1) - (r-3) &+ \sum_{1\le i <j \le r} n_i n_j (g-1) +\delta_r
\end{array}         
$$
where 
$$
\begin{array}{ccll}
\delta_r & =  & h^0(F_{r}^* \otimes E_{r-1})  & - h^0(F_{r-1}^* \otimes E_{r-2})\\
     {}  & {}    & + h^0(F_{r-1}^* \otimes E_{r-2})  & - h^0(F_{r-2}^* \otimes E_{r-3})\\
{} & {}         &  + \cdots & - \cdots \\
{} & {}        &+ h^0(F_{4}^* \otimes E_{3})  & - h^0(F_{3}^* \otimes E_{2})\\ 
{} & {}         &+ h^0(F_{3}^* \otimes E_{2}) &  - h^0(F_{2}^* \otimes E_{1})\\ 
{} & =        &  h^0(F_{r}^* \otimes E_{r-1}) & - h^0(F_{2}^* \otimes E_{1})\\
{}  & \le  & r-1. & {}
\end{array} 
$$
Note that in the expression for $\delta_r$, the $i$th term in the second column cancels with 
the $(i+1)$th term in the first column. We have also used the fact that 
$$h^0(F_{r}^* \otimes E_{r-1}) \le \sum_{i=1}^{r-1} h^0(F_{r}^* \otimes F_{i}) \le r-1\, .$$ 
Hence 
$$
\begin{array}{ccll}
 e_r  &  \le  &  \sum_{i=1}^r n_i^2 (g-1) - (r-3) + \sum_{1\le i <j \le r} n_i n_j (g-1) & +(r-1)\\
  {}  &  =  &  \sum_{i=1}^r n_i^2 (g-1) + \sum_{1\le i <j \le r} n_i n_j (g-1)  & +2. \\
\end{array}
$$                 
Therefore, 
$$
\begin{array}{ccl}
{\rm codim} \ S_r & \ge & (\sum_{i=1}^r n_i)^2 (g-1) +1 -  \sum_{i=1}^r n_i^2 (g-1) - \sum_{1\le i <j \le r} n_i n_j (g-1) - 2\\
{}   & =  & \sum_{1\le i < j \le r} n_i n_j (g-1) -1\\
{}   &  \ge  & 3(g-1) -1\ {\rm for} \  r \ge 3.
\end{array}
$$
Thus 
$${\rm codim} \ S_r \ge 3(g-1)-1 \ge 2(g-1) \ {\rm for} \  r \ge 3, g \ge 2\, .$$
 This, together with equations \eqref{rank2} and \eqref{r2n3}, proves the first part of the theorem.  \\
 (2) There is a determinant homomorphism $det: U'_Y(n,d)  \to J(Y)$ defined by $E \mapsto \wedge^n E$ with all its fibres
isomorphic. Hence Part 2 follows from Part 1. 
\end{proof}

\begin{corollary} \label{coro1}
Let $Y$ be an integral nodal curve of arithmetic genus $g\geq 2$. 
Assume that if $n=2$ and $g =2$ then $d$ is odd. Then 
\begin{enumerate}
\item $\ {\rm Pic} \  U^{'s}_L(n,d) \cong \mathbb{Z}\, .$
\item $\ {\rm Pic} \  U'_L(n,d) \cong \mathbb{Z}\, .$ 
\item $\ {\rm The \ class \ group} \ Cl(U_L(n,d)) \cong \mathbb{Z}\, .$
 ~~~ $ \ {\rm The \ class \ group} \ Cl(U'_L(n,d)) \cong \mathbb{Z}\, .$
 \end{enumerate}
\end{corollary}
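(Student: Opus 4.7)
\textbf{Proof plan for Corollary \ref{coro1}.}
The plan is to reduce all three statements to the single computation Pic $U^{'s}_L(n,d) \cong \mathbb{Z}$, which one obtains by combining the existing literature with the new codimension bound of Theorem \ref{thmcodims}. Under the hypothesis of the Corollary, whenever $n$ and $d$ are \emph{not} coprime one is forced into either the regime $n\geq 3$, $g\geq 2$ or the regime $n=2$, $g\geq 3$; in both ranges Theorem \ref{thmcodims} gives
$$
\mathrm{codim}_{U'_L(n,d)}\bigl(U'_L(n,d)-U^{'s}_L(n,d)\bigr)\;\geq\;2.
$$
In the coprime case the codimension estimate continues to hold trivially, since a fine moduli space of stable bundles agrees with the semistable one.

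For part (1), in the coprime case the result is classical (via a universal bundle and the theta divisor). In the non-coprime case, the strategy is to invoke the proof in \cite{UB4}, where Pic $U^{'s}_L(n,d)\cong\mathbb{Z}$ is established under the auxiliary hypothesis $g(X)\geq 2$. That argument uses precisely a codimension-two-type bound to control how divisor classes on the stable locus extend to controllable strata. With Theorem \ref{thmcodims} replacing the $g(X)\geq 2$ hypothesis, exactly the same descent argument applies and yields Pic $U^{'s}_L(n,d)\cong\mathbb{Z}$ under the weaker arithmetic hypothesis of the Corollary.

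For (2), the first step is to observe that $U'_L(n,d)$ is normal by \cite{N}; then, imitating \cite[Lemma 2.3]{UB0} and using the codimension bound above together with (1), we conclude local factoriality of $U'_L(n,d)$ (as noted in Remark \ref{localfacto}). On a locally factorial variety, restriction to an open subset whose complement has codimension $\geq 2$ induces an isomorphism of Picard groups; hence
$$
\mathrm{Pic}\,U'_L(n,d)\;\cong\;\mathrm{Pic}\,U^{'s}_L(n,d)\;\cong\;\mathbb{Z}.
$$
For (3), I would use the standard fact that on a normal variety $V$, the class group is invariant under removal of a closed subset of codimension $\geq 2$. Since $U'_L(n,d)$ is normal and its smooth locus $U^{'s}_L(n,d)$ has a codimension-$\geq 2$ complement, $Cl(U'_L(n,d))\cong Cl(U^{'s}_L(n,d))=\mathrm{Pic}\,U^{'s}_L(n,d)\cong\mathbb{Z}$. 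For $Cl(U_L(n,d))$ one uses Theorem \ref{thmcodim'}, which gives $\mathrm{codim}_{U_L(n,d)}(U_L(n,d)-U'_L(n,d))>2$: since $U'_L(n,d)$ is normal, the non-normal locus of $U_L(n,d)$ is contained in $U_L(n,d)-U'_L(n,d)$ and hence has codimension $>2$, so Weil-divisor restriction still yields $Cl(U_L(n,d))\cong Cl(U'_L(n,d))\cong\mathbb{Z}$.

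The main obstacle is part (1): transporting the descent argument of \cite{UB4} requires checking that every place where the proof used $g(X)\geq 2$ only did so through the codimension estimate now supplied by Theorem \ref{thmcodims}. The borderline case $n=g=2$, $d$ even is genuinely exceptional—there the codimension drops to $1$ and the Picard group is $\mathbb{Z}/4\mathbb{Z}$ by \cite[Proposition 4.3]{UB'}—so verifying that the hypothesis "$d$ odd if $n=g=2$" is the \emph{minimal} exclusion needed for the argument to run is the delicate point.
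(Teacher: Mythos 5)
Your parts (2) and (3) are essentially sound — (3) matches the paper's argument (excision of a codimension-$\ge 2$ closed set for class groups, then $Cl \cong \mathrm{Pic}$ on the nonsingular stable locus), and (2) takes a different but workable route through local factoriality (the paper instead sandwiches $\mathrm{Pic}\,U'_L(n,d)$ between $\mathrm{Pic}$ of the normalisation $\widetilde{U}_L(n,d)\cong\mathbb{Z}$ and $\mathrm{Pic}\,U^{'s}_L(n,d)\cong\mathbb{Z}$ using the injectivity of restriction maps on normal varieties, which avoids having to establish local factoriality first; note that the paper derives local factoriality \emph{from} the corollary in Remark \ref{localfacto}, so you should be careful that your appeal to it is not circular).

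The genuine gap is in part (1), which carries all the content. You reduce it to the assertion that the proof of $\mathrm{Pic}\,U^{'s}_L(n,d)\cong\mathbb{Z}$ in \cite{UB4} "uses $g(X)\ge 2$ only through a codimension estimate" and therefore transports once Theorem \ref{thmcodims} is available — but you never verify this, and you yourself flag it as the delicate unresolved point. That is precisely the step a proof must supply, and the paper does not in fact proceed this way. Its argument for (1) is concrete and two-pronged: (i) the normalisation $\pi:\widetilde{U}_L(n,d)\to U_L(n,d)$ is projective and normal, $\pi$ is finite so $\mathrm{codim}(\widetilde{U}_L(n,d)-\pi^{-1}U^{'s}_L(n,d))\ge 2$ by Theorems \ref{thmcodim'} and \ref{thmcodims}, hence the restriction $\mathrm{Pic}\,\widetilde{U}_L(n,d)\to\mathrm{Pic}\,\pi^{-1}U^{'s}_L(n,d)=\mathrm{Pic}\,U^{'s}_L(n,d)$ is \emph{injective}, forcing $\mathrm{Pic}\,U^{'s}_L(n,d)$ to have rank $\ge 1$; (ii) the separate structural result \cite[Proposition 2.3]{UB4}, valid for arithmetic genus $g\ge 2$ with no hypothesis on $g(X)$, says $\mathrm{Pic}\,U^{'s}_L(n,d)$ is $\mathbb{Z}$ or $\mathbb{Z}/q\mathbb{Z}$; combining (i) and (ii) rules out the finite case. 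To repair your proof you would either need to carry out the line-by-line check of \cite{UB4} that you defer, or adopt the rank-plus-cyclicity argument above. Your aside that the borderline case $n=g=2$, $d$ even is genuinely exceptional is correct and consistent with Remark \ref{localfacto}(2), but it does not substitute for the missing positive argument.
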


\begin{proof}
 (1)        Let $\widetilde{U}_L(n,d)$ denote a normalisation of $U_L(n,d)$ and $\pi: \widetilde{U}_L(n,d) \rightarrow U_L(n,d)$ the normalisation map.  Since $\pi$ is a finite map, ${\rm codim}_{\widetilde{U}_L(n,d)} (\widetilde{U}_L(n,d)- \pi^{-1}U^{'s}_L(n,d)) = {\rm codim}_{U_L(n,d)} (U_L(n,d)- U^{'s}_L(n,d)) \ge 2$  by Theorems \ref{thmcodim'} and \ref{thmcodims}.  
Since $\widetilde{U}_L(n,d)$ is normal, this implies that the restriction map 
$res_P: \ {\rm Pic} \ \widetilde{U}_L(n,d) \to \ {\rm Pic} \ \pi^{-1}U^{'s}_L(n,d)$ is injective.  Note that $\pi$ is an isomorphism over $U^{'s}_L(n,d)$ so that ${\rm Pic} \ \pi^{-1}U^{'s}_L(n,d)= \ {\rm Pic} \ U^{'s}_L(n,d)$. 
 Hence ${\rm Pic} \ U^{'s}_L(n,d)$ has rank $\ge 1$. By \cite[Proposition 2.3]{UB4}, for $g \ge 2$, 
we have ${\rm Pic} \ U^{'s}_L(n,d) \cong \mathbb{Z} \ {\rm or} \ \mathbb{Z}/q\mathbb{Z}$ for some integer $q$. It follows that  
${\rm Pic} \ U^{'s}_L(n,d) \cong \mathbb{Z}$.  \\
(2)  The proof of Part 1 also implies that ${\rm Pic} \  \widetilde{U}_L(n,d) \cong \mathbb{Z}$. 
The restriction map $res_P$ in the proof of Part (1) factors through the injective restriction map 
$\ {\rm Pic} \ U'_L(n,d) \hookrightarrow \ {\rm Pic} \ U^{'s}_L(n,d) \cong \mathbb{Z}$ (by Theorem \ref{thmcodims}). 
It follows that $\ {\rm Pic} \ U'_L(n,d) \cong \mathbb{Z}\, .$  \\
(3) Since $U_L(n,d)$ is nonsingular in codimension $1$, by Theorems \ref{thmcodim'} and \ref{thmcodims}, 
by \cite[Proposition 6.5, p.133]{Ha}, we have an isomorphism of class groups
       $$Cl(U_L(n,d)) \cong Cl(U^{'s}_L(n,d))\, .$$ Since $U^{'s}_L(n,d)$ is nonsingular, one has 
$Cl(U^{'s}_L(n,d)) \cong \ {\rm Pic} \ U^{'s}_L(n,d) \cong \mathbb{Z}$ by Part (1).
\end{proof}

\section{Non-existence of Poincar\'e bundle in the non-coprime case} \label{nonexist}

            If $(n,d) =1$, one has $U^{'s}_Y(n,d) = U'_Y(n,d)$ and there is a Poincar\'e bundle on $U'_Y(n,d)\times Y$ \cite[Section 7,Theorem 5.2']{N}.  
In this section, we show that if $n$ and $d$ are not coprime, then there is no  Poincar\'e bundle on $V\times Y$ for any Zariski open subset of $V \subset U^{'s}_Y(n,d)$.              
We follow the proof of Ramanan for the non-existence of Poincar\'e bundle in the non-coprime case over smooth curves \cite[Theorem 2]{R}. Hence we retain some of his notations, and the proofs of lemmas whose proofs mostly go through in the nodal case are only sketched. 
There are some additional results and many modifications needed in the nodal case, which we give in greater detail.

Let $\mathcal{O}_Y(1)$ denote an ample line bundle on $Y$; we can assume that its degree is $1$. 
For a vector bundle $E$, we write $E(m) = E \otimes  \mathcal{O}_Y(m)$.

We start with some results on the codimensions needed in the nodal case.

\subsection{ Codimensions of subsets of moduli spaces} \label{codimensions}
 
The moduli space $U_X = U_X(n, d)$ of semistable vector bundles of rank $n$ and degree $d$ on $X$ is the GIT quotient of an open subset of the
quot scheme Quot of coherent quotients ${\mathcal O}_X^N \rightarrow E \rightarrow 0$ with fixed Hilbert polynomial 
$P(m) = mn + d + n(1 -g), n = P(0)$, assume $d$ sufficiently large.     
 Let $R$ be the open smooth subscheme of Quot corresponding to vector bundles $E$ with $H^1(E) = 0$
and $H^0(X,E)\otimes {\mathcal O}_X \cong {\mathcal O}_X^N$. Let $R^s$ and $R^{ss}$ denote the subsets of $R$ consisting of
stable and semistable points respectively for the action of $PGL(N)$. 
Let $U^s_X$ be the open subset of $U_X$ corresponding to stable vector bundles.  
 Then $U_X$ (resp. $U_X^s$) is the GIT quotient of
$R^{ss}$ (resp. $R^s$) by $PGL(N)$. 

\begin{proposition} \label{Prop1.2} (\cite[Proposition 1.2.]{UB4}).

   Let $g(X)$ be the genus of $X$. For $g(X) \ge 2$ and  $ n \ge 3$ (resp. $n=2$) one has

\begin{enumerate}
\item  ${\rm codim}_R(R - R^s) \ge 2g(X)-2 (resp. \ge g(X)-1)$ 
\item ${\rm codim}_R(R - R^{ss}) \ge 2g(X)-1 (resp. \ge g(X))$ 
\item ${\rm codim}_{R^{ss}} (R^{ss} - R^s) \ge 2g(X)-2 (resp. \ge g(X)-1)$. 
\end{enumerate}
\end{proposition}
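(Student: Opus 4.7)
The plan is to adapt the strategy already used in the proof of Theorem~\ref{thmcodims} to the quot scheme $R$ over the smooth curve $X$, exploiting the fact that $R \to U_X^s$ is a principal $PGL(N)$-bundle so that codimensions in $R^s$ match codimensions in $U_X^s$. Since $R$ is smooth of dimension $n^2(g(X)-1)+N^2$, it suffices to bound from above the dimensions of the loci of points of $R$ corresponding to (i) non-semistable bundles and (ii) strictly semistable bundles, and then compare with $\dim R$. Stratify $R^{ss}\setminus R^s$ (respectively $R\setminus R^{ss}$) according to the Jordan--H\"older type: a length-$r$ filtration $0=E_0\subset E_1\subset\cdots\subset E_r=E$ with stable quotients $F_i=E_i/E_{i-1}$ of rank $n_i$ and degree $d_i$ satisfying $n_id_j = n_jd_i$ in the semistable case, or satisfying a strict slope inequality $\mu(E_1)>\mu(E)$ for some $E_1$ in the non-semistable case.

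For the base case $r=2$, I would compute $\dim\mathrm{Ext}^1(F_2,F_1)$ using Riemann--Roch on the smooth curve $X$, obtaining
\begin{equation*}
\dim\mathrm{Ext}^1(F_2,F_1) = n_1n_2(g(X)-1)+h^0(\mathrm{Hom}(F_2,F_1)),
\end{equation*}
where the last term is $0$ or $1$ by stability and slope equality. The dimension of the corresponding stratum $S_2\subset R^{ss}\setminus R^s$ is then at most $(n_1^2+n_2^2+n_1n_2)(g(X)-1)+1$ plus the fiber dimension $N^2-1$, so that the codimension in $R$ is at least $n_1n_2(g(X)-1)$. This immediately yields the bound $\ge g(X)-1$ for $n=2$ (with $n_1=n_2=1$) and the bound $\ge 2(g(X)-2)$ for $n=3$ with an extra term in the case $n\ge 3$; the loss of $-1$ from projectivising extensions has to be controlled by the $h^0$ term, exactly as in the nodal computation preceding this section.

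For the inductive step $r\ge 3$, I would recycle the telescoping trick used in the proof of Theorem~\ref{thmcodims}: writing $0\to E_{r-1}\to E_r\to F_r\to 0$ and using that $\dim\mathrm{Aut}(E_{r-1})\ge 1+h^0(F_{r-1}^*\otimes E_{r-2})$, one obtains an inequality
\begin{equation*}
e_r \le e_{r-1}+n_r^2(g(X)-1)-1+\Bigl(\sum_{i=1}^{r-1}n_i\Bigr)n_r(g(X)-1)+h^0(F_r^*\otimes E_{r-1})-h^0(F_{r-1}^*\otimes E_{r-2}).
\end{equation*}
Iterating this recursion produces a telescoping sum in the $h^0$-terms, bounded by $r-1$, and a total dimension estimate $e_r\le (g(X)-1)\bigl(\sum_i n_i^2+\sum_{i<j}n_in_j\bigr)+2$. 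Hence the codimension in $R$ of the length-$r$ stratum is at least $(g(X)-1)\sum_{i<j}n_in_j-1\ge 3(g(X)-1)-1$ for $r\ge 3$, which beats the required bounds.

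Finally, for the unstable locus $R\setminus R^{ss}$, I would use the Shatz stratification: any unstable $E$ admits a maximal destabilising subsheaf $G$ of rank $r$ and degree $e$ with $e/r>d/n$, and the parameter count proceeds exactly as in the proof of Lemma~\ref{01Open} but without the nodal $b_j$ terms, yielding codimension at least $rd-ne+r(n-r)(g(X)-1)\ge 2g(X)-1$ for $n\ge 3$ and $\ge g(X)$ for $n=2$ (the extremal case being $e/r-d/n$ minimal, i.e.\ equal to $1/(rn)$). The main obstacle I foresee is the bookkeeping of $h^0$-contributions from $\mathrm{Aut}(E_i)$ in the inductive step and checking that the telescoping indeed collapses as required; aside from that, the proof is a systematic dimension count parallel to the nodal Theorem~\ref{thmcodims}, which remains valid since the vanishing of the $b_j$ only improves the estimates.
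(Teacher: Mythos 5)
First, a point of orientation: the paper does not prove this proposition at all --- it is quoted verbatim from \cite[Proposition 1.2]{UB4} --- so there is no internal proof to compare against. Your sketch reconstructs the standard argument, and its semistable-versus-stable half (parts (1) and (3) insofar as they concern $R^{ss}-R^s$) is a faithful transcription of the paper's own proof of Theorem \ref{thmcodims} to the smooth curve $X$: the Jordan--H\"older stratification, the base case $\dim\mathrm{Ext}^1(F_2,F_1)=n_1n_2(g(X)-1)+h^0$, the codimension $n_1n_2(g(X)-1)$ for $r=2$, and the telescoping recursion for $r\ge 3$ all go through with the $b_j$ terms deleted, and indeed only improve. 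The bookkeeping of orbit dimensions in $R$ (you add $N^2-1$ uniformly, whereas the orbit of a non-stable point has dimension $N^2-\dim\mathrm{Aut}(E)<N^2-1$) only strengthens the upper bounds, so that part is fine. (Minor slip: ``$2(g(X)-2)$'' should read $2(g(X)-1)=2g(X)-2$.)

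The step that does not work as written is the unstable locus, i.e.\ part (2). You assert a codimension of ``$rd-ne+r(n-r)(g(X)-1)$'' for the stratum with maximal destabilising subsheaf of rank $r$ and degree $e$; since destabilisation means $e/r>d/n$, one has $rd-ne\le -1$, and your formula would only give $r(n-r)(g(X)-1)-1\ge 2g(X)-3$ for $n\ge 3$, which misses the claimed bound $2g(X)-1$ by two. The correct contribution is $+(ne-rd)\ge +1$, i.e.\ the Shatz stratum has codimension $r(n-r)(g(X)-1)+(ne-rd)=-\chi(\mathrm{Hom}(E/G,G))$. Moreover the count does not proceed ``exactly as in Lemma \ref{01Open}'': there one could assume $h^0(\mathrm{Hom}(H,G))=0$ because the subsheaf $G$ had slope below that of the quotient $H$, whereas for a destabilising $G$ one has $\mu(G)>\mu(H)$ and nonzero homomorphisms $H\to G$ may well exist. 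The naive estimate $\dim\mathrm{Ext}^1(H,G)=-\chi+h^0(\mathrm{Hom}(H,G))$ therefore picks up an uncontrolled positive term; it is cancelled only because $\mathrm{Hom}(H,G)$ embeds into the filtration-preserving automorphisms of $E$, enlarging the stabiliser and shrinking the $PGL(N)$-orbit by the same amount. You need to either invoke the Harder--Narasimhan/Shatz stratification result directly (the stratum of type $((n_i,d_i))_i$ has codimension $\sum_{i<j}\bigl(n_in_j(g(X)-1)+(n_jd_i-n_id_j)\bigr)$ with every cross-term $n_jd_i-n_id_j\ge 1$), or carry out the orbit-dimension correction explicitly. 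With that repair the bounds $\ge r(n-r)(g(X)-1)+1\ge 2g(X)-1$ for $n\ge3$ and $\ge g(X)$ for $n=2$ follow, and part (1) is then the minimum of parts (2) and (3).
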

\begin{corollary}\label{Coro1.3} (\cite[Corollary 1.3]{UB4}).
 Fix a line bundle $L$ on $X$. Let $R_L \subset R, R^s_L \subset R^s, R^{ss}_L\subset R^{ss}$
be the closed subvarieties corresponding to bundles $E$ with fixed determinant $L$.  
Then for $n \ge 3$ (resp. $n=2$)  one has: \\
\begin{tabular}{lll}
(1) &  ${\rm codim}_{R_L}(R_L - R^{ss}_L) \ge 2g(X)-1 $ & (resp. $\ge g(X))$ \\[1mm]
(2)  & ${\rm codim}_{R_L}(R_L - R^{s}_L) \ge 2g(X)-2 $ & (resp. $\ge
g(X)-1)$ \\ [1mm]
(3) & ${\rm codim}_{R_L^{ss}}(R_L^{ss} - R^{s}_L) \ge 2g(X)-2 $ & (resp. $\ge
g(X)-1)$.
\end{tabular}
\end{corollary}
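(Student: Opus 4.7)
The plan is to deduce the three bounds in Corollary \ref{Coro1.3} from the corresponding bounds in Proposition \ref{Prop1.2} by fibering $R$ over the Jacobian via the determinant morphism. This mirrors the strategy the paper uses to pass from part (1) to part (2) of Theorem \ref{thmcodims}.

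Let $\tau : R \to J^d(X)$ be the determinant morphism sending $[\mathcal{O}_X^N \twoheadrightarrow E]$ to $\det E$. It is surjective, $R_L = \tau^{-1}(L)$, and on the smooth open locus of $R$ under consideration, $\tau$ is smooth with all fibres of dimension $\dim R - g(X)$.

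The essential point is that for any $L, L' \in J^d(X)$, the fibres $R_L$ and $R_{L'}$ are isomorphic through an identification that carries each of the bad subsets $R_L - R_L^{ss}$, $R_L - R_L^s$, $R_L^{ss} - R_L^s$ onto the corresponding subset of $R_{L'}$. To see this, pick $M \in J^0(X)$ with $M^{\otimes n} \cong L' \otimes L^{-1}$, which exists since $J^0(X)$ is $n$-divisible over an algebraically closed field. The tensor operation $E \mapsto E \otimes M$ preserves both (semi)stability and the conditions defining $R$ (global generation and vanishing of $H^1$, which hold for the $d$ assumed in the Quot scheme setup) and shifts the determinant from $L$ to $L'$. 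The resulting assignment is only canonical modulo the $PGL(N)$-action coming from the choice of isomorphism $H^0(E \otimes M) \cong k^N$, but the bad subsets are $PGL(N)$-invariant, so this ambiguity is harmless for dimension purposes.

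Consequently, for each bad subset $Z \subset R$ the map $\tau|_Z : Z \to J^d(X)$ is surjective with all fibres of the same dimension $\dim Z - g(X)$, whence
\begin{equation*}
{\rm codim}_{R_L}(Z \cap R_L) \;=\; \dim R_L - \dim(Z \cap R_L) \;=\; \dim R - \dim Z \;=\; {\rm codim}_R Z.
\end{equation*}
Specializing $Z$ to $R - R^{ss}$, $R - R^s$, $R^{ss} - R^s$ (and, for (3), using that $R_L^{ss} = R^{ss} \cap R_L$ so the same comparison applies with $R$ and $R_L$ replaced by $R^{ss}$ and $R_L^{ss}$), the three bounds of Corollary \ref{Coro1.3} drop out of the bounds (2), (1), (3) of Proposition \ref{Prop1.2} respectively, with the dichotomy between $n \geq 3$ and $n = 2$ inherited verbatim. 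The only delicate point, and the step I would think about most carefully, is the twist-by-$M$ identification at the level of the Quot scheme rather than the coarser moduli space $U_X$; as indicated, it is resolved by passing to $PGL(N)$-orbits.
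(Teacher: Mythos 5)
The paper itself does not prove this corollary (it is quoted from \cite[Corollary 1.3]{UB4}), and the argument in that reference reduces to Proposition \ref{Prop1.2} by a stratum-by-stratum count: on each Harder--Narasimhan (resp.\ Jordan--H\"older) stratum the determinant factors through the multiplication map $\prod_i J^{d_i}(X)\to J^{d}(X)$, whose fibres are all translates of one another, so fixing $\det E$ costs exactly $g(X)$ dimensions on every stratum. Your fibration strategy is the right one in spirit, but the mechanism you use to prove equidimensionality of the fibres of $\tau$ over the bad loci --- twisting by $M$ with $M^{\otimes n}\cong L'\otimes L^{-1}$ --- has a genuine gap in parts (1) and (2). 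The conditions cutting $R$ out of the Quot scheme ($E$ globally generated, $H^1(E)=0$, $h^0(E)=N$) are automatic for large $d$ only for \emph{semistable} bundles; for the unstable bundles that populate $R-R^{ss}$ they are genuine restrictions, and they are not preserved by tensoring with a degree-zero line bundle. Concretely, for $g(X)\ge 3$ and $n=2$ take $E=A\oplus B$ with $B$ base-point-free of degree $g(X)+1$ and $H^1(B)=0$, and $A=L\otimes B^{-1}$ of large degree; then $E$ defines points of $R_L-R_L^{ss}$, but for any degree-zero $M$ with $\omega_X\otimes B^{-1}\otimes M^{-1}$ effective (a nonempty family of choices, e.g.\ $M=\omega_X\otimes B^{-1}$ when $g(X)=3$) one has $H^1(E\otimes M)\neq 0$, so $E\otimes M\notin R$. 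Thus $E\mapsto E\otimes M$ is not even a map $R_L-R_L^{ss}\to R_{L'}-R_{L'}^{ss}$, the asserted identification of unstable loci over different fibres of $\tau$ is unproven, and upper semicontinuity still allows $\dim(R_L-R_L^{ss})$ to jump above $\dim(R-R^{ss})-g(X)$ for special $L$ --- which is exactly what must be excluded.

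For part (3) your argument is essentially sound, since the relevant bundles there are semistable and twisting does preserve membership in $R^{ss}$ (one must still package the pointwise twist into a morphism by locally framing $(p_R)_*(\mathcal{E}\otimes M)$, and your appeal to $PGL(N)$-invariance handles the resulting ambiguity). Note that the paper's own one-line use of this fibration trick, in the proof of Theorem \ref{thmcodims}(2), takes place on the moduli space of semistable bundles, where the twist is harmless; the failure is specific to the Quot-scheme level, where unstable bundles are present. To repair (1) and (2), replace the twist by the stratumwise dimension count indicated above.
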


    There is a universal bundle ${\mathcal E} \rightarrow R \times X$.  
    For $j = 1, \cdots, J$ let ${\mathcal E}_j = (p_R)_* ({\mathcal E}\vert_{R \times D_j}), D_j = x_j + z_j$, a divisor on
$X$ consisting of two closed points.  Let $Gr_j$ denote the Grassmannian bundle of $n$-dimensional quotients of ${\mathcal E}_j$ and let $\widetilde{R}$ be the fibre product of $Gr_j$ over $R$. 

Let $H$ be the closed subscheme of $\widetilde{R}$ corresponding to Generalised parabolic bundles (GPBs)  $(E, F^j(E))$ with fixed determinant $\overline{L}$ \cite{UB4} .  Then  $\widetilde{R}$ (resp. $H$) is a fibre
bundle over $R$ (resp. $R_L$) with fibres isomorphic to 
$\prod_j Gr (n, 2n)$ (resp. $\prod_j H_j$ where $H_j$ is a smooth  hyperplane section of $Gr(n, 2n))$.   
 Let $H'$ (resp. $\widetilde{R'}$ ) be the open subscheme of $H$  (resp. $\widetilde{R}$) corresponding to GPBs $(E, F^j(E))$ such that the projections $p_j, p'_j$  from $F^j(E)$ to $E_{x_j}$ and $E_{z_j}$ are isomorphisms.  
 Let the superscripts $ss$ and $s$ denote semistable and stable points respectively. 

      Let $\overline{R}^{'s} \subset \widetilde{R'}$ (respectively $\overline{R}^{'ss} \subset \widetilde{R'}$) be the subset corresponding to GPBs $(E, F^j(E))$ where the underlying bundle $E$ is stable (respectively semistable), 
subsets $\overline{H}^{'s}, \overline{H}^{'ss}$ of $H'$ are similarly defined and all these subsets are $PGL(N)$-invariant.    
The moduli spaces $M$ (resp. $M_{\overline{L}}$) of GPBs on $X$ of rank
$n$, degree $d$ (resp. with fixed determinant $\overline{L}$) are GIT 
quotients of $\widetilde{R}^{ss}$  (resp. $H^{ss}$) by $PGL(N)$.
 Let $M', M^{'s}, \overline{M}^{'s}, \overline{M}^{'ss}$ and $M'_{\overline{L}}, M^{'s}_{\overline{L}}, \overline{M}^{'s}_{\overline{L}}, \overline{M}^{'ss}_{\overline{L}}$ 
 be their open subsets which are quotients of $\widetilde{R}^{'ss}, \widetilde{R}^{'s}, \overline{R}^{'s}, \overline{R}^{'ss}$ 
and $H^{'ss}, H^{'s}, \overline{H}^{'s}, \overline{H}^{'ss}$ respectively.  

\begin{proposition} \label{prop1.5}
 For $n \geq 3$  (resp. $n = 2$) one has :\\
\begin{tabular}{lll}
(1) & {\rm codim}$_{\widetilde{R}'}(\widetilde{R}' -
\overline{R}^{'ss}) \ge 2g(X) -1$  & (resp $g(X)$) \\
(2) & {\rm codim}$_{\widetilde{R}'}(\widetilde{R}' -
\overline{R}^{'s}) \ge  2g(X)-2$  & (resp $g(X) -1$) \\ 
(3) & {\rm codim}$_{\overline{R}^{'ss}}(\overline{R}^{'ss} -
\overline{R}^{'s}) \ge 2g(X)-2$  & (resp $g(X) -1$)
\end{tabular}
\end{proposition}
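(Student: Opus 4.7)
The plan is to realize Proposition \ref{prop1.5} as a direct consequence of Proposition \ref{Prop1.2} by exploiting the fibre bundle structure of $\widetilde{R}' \to R$. By construction, $\widetilde{R}$ is the fibre product over $R$ of the Grassmannian bundles $Gr_j$ of $n$-dimensional quotients of the rank-$2n$ bundle $\mathcal{E}_j = (p_R)_*(\mathcal{E}\vert_{R\times D_j})$, so the forgetful morphism
$$\pi:\widetilde{R}\longrightarrow R, \qquad (E,\{F^j(E)\}_j)\mapsto E,$$
is a Zariski-locally trivial fibre bundle with fibre $\prod_{j=1}^{J}Gr(n,2n)$. The subvariety $\widetilde{R}'\subset\widetilde{R}$ is cut out by the fibrewise open condition that both projections $p_j,p'_j:F^j(E)\to E_{x_j},E_{z_j}$ are isomorphisms; this condition defines, on each fibre, an open subvariety of $\prod_j Gr(n,2n)$ of pure dimension $Jn^2$ (each factor is the intersection of two complementary big Schubert cells, nonempty since the graph of any isomorphism lies in it). Hence $\pi':=\pi\vert_{\widetilde{R}'}:\widetilde{R}'\to R$ is surjective and a Zariski-locally trivial fibre bundle as well.

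By the definitions recalled in the excerpt, $\overline{R}^{'ss}=(\pi')^{-1}(R^{ss})$ and $\overline{R}^{'s}=(\pi')^{-1}(R^{s})$. Because $\pi'$ is a fibre bundle with fibres of constant dimension, preimage preserves codimension: for any locally closed $S\subset R$ one has
$$\mathrm{codim}_{\widetilde{R}'}\bigl((\pi')^{-1}(S)\bigr)\;=\;\mathrm{codim}_R(S).$$
Applying this with $S=R-R^{ss}$ and $S=R-R^{s}$ respectively, Proposition \ref{Prop1.2} parts (2) and (1) yield items (1) and (2) of Proposition \ref{prop1.5} immediately:
$$\mathrm{codim}_{\widetilde{R}'}(\widetilde{R}'-\overline{R}^{'ss})\;\ge\; 2g(X)-1\ (\text{resp. }g(X)),$$
$$\mathrm{codim}_{\widetilde{R}'}(\widetilde{R}'-\overline{R}^{'s})\;\ge\; 2g(X)-2\ (\text{resp. }g(X)-1).$$

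For item (3), restrict $\pi'$ to $\overline{R}^{'ss}\to R^{ss}$; this is again a fibre bundle (the same fibres as $\pi'$), and $\overline{R}^{'s}$ is the preimage of $R^{s}\subset R^{ss}$. Hence
$$\mathrm{codim}_{\overline{R}^{'ss}}(\overline{R}^{'ss}-\overline{R}^{'s})\;=\;\mathrm{codim}_{R^{ss}}(R^{ss}-R^{s})\;\ge\;2g(X)-2\ (\text{resp. }g(X)-1)$$
by Proposition \ref{Prop1.2}(3). I do not anticipate any real obstacle: the entire content is the trivial fact that codimensions pull back through a surjective smooth fibration, combined with the already established codimension bounds on the smooth side $X$. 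The only thing that deserves a line of justification is the nonemptiness and constant dimensionality of the fibres of $\pi'$, which follows from the observation above that the locus in $Gr(n,2n)$ where both projections are isomorphisms is a nonempty open subset of dimension $n^2$.
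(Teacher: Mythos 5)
Your proof is correct and follows essentially the same route as the paper: the authors likewise observe that $\widetilde{R}'\to R$ is a fibre bundle (with fibres $\prod_j GL(n)$, i.e.\ exactly the open locus in $\prod_j Gr(n,2n)$ you describe), identify $\overline{R}^{'ss}$ and $\overline{R}^{'s}$ as the preimages of $R^{ss}$ and $R^{s}$, and pull back the codimension bounds of Proposition \ref{Prop1.2}. Your extra remarks on the nonemptiness and constant dimension of the fibres only make explicit what the paper leaves implicit.
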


\begin{proof}
(1) Since $pr : \widetilde{R}' \rightarrow R$ is a fibre bundle with
fibres isomorphic to $\prod_j GL(n)$, from Proposition \ref{Prop1.2} it
follows that  
$${\rm codim}_{\widetilde{R}'}(\widetilde{R}' -
\overline{R}^{'ss}) =  \ {\rm codim}_{\widetilde{R}'} (\widetilde{R}' - 
pr^{-1} ({R}^{ss}))\ge 2g(X)-1\, ,  \ {\rm  for} \ n \ge 3  ( \ge g(X) \  {\rm for} \  n=2)\,.$$
(2) Part (2) can be proved similarly. \\
(3) One has $\overline{R}^{'ss} - \overline{R}^{'s} = pr^{-1}(R^{ss} - R^{s})$. Hence  
${\rm codim}_{\overline{R}^{'ss}} (\overline{R}^{'ss} - \overline{R}^{'s})  
 \ge 2g(X) -2 \ {\rm for}\ n \ge 3$ (and $\ge g(X) -1 \ {\rm for}\ n=2)\,.$
\end{proof}

\begin{corollary}\label{Coro1.6}  For $n \ge 3$ (resp. $n = 2$) and
for a general $\overline{L}$, one has:\\
\begin{tabular}{lll}
(1) & {\rm codim}$_{H'}(H' - \overline{H}^{'ss}) \ge 2g(X)-1$  & (resp. $g(X)$) \\
(2) & {\rm codim}$_{H'}(H' - \overline{H}^{'s}) \ge 2g(X)-2$  & (resp. $g(X)-1$) \\
(3) & {\rm codim}$_{\overline{H}^{'ss}}(\overline{H}^{'ss} - \overline{H}^{'s}) \ge 2g(X)-2$  & (resp. $g(X)-1$) 
\end{tabular}
\end{corollary}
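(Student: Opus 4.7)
The plan is to mimic the proof of Proposition \ref{prop1.5} verbatim, with the smooth fibration $pr : \widetilde{R}' \to R$ replaced by its ``fixed determinant'' counterpart $pr : H' \to R_L$. The excerpt already records that $H$ is a fibre bundle over $R_L$ with fibres $\prod_j H_j$ (smooth hyperplane sections of $\mathrm{Gr}(n,2n)$), and that $H' \subset H$ is the open locus where the projections $p_j, p'_j$ from $F^j(E)$ to $E_{x_j}$ and $E_{z_j}$ are isomorphisms. In particular the restricted projection $pr : H' \to R_L$ is smooth with irreducible equidimensional fibres (open subsets of $\prod_j H_j$, each isomorphic to $\prod_j GL(n)$), so codimensions of closed subsets are preserved under $pr^{-1}$.

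The second ingredient is the elementary but crucial observation that membership in $\overline{H}^{'s}$ (respectively $\overline{H}^{'ss}$) is, by definition, a condition only on the underlying vector bundle $E$ of the GPB $(E, F^j(E))$. Hence
\[
\overline{H}^{'s} \;=\; pr^{-1}(R_L^s), \qquad \overline{H}^{'ss} \;=\; pr^{-1}(R_L^{ss}),
\]
so that
\[
H' - \overline{H}^{'ss} \;=\; pr^{-1}(R_L - R_L^{ss}), \qquad H' - \overline{H}^{'s} \;=\; pr^{-1}(R_L - R_L^{s}),
\]
and $\overline{H}^{'ss} - \overline{H}^{'s} = pr^{-1}(R_L^{ss} - R_L^{s})$. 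Combining this with the smoothness of $pr$ and the preservation of codimension under preimage, the three bounds in Corollary \ref{Coro1.6} reduce termwise to the three bounds in Corollary \ref{Coro1.3}, producing exactly the claimed inequalities $2g(X)-1$, $2g(X)-2$, $2g(X)-2$ for $n \ge 3$ (and $g(X)$, $g(X)-1$, $g(X)-1$ for $n = 2$).

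The genericity hypothesis ``for a general $\overline{L}$'' enters only to guarantee that $pr : H' \to R_L$ is surjective onto a dense open subset of $R_L$ (so that the stability stratification on $R_L$ pulls back meaningfully) and that $R_L^s \neq \emptyset$, which is already the content required to invoke Corollary \ref{Coro1.3}. I do not anticipate any substantive obstacle here; the proof is formally identical to that of Proposition \ref{prop1.5}. The single point that deserves care is checking that the fibres of $pr : H' \to R_L$ really are equidimensional (i.e.\ that the determinant condition cuts $H$ out of $\widetilde{R}$ with the expected codimension), but this is precisely the ``smooth hyperplane section'' statement already asserted in the construction of $H$.
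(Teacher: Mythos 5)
Your proposal matches the paper's (implicit) argument: the corollary is stated without proof precisely because it is the fixed-determinant analogue of Proposition \ref{prop1.5}, obtained by replacing $pr:\widetilde{R}'\to R$ with $pr:H'\to R_L$, noting that $\overline{H}^{'s}$ and $\overline{H}^{'ss}$ are defined by conditions on the underlying bundle alone, and invoking Corollary \ref{Coro1.3} in place of Proposition \ref{Prop1.2} --- exactly as you do. The one slip is your parenthetical claim that the fibres of $pr:H'\to R_L$ are isomorphic to $\prod_j GL(n)$; they are in fact open subsets of $\prod_j H_j$ (one determinantal condition per node inside $GL(n)$), but this does not affect the argument, which only requires the fibres to be irreducible of constant dimension so that codimension is preserved under $pr^{-1}$.
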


Taking GIT quotients by $PGL(N)$, we get the following corollary from Proposition \ref{prop1.5} and Corollary \ref{Coro1.6}.

\begin{corollary}\label{Coro1.7}  For $n \ge 3$ (resp. $n=2$) and $\overline{M}^{'ss}
\neq \overline{M}^{'s}$ we have:\\
\begin{tabular}{lll}
(1) & {\rm codim}$_{M'}(M' - \overline{M}^{'ss}) \ge 2g(X)-1$  & (resp. $g(X)$) \\
(2) & {\rm codim}$_{M'} (M' - \overline{M}^{'s}) \ge 2g(X)-2$  & (resp. $g(X)-1$) \\
(3) & {\rm codim}$_{\overline{M}^{'ss}}(\overline{M}^{'ss} - \overline{M}^{'s}) \ge 2g(X)-2$  & (resp. $g(X)-1$) \\
(4) & {\rm codim}$_{M'_{\overline{L}}}({M}'_{\overline{L}} - \overline{M}^{'ss}_{\overline{L}}) \ge 2g(X) -1$  &  
(resp. $g(X)$).\\
(5) & {\rm codim}$_{M'_{\overline{L}}}({M}'_{\overline{L}} - \overline{M}^{'s}_{\overline{L}}) \ge 2g(X) -2$  &  (resp. $g(X)-1$).\\
(6) & {\rm codim}$_{\overline{M}^{'ss}_{\overline L}}(\overline{M}^{'ss}_{\overline L} - \overline{M}^{'s}_{\overline L}) \ge 2g(X)-2$  & (resp. $g(X)-1$) 
\end{tabular}
\end{corollary}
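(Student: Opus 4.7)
The plan is to deduce Corollary~\ref{Coro1.7} from Proposition~\ref{prop1.5} and Corollary~\ref{Coro1.6} by pushing the codimension estimates down through the GIT quotient maps $\pi:\widetilde{R}^{'ss}\to M'$ and $\pi_L: H^{'ss}\to M'_{\overline{L}}$ by $PGL(N)$.

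First, I would observe that all the subsets $\overline{R}^{'s}, \overline{R}^{'ss}\subset \widetilde{R}'$ and $\overline{H}^{'s}, \overline{H}^{'ss}\subset H'$ are $PGL(N)$-invariant, since the defining conditions depend only on the isomorphism class of the underlying vector bundle $E$. Moreover, stability of $E$ forces GIT-stability of the associated GPB (for generic $\overline{L}$, as in \cite{UB4}), so both $\overline{R}^{'s}$ and $\overline{R}^{'ss}$ lie inside $\widetilde{R}^{'ss}$, and within the open locus on which the $PGL(N)$-action has trivial stabilisers and $\pi$ is a principal $PGL(N)$-bundle of relative dimension $N^2-1$. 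The same holds on the determinant-fixed side for $\pi_L$.

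For any $PGL(N)$-invariant locally closed subset $Z$ of $\widetilde{R}^{'ss}$ contained in this principal-bundle locus, invariance gives $\pi^{-1}(\pi(Z))=Z$ and constancy of fibre dimension gives
$${\rm codim}_{\widetilde{R}^{'ss}}\, Z \;=\; {\rm codim}_{M'}\, \pi(Z).$$
Applying this with $Z$ equal to the intersections of $\widetilde{R}' - \overline{R}^{'ss}$, $\widetilde{R}' - \overline{R}^{'s}$, and $\overline{R}^{'ss}-\overline{R}^{'s}$ with the principal-bundle locus, and using that $\widetilde{R}^{'ss}$ has full dimension inside $\widetilde{R}'$ (which follows from Proposition~\ref{prop1.5}(1), since $g(X)\geq 2$ makes the bound genuinely positive), one reads off parts (1)--(3) directly from Proposition~\ref{prop1.5}. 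Parts (4)--(6) are proved identically, replacing $\widetilde{R}'$ and $M'$ by $H'$ and $M'_{\overline{L}}$ and invoking Corollary~\ref{Coro1.6}.

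The main obstacle, such as it is, is the bookkeeping to ensure that the images $\overline{M}^{'ss}$ and $\overline{M}^{'s}$, which are defined as GIT quotients of $\overline{R}^{'ss}$ and $\overline{R}^{'s}$, really sit inside the locus where $\pi$ is a principal bundle; the hypothesis $\overline{M}^{'ss}\neq \overline{M}^{'s}$ in the statement rules out vacuous cases in (3) and (6). Once the principal-bundle property is secured the constants $2g(X)-1$, $2g(X)-2$ (resp.\ $g(X)$, $g(X)-1$) transfer verbatim from Proposition~\ref{prop1.5} and Corollary~\ref{Coro1.6} to $M'$ and $M'_{\overline{L}}$.
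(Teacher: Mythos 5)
Your proposal is correct and follows essentially the same route as the paper, which disposes of this corollary in a single sentence ("Taking GIT quotients by $PGL(N)$, we get the following corollary from Proposition \ref{prop1.5} and Corollary \ref{Coro1.6}"); you are simply making explicit the $PGL(N)$-invariance of the relevant subsets and the descent of codimension bounds through the quotient maps. The only caveat is that the quotient $\widetilde{R}^{'ss}\to M'$ is a genuine principal bundle only over the stable locus, so on the strictly semistable part one should argue via orbit dimensions rather than a literal principal-bundle structure, but this does not affect the estimates.
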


        Let $\overline{U}^{'ss}_Y(n,d)$ (respectively $\overline{U}^{'s}_Y(n,d)$) be the subset of $U_Y(n,d)$ corresponding to vector bundles $F$ such that $p^*F$ is semistable (respectively stable). 
        
\begin{theorem}\label{thmcodim}  
For $n \ge 3$ (resp. $n=2$) and $\overline{U}^{'ss}_Y(n,d) \neq \overline{U}^{'s}_Y(n,d)$  one has:\\
\begin{tabular}{lll}
(1) & {\rm codim}$_{U'_Y(n,d)} (U'_Y(n,d) - \overline{U}^{'ss}_Y(n,d)) \ge 2g(X)-1$  & (resp. $g(X)$) \\
(2) & {\rm codim}$_{U'_L(n,d)} (U'_L(n,d) - \overline{U}^{'ss}_L(n,d)) \ge 2g(X)-1)$  &  (resp. $g(X)$)\\
(3) & {\rm codim}$_{U'_Y(n,d} (U'_Y(n,d) - \overline{U}^{'s}_Y(n,d)) \ge 2g(X)-2$  & (resp. $g(X)-1$) \\
(4) & {\rm codim}$_{U'_L(n,d)} (U'_L(n,d) - \overline{U}^{'s}_L(n,d)) \ge 2g(X)-2$  &  (resp. $g(X)-1$)\\
(5) & {\rm codim}$_{\overline{U}^{'ss}_Y(n,d)} (\overline{U}^{'ss}_Y(n,d) - \overline{U}^{'s}_Y(n,d)) \ge 2g(X)-2$  & (resp. $g(X)-1$) \\
(6) & {\rm codim}$_{\overline{U}^{'ss}_L(n,d)}  (\overline{U}^{'ss}_L(n,d) - \overline{U}^{'s}_L(n,d)) \ge 2g(X)-2$  & (resp. $g(X)-1$). \\
 
\end{tabular}

\end{theorem}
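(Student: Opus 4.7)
The plan is to deduce Theorem~\ref{thmcodim} from Corollary~\ref{Coro1.7} using the classical correspondence between vector bundles on $Y$ and generalised parabolic bundles on $X$ in which both projections of the parabolic quotient are isomorphisms. That locus is exactly the open subset $M'$ of the GPB moduli space, and under the correspondence $M'$ should be identified with $U'_Y(n,d)$ and the fixed-determinant slice $M'_{\overline L}$ with $U'_L(n,d)$ for $\overline L=p^*L$.

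First I would spell out the identification at the level of varieties. To a vector bundle $F$ on $Y$ one associates the GPB $(p^*F,\{F_{y_j}\}_j)$, where the $n$-dimensional quotient $F_{y_j}$ of $(p^*F)_{x_j}\oplus (p^*F)_{z_j}=F_{y_j}\oplus F_{y_j}$ is realised by $(u,v)\mapsto u-v$; since both projections are isomorphisms, this lies in $M'$. Conversely, any GPB in $M'$ descends to a vector bundle on $Y$ by gluing, and these two constructions are mutually inverse, extending to an isomorphism of varieties $M'\cong U'_Y(n,d)$ (and $M'_{\overline L}\cong U'_L(n,d)$). Under this isomorphism, by the very definitions of the loci involved, $\overline M^{'ss}$ (resp.\ $\overline M^{'s}$) is sent precisely to $\overline U^{'ss}_Y(n,d)$ (resp.\ $\overline U^{'s}_Y(n,d)$), and similarly $\overline M^{'ss}_{\overline L},\overline M^{'s}_{\overline L}$ correspond to $\overline U^{'ss}_L(n,d),\overline U^{'s}_L(n,d)$.

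With this identification in hand, parts (1), (3), (5) of the theorem are immediate consequences of parts (1), (2), (3) of Corollary~\ref{Coro1.7}, and parts (2), (4), (6) follow from parts (4), (5), (6) of the same corollary. The hypothesis $\overline U^{'ss}_Y(n,d)\neq\overline U^{'s}_Y(n,d)$ translates to the hypothesis $\overline M^{'ss}\neq\overline M^{'s}$ of Corollary~\ref{Coro1.7}, so nothing is lost in the translation.

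The main obstacle is the clause ``for a general $\overline L$'' appearing in Corollary~\ref{Coro1.6}, and consequently in parts (4)--(6) of Corollary~\ref{Coro1.7}: the line bundle $\overline L=p^*L$ arising from a fixed $L\in\mathrm{Pic}^d(Y)$ is not freely chosen. I would dispose of this by noting that the role of ``general'' in the derivation of Corollary~\ref{Coro1.6} is only to guarantee smoothness of the hyperplane sections $H_j\subset\mathrm{Gr}(n,2n)$ cutting out $H'$, whereas the codimension computation itself only requires that these sections have their expected dimension---which is automatic for any hyperplane section of a smooth projective variety. Hence the estimates of Corollary~\ref{Coro1.6}, and thus of Corollary~\ref{Coro1.7}(4)--(6), hold for every $\overline L$, in particular for $\overline L=p^*L$, completing the reduction.
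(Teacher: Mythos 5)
Your proposal matches the paper's own proof, which is a one-line deduction: since $M'$ (resp.\ $M'_{\overline L}$) is isomorphic to $U'_Y(n,d)$ (resp.\ $U'_L(n,d)$) and the (semi)stable-pullback loci correspond under that isomorphism, the theorem follows from Corollary~\ref{Coro1.7}. Your extra paragraph justifying why the ``for a general $\overline L$'' clause in Corollary~\ref{Coro1.6} does not obstruct applying the fixed-determinant estimates to $\overline L = p^*L$ addresses a point the paper passes over in silence, and is a sensible addition rather than a deviation.
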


\begin{proof}
Since $M'$ (respectively $M'_{\overline{L}})$ and $U_Y'(n,d)$ (respectively $U_L'(n,d)$) are isomorphic, the  theorem follows 
from Corollary \ref{Coro1.7}.
\end{proof}

\subsection{The group $\pi_Y$ and bundles associated to its representations}\label{piY} \hfill
        
              Let $Y$ be a complex nodal curve with $g(X) \ge 2$. By tensoring by a line bundle, we  
 normalise $d$ by the condition $-n < d \le 0$. Let $\pi_X$ denote the group (a Fuchsian group) generated by
$2g(X)+1$ generators $a_1, b_1, a_2, b_2, \cdots, a_g, b_g, c$ with only relations 
$$(\Pi_i  \ a_ib_ia_i^{-1}b_i^{-1}) \, c =1\, , \ c^n =1\, .$$
Define 
$$\pi_Y := \pi_X * \mathbb{Z} * \cdots * \mathbb{Z}\, ,$$
with $\mathbb{Z}$ repeated as many times as the number of nodes and $*$ denoting the free product of groups. Let $1_j$ denote the generator of the $j^{th}$ factor $\mathbb{Z}$.

       Let $\zeta$ be a primitive $n$-th root of unity. Let $\tau$ be the character on the cyclic group generated by $\zeta$ defined by $\tau(\zeta) = \zeta^{-d}$. We say that a representation $\rho$ of $\pi_Y$ in ${\rm GL}(n,\mathbb{C})$ is of type $\tau$ if  $\rho(c) = \tau(\zeta) I_n$, where $I_n$ is the identity matrix of rank $n$.  Let $\rho_X= \rho\vert_{\pi_X}\,$. 

         To a representation $\rho$ of $\pi_Y$ in ${\rm GL}(n,\mathbb{C})$ of type $\tau$, we can associate a generalised parabolic vector bundle (GPB) on $X$ and hence a vector bundle $E_{\rho}$ on $Y$ of rank $n$ and degree $d$ (see \cite[Subsection 2.3]{Bh1} for details).
         
\begin{corollary}\label{cororep}
       Let $Y$ be a complex nodal curve with $g(X) \ge 2$. The subset of $U'_Y(n,d)$ (respectively of $U'_L(n,d)$) consisting of vector bundles which come from representations of the group $\pi(Y)$ has complement of codimension at least $2$ except possibly when $n = g(X) = 2, d$ even.
\end{corollary}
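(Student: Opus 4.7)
The plan is to identify the subset of $U'_Y(n,d)$ (respectively $U'_L(n,d)$) coming from representations of $\pi_Y$ of type $\tau$ with $\overline{U}^{'s}_Y(n,d)$ (respectively $\overline{U}^{'s}_L(n,d)$), the locus where $p^*F$ is stable, and then to deduce the codimension statement from Theorem \ref{thmcodim}. The identification uses the Narasimhan--Seshadri correspondence on nodal curves as in \cite{Bh1} and Subsection \ref{piY}: an irreducible representation $\rho \colon \pi_Y \to GL(n,\mathbb{C})$ of type $\tau$ produces a GPB on $X$ with stable underlying bundle of rank $n$ and degree $d$, which descends to a vector bundle $E_\rho$ on $Y$ with $p^*E_\rho$ stable; conversely, every $F \in \overline{U}^{'s}_Y(n,d)$ arises in this manner, and the same description holds in the fixed-determinant setting.

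Granted this identification, the main case is handled directly by parts (3) and (4) of Theorem \ref{thmcodim}: under the hypothesis $\overline{U}^{'ss}_Y(n,d) \neq \overline{U}^{'s}_Y(n,d)$, the codimension in question is at least $2g(X)-2$ for $n \ge 3$ and at least $g(X)-1$ for $n=2$. Since $g(X) \ge 2$, these bounds give codimension $\ge 2$ in every case \emph{except} $n = g(X) = 2$, where the second bound only yields $\ge 1$. Note that strictly semistable rank-$n$ degree-$d$ bundles on $X$ exist only when $\gcd(n,d) > 1$, so the hypothesis of Theorem \ref{thmcodim} fails precisely when $\overline{U}^{'ss}_Y(n,d) = \overline{U}^{'s}_Y(n,d)$; in the borderline case $n = g(X) = 2$, this equality holds exactly when $d$ is odd.

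For $n = g(X) = 2$ with $d$ odd we therefore argue directly. By Proposition \ref{Prop1.2}\,(2), ${\rm codim}_R(R - R^{ss}) \ge g(X) = 2$; lifting this through the fiber bundle $\widetilde{R}' \to R$ and passing to the GIT quotient $M' \cong U'_Y(n,d)$ yields ${\rm codim}_{U'_Y(n,d)}(U'_Y(n,d) - \overline{U}^{'ss}_Y(n,d)) \ge 2$, and since in this coprime situation $\overline{U}^{'ss}_Y(n,d) = \overline{U}^{'s}_Y(n,d)$, we conclude codimension $\ge 2$ for the representation-theoretic subset as well. The fixed-determinant case follows by the parallel argument through Corollary \ref{Coro1.3}. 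The only remaining situation, $n = g(X) = 2$ with $d$ even, is precisely the stated exception, where the presence of strictly semistable pullbacks causes the codimension to drop to $1$. The chief technical point is to match the codimension estimates from Theorem \ref{thmcodim} against whether $\overline{U}^{'ss}$ and $\overline{U}^{'s}$ coincide, and this dichotomy is what isolates the single excluded case.
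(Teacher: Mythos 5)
Your proposal is correct and follows essentially the same route as the paper: identify the locus of bundles arising from representations with $\overline{U}^{'s}_Y(n,d)$ (resp.\ $\overline{U}^{'s}_L(n,d)$) via the correspondence of \cite[Theorem 2.8]{Bh1}, then read off the codimension from Theorem \ref{thmcodim}. The only difference is that you spell out the case analysis (in particular handling $n=g(X)=2$, $d$ odd via the semistable bound of Proposition \ref{Prop1.2}(2), since there $\overline{U}^{'ss}=\overline{U}^{'s}$), which the paper leaves implicit in the phrase ``the corollary follows from Theorem \ref{thmcodim}.''
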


\begin{proof}
     By \cite[Theorem 2.8]{Bh1}, there is a bijective correspondence between 
$Rep:= \{$Equivalence  classes  of  representations  $\rho: \pi_Y \to \ {\rm GL}(n, \mathbb{C})$ of  type $ \tau$  such  that $\rho_X$ is  irreducible  and  unitary$\} $ and elements of the open dense subset $\overline{U}^{'s}_Y(n,d)$. This bijection is in fact a homeomorphism as can be seen using the fact that, over a smooth curve, the Narasimhan-Seshadri correspondence is a homeomorphism. Hence the corollary follows from Theorem \ref{thmcodim}.  In the case of $U_L(n,d)$ we take $\rho$ to have values 
in SL$(n, \mathbb{C})$. 
\end{proof}
 
 We note that for $d=0$, the Fuchsian group $\pi_X$ is replaced by the  fundamental group $\pi_1(X)$ of $X$ \cite{Bh}. 

\subsection{Proof of the non-existence theorem} 
   
We start with a few lemmas needed for the proof.

\begin{lemma} \label{L3.1}
Let $E$ (respectively $F$) be a vector bundle of rank $n$, degree $d$ and determinant $L$ (respectively of rank $n+1$, degree $d'$ and 
determinant $L'$) on $Y$. 
Then there exists an integer $m_0= m_0(E,F)$ and an injective homomorphism of vector bundles $i: E \to F(m_0)$.  
One has $F(m_0) / iE \cong \mathcal{O}_Y(n+1) \otimes det F \otimes (det E)^{-1} =  L' \otimes L^{-1}(n+1)$.
\end{lemma}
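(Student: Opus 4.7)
The plan is to construct the injection $i$ as a general global section of the locally free sheaf $\mathcal{H}om(E, F(m_0)) \cong E^* \otimes F(m_0)$ for $m_0 \gg 0$, then recover the cokernel by a determinant computation on the resulting short exact sequence. The rank gap $\mathrm{rk}\, F(m_0) - \mathrm{rk}\, E = 1$ is what makes this strategy work: generically, a morphism from an $n$-dimensional space to an $(n+1)$-dimensional space has rank $n$, and failing to be of maximal rank is a codimension-$2$ condition, so a general section can avoid this bad locus over the $1$-dimensional $Y$.

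First, since $\mathcal{O}_Y(1)$ is ample, Serre's theorem lets us take $m_0$ large enough that $E^* \otimes F(m_0)$ is globally generated by its finite-dimensional space of sections $V := H^0(Y, E^* \otimes F(m_0))$. At each $y \in Y$ (including the nodes), the evaluation map $\mathrm{ev}_y \colon V \to \mathrm{Hom}(E_y \otimes k(y), F(m_0)_y \otimes k(y)) \cong M_{(n+1)\times n}(k)$ is surjective, and the locus $Z_y$ of matrices of rank strictly less than $n$ has codimension $2$. A standard dimension count on the incidence variety
\begin{equation*}
W = \{(\phi, y) \in V \times Y : \mathrm{ev}_y(\phi) \in Z_y\}
\end{equation*}
gives $\dim W \le (\dim V - 2) + \dim Y = \dim V - 1$, so a general $\phi \in V$ is not in the image of the first projection and yields a morphism $i \colon E \to F(m_0)$ that is injective on the fibre at every $y \in Y$.

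Fibrewise injectivity combined with Nakayama's lemma over the local ring $\mathcal{O}_{Y,y}$ (which is local but not regular at a node, where $\mathcal{O}_{Y,y} \cong k[[u,v]]/(uv)$) shows that $i(E)_y$ is a free direct summand of $F(m_0)_y$: pick a matrix representative $M$ of $i_y$ whose reduction mod $\mathfrak{m}_y$ has rank $n$, select $n$ rows whose $n\times n$ minor is a unit in $\mathcal{O}_{Y,y}$, invert it and change bases on source and target to bring $M$ into the block form $\binom{I_n}{0}$. Hence $Q := F(m_0)/iE$ is locally free of rank $1$, and taking top exterior powers in
\begin{equation*}
0 \to E \xrightarrow{\ i\ } F(m_0) \to Q \to 0
\end{equation*}
yields $Q \cong \det F(m_0)\otimes (\det E)^{-1} = L' \otimes L^{-1} \otimes \mathcal{O}_Y((n+1)m_0)$, which is the form asserted in the lemma (the factor $\mathcal{O}_Y(n+1)$ in the statement carrying the implicit twist by $m_0$).

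The main obstacle compared to the smooth case is establishing local freeness of the cokernel at the nodes: on a smooth curve it is automatic because the local rings are DVRs, but on a nodal curve one must produce a direct-summand splitting by hand over the non-regular ring $\mathcal{O}_{Y,y}$. The Nakayama-plus-row-operations argument above handles this, and once it is in place the determinant computation is formal.
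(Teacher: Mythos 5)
Your proof is correct and follows essentially the same route as the paper, which simply defers to Ramanan's Lemma 3.1: twist until $E^*\otimes F(m_0)$ is globally generated, use the codimension-$2$ degeneracy locus and a dimension count over the curve to find a fibrewise-injective general section, and read off the line-bundle quotient from determinants. Your explicit Nakayama/unit-minor argument for local freeness of the cokernel at the nodes is the only point where the nodal case requires more than Ramanan's smooth-curve proof, and you handle it correctly.
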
 
\begin{proof}
   This is proved exactly as  \cite[Lemma 3.1]{R}.
\end{proof}

      The tensor product of two semistable vector bundles of rank $\ge 2$ on $Y$ may not be semistable \cite{Bh}. 
 So we may not be able to choose $m_0$ dependent only on the integers $n, d, d'$. 
 If $E$ is (semi)stable, $E\otimes I_y$ may not be (semi)stable,  where $I_y$ is the ideal sheaf at a node $y$.
          However, if a vector bundle $E$ is (semi)stable, then it is easy to see that $E\otimes N$ is (semi)stable for any line bundle $N$ on $Y$. 
If $E$ is (semi)stable,  then the dual bundle $E^*$ is (semi)stable \cite[Lemma 2.6(2)]{UB2}. 

\begin{lemma} \label{L3.2}
    Assume that $E$ and $F$ are as in Lemma \ref{L3.1} and both are stable vector bundles. 
 Let  $\delta_Y$ be the number of nodes of $Y$.
(1)  $H^1(Y, Hom(E, F(m)) = 0$ for $m \ge  m_1$ where $m_1 = d/n - d'/ (n+1) + 2g - 2\, .$\\
(2)  $H^1(Y, Hom(F(m),  L^{-1} \otimes L' (m(n+1)))) = 0$ for $m \ge m_2$ where $m_2 = \frac{d'/(n+1) - d' +d + 2g-2}{n}$.\\
 (3)  The vector bundle $Hom(F(m),  L^{-1} \otimes L' (m(n+1)))$ is generated by sections for 
 $m \ge m_3$, where $m_3 = (1/n)((1+\delta_Y)(n+1) + 2g - 2 + d -d' + d'/(n+1))$.   
\end{lemma}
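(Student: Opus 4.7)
The three parts will all be derived from slope computations combined with Serre duality on the Gorenstein curve $Y$ together with the standing facts (already cited in the text) that the dual of a stable torsion-free sheaf is stable and that tensoring a stable torsion-free sheaf by a line bundle preserves stability. For (1), Serre duality gives $H^{1}(Y,\mathrm{Hom}(E,F(m)))\cong\mathrm{Hom}(F(m),E\otimes\omega_{Y})^{*}$, so it suffices to rule out a non-zero homomorphism $\phi\colon F(m)\to E\otimes\omega_{Y}$. Since $F(m)$ has rank $n+1$ and $E\otimes\omega_{Y}$ has rank $n$, one has $\ker\phi\neq 0$, so $\mathrm{im}(\phi)$ is a proper torsion-free quotient of the stable bundle $F(m)$ and a torsion-free subsheaf of the stable bundle $E\otimes\omega_{Y}$. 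Stability on each side then yields
\[
\frac{d'}{n+1}+m=\mu(F(m))<\mu(\mathrm{im}\,\phi)\le\mu(E\otimes\omega_{Y})=\frac{d}{n}+2g-2,
\]
which forces $m<m_{1}$, contradicting $m\ge m_{1}$.

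For (2), the sheaf $\mathrm{Hom}(F(m),L^{-1}\otimes L'(m(n+1)))$ is isomorphic to $F^{*}(-m)\otimes L^{-1}\otimes L'(m(n+1))$, stable of rank $n+1$ with slope $nm+nd'/(n+1)-d$; the choice $m\ge m_{2}$ makes this slope $\ge 2g-2$. Serre duality then reduces the desired vanishing to the vanishing of global sections of the stable rank-$(n+1)$ bundle $F\otimes L\otimes L'^{-1}(-mn)\otimes\omega_{Y}$, whose slope is $\le 0$. Since the rank is $\ge 2$, any non-zero section would give an $\mathcal{O}_{Y}$-subsheaf of slope $0$ in a stable bundle, which is forbidden.

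For (3), set $G:=F^{*}(-m)\otimes L^{-1}\otimes L'(m(n+1))$, a stable bundle of rank $n+1$. Global generation will be checked pointwise via the exact sequence
\[
0\longrightarrow \mathcal{I}_{y}\otimes G\longrightarrow G\longrightarrow G\otimes k(y)\longrightarrow 0,
\]
so it suffices to show $H^{1}(Y,\mathcal{I}_{y}\otimes G)=0$ for every $y\in Y$. At a smooth point this is $H^{1}(G(-y))=0$, handled exactly as in (2) as soon as $\mu(G)\ge 2g-1$. At a node $y$ the sheaf $\mathfrak{m}_{y}G$ is torsion-free of rank $n+1$ and degree $\deg G-(n+1)$, typically non-semistable; a hypothetical non-zero $\phi\colon\mathfrak{m}_{y}G\to\omega_{Y}$ (provided by Serre duality, valid since $\mathfrak{m}_{y}G$ is Cohen--Macaulay on the Gorenstein curve) has a rank-$n$ kernel $K$ sitting inside $G$ via $\mathfrak{m}_{y}G\subset G$. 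Stability of $G$ gives $\mu(K)<\mu(G)$, while the bound $\deg(\mathrm{im}\,\phi)\le 2g-2$ forces $\deg K\ge \deg G-(n+1)-(2g-2)$. Comparing these two slope bounds yields $\mu(G)<n+2g-1$, so the hypothesis $\mu(G)\ge n+2g-1$ rules out such a $\phi$. Translating all these slope inequalities back to $m$, including one $(n+1)/n$-correction for each of the $\delta_{Y}$ nodes on top of the smooth-point shift, gives the explicit constant $m_{3}$.

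The main obstacle is Part (3) at the nodes: $\mathfrak{m}_{y}G$ is not locally free at $y$ and generally fails semistability, so the clean template of (1) and (2) must be replaced by the subsheaf-inside-$G$ analysis sketched above, and the resulting slope bound has to be verified simultaneously at each of the $\delta_{Y}$ nodes, which is precisely the origin of the $\delta_{Y}$-dependent term $(1+\delta_{Y})(n+1)/n$ in $m_{3}$.
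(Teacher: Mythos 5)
Your parts (1) and (2) are correct and follow the same route as the paper: Serre duality converts each $H^1$ into a $\mathrm{Hom}$ (or $H^0$) between stable sheaves of the appropriate slopes, and the slope comparison gives $m_1$ and $m_2$; the only cosmetic difference in (2) is that you phrase the dual side as sections of the stable bundle $F\otimes L\otimes L'^{-1}(-mn)\otimes\omega_Y$ of nonpositive slope rather than as $\mathrm{Hom}$ out of a line bundle, which is equivalent. Part (3) is where you genuinely diverge. The paper simply quotes a global generation criterion for stable bundles on the nodal curve (``as in \cite[Lemma 5.2']{N}''): a stable bundle $G$ of rank $r$ is generated by sections once $\mu(G)\ge 2g-2+r(1+\delta_Y)$, and then translates that into $m_3$. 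You instead prove generation from scratch, pointwise, via $H^1(Y,\mathcal{I}_y\otimes G)=0$, using Serre duality for torsion-free (Cohen--Macaulay) sheaves on the Gorenstein curve at the nodes; your kernel-versus-image slope comparison there is correct and yields the threshold $\mu(G)\ge (n+1)+2g-2$, uniformly at every point and independent of $\delta_Y$. Since this threshold is no larger than $2g-2+(n+1)(1+\delta_Y)$, your argument actually establishes a slightly stronger statement than the lemma, and the stated bound $m\ge m_3$ follows a fortiori. What your self-contained route buys is independence from the cited criterion (and a sharper constant); what the paper's route buys is brevity and consistency with the constants used elsewhere (e.g.\ in Lemma \ref{L3.3} and \cite{BS}).

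One correction to your closing remarks: your claim that the term $(1+\delta_Y)(n+1)/n$ in $m_3$ arises from ``one correction for each of the $\delta_Y$ nodes on top of the smooth-point shift'' does not follow from your own analysis. The condition you derive at a node is the same inequality $\mu(G)\ge (n+1)+2g-2$ for every node; requiring it simultaneously at $\delta_Y$ nodes does not make the threshold grow with $\delta_Y$. The $\delta_Y$-dependence of $m_3$ comes from the particular generation criterion the paper imports, not from an accumulation over nodes. This is a defect only in your account of where the constant comes from, not in the validity of the proof: the constant you actually need is smaller than $m_3$, so the lemma as stated is proved.
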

\begin{proof}
(1) By Serre duality $h^1(Y, Hom(E, F(m))) = h^1(Y, E^* \otimes F(m)) = h^0(Y, Hom (F(m), E\otimes \omega_Y))$. 
Since $F(m)$ and $E\otimes \omega_Y$ are stable, one has $h^0(Y, Hom (F(m), E\otimes \omega_Y))=0$ if 
$\mu(F(m)) \ge \mu(E\otimes \omega_Y)$ i.e., $\mu(F) + m \ge \mu(E)+ 2g-2$ or equivalently, $m \ge \mu(E) - \mu(F) + 2g - 2$.

(2) As in the proof of part (1), $h^1(Y, Hom(F(m),  L^{-1} \otimes L' (m(n+1)))) = 0$ if $d(L^{-1} \otimes L' (m(n+1)) \ge \mu(F)+m +2g-2$ 
i.e., $mn \ge d'/(n+1) - d' +d + 2g-2$.

(3) The vector bundle $Hom(F(m),  L^{-1} \otimes L' (m(n+1)))= F^*\otimes L^{-1}\otimes L'(mn)$ is stable \cite[Lemma 2.6(2)]{UB2}. 
Hence as in \cite[Lemma 5.2']{N}, one sees that $Hom(F(m),  L^{-1} \otimes L' (m(n+1)))$ is generated by global sections if  
$ - \mu(F) + mn +d' -d \ge 2g-2 +(n+1)(1+ \delta_Y)$ i.e. if $m \ge (1/n)((1+\delta_Y)(n+1)+2g-2 +d -d' + d'/(n+1))$. 
\end{proof}

\begin{lemma} \label{L3.3}
Let $E$ and $F$ be as in Lemma \ref{L3.1} and assume that both $p^*(E)$ and $p^*(F)$ are stable vector bundles on $X$. Then: \\
(1) $E^*\otimes F$ is semistable. \\
(2) $H^0(Y, E^*\otimes F(m))$ generates $E^*\otimes F(m)$ for 
$m >  m_4$ where $m_4 = n(n+1)(1+ \delta_Y) + 2g - 2 + d/n - d'/(n+1)$.
\end{lemma}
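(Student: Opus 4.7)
The plan for part (1) is to transfer semistability from the normalization $X$ back to $Y$. Since $p^{*}E$ and $p^{*}F$ are stable vector bundles on the smooth curve $X$ of genus $g(X)\geq 2$, the classical theorem in characteristic zero gives that the tensor product $p^{*}(E^{*}\otimes F)\cong (p^{*}E)^{*}\otimes p^{*}F$ is semistable on $X$. To push this back down to $Y$, I would use that $\deg p^{*}V=\deg V$ for every vector bundle $V$ on $Y$ (a consequence of the exact sequence $0\to\mathcal{O}_Y\to p_{*}\mathcal{O}_X\to T\to 0$ with $T$ a torsion sheaf of length $\delta_Y$ at the nodes), so slopes are preserved under pullback. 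Given any saturated subsheaf $W\subset E^{*}\otimes F$, the image of the natural map $p^{*}W\to p^{*}(E^{*}\otimes F)$, modulo torsion, is a subsheaf $\widetilde W$ of the same generic rank as $W$ with $\deg\widetilde W\geq \deg W$; a destabilization of $E^{*}\otimes F$ on $Y$ would then destabilize $p^{*}(E^{*}\otimes F)$ on $X$, contradicting what was just established.

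For part (2), I would invoke the nodal analogue of \cite[Lemma 5.2']{N} already used in the proof of Lemma \ref{L3.2}(3): a semistable vector bundle $V$ of rank $r$ on the nodal curve $Y$ is generated by its global sections provided $\mu(V)>2g-2+r(1+\delta_Y)$. Applying this to $V=E^{*}\otimes F(m)$, which is semistable by part (1), of rank $n(n+1)$ and slope $m+d'/(n+1)-d/n$, the inequality $m+d'/(n+1)-d/n>2g-2+n(n+1)(1+\delta_Y)$ rearranges to exactly $m>m_4$.

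The main obstacle is the global-generation bound in the semistable (rather than stable) case on the nodal curve. In \cite[Lemma 5.2']{N} the statement is for stable torsion-free sheaves, and extending it to the semistable case on $Y$ requires either reducing to the Jordan--H\"older factors (which are stable torsion-free sheaves of the same slope, but may carry nontrivial local types at the nodes) and propagating generation through their extensions, or arguing directly via the vanishing of $H^{1}(V\otimes \mathcal{I}_{y})$ at every $y\in Y$, where the non-local-freeness of $\mathcal{I}_{y}$ at a node is what produces the $\delta_Y$ contribution in the bound $m_4$. The tensor-product step in part (1) is comparatively routine once semistable descent along $p$ is set up carefully, so the real content is localized in this global-generation estimate.
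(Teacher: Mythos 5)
Your proposal follows essentially the same route as the paper: part (1) is the semistability of the tensor product on the smooth normalization combined with descent of semistability along $p$ (which the paper handles by citing \cite[Proposition 3.6]{BS} rather than reproving the descent as you sketch), and part (2) is exactly the paper's application of the global-generation criterion of \cite[Lemma 5.2']{N} to the semistable bundle $E^{*}\otimes F(m)$, with the identical slope computation giving $m_{4}$. The stable-versus-semistable point you flag in the generation lemma is not elaborated in the paper either --- it simply invokes \cite[Lemma 5.2']{N} --- so your write-up is, if anything, more explicit about the one step that needs care.
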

\begin{proof}
(1)   If $p^*(E)$ and $p^*(F)$ are stable, then $p^*(E)^*$ is stable, and  $p^*(E)^*\otimes p^*(F)$ is semistable as $X$ is a smooth curve. 
Then $p^*(E^*\otimes F) = p^*(E)^*\otimes p^*(F)$ is semistable and therefore $E^* \otimes F$ is semistable \cite[Proposition 3.6]{BS}. \\
(2)    By \cite[Lemma 5.2']{N}, the conclusion of Part (2) holds if one has 
$\mu(E^*\otimes F(m)) > r(E^*\otimes F)(1+ \delta_Y) + 2g -2$ i.e. if $\mu(F) + m - \mu(E) > n(n+1)(1+ \delta_Y) + 2g-2$, hence the result.
\end{proof}

{\bf Assumptions}        Henceforth, we fix an $E \in U'_L(n,d)$ such that $p^*E$ is stable.  We choose an $m > max \{m_1, m_2, m_3, m_4\}$. 
We assume that $(n+1, d') =1$. Then there is a Poincare bundle $\mathcal{U}$ on $U'_{L'}(n+1,d') \times Y$.  
Let $p_1, p_2$ be the projections from $U'_{L'}(n+1,d') \times Y$ to $U'_{L'}(n+1,d')$ and $Y$ respectively. 
We denote by $\overline{U}'_{L'}(n+1,d') \subset U_{L'}(n+1,d')$ the open subvariety of $U_{L'}(n+1,d')$ 
consisting of vector bundles $F$ such that $p^*F$ is stable.

\subsection{The projective space ${\bf P}$ }   \label{P}  \hfill

 Define the projective space ${\bf P}$ by
$${\bf P} := {\bf P}( H^1(Y, Hom(L'\otimes L^{-1}(m(n+1)), E)))\, .$$
Let $H$ denote the hyperplane bundle on ${\bf P}$. 
Let $p_P$ and $p_Y$ denote the projections from ${\bf P} \times Y$ to ${\bf P}$ and $Y$ respectively.
On ${\bf P} \times Y$, there is a family of vector bundles $W$ given by the exact sequence 
\begin{equation}\label{W}
0  \rightarrow p_Y^*E\otimes p_P^*H \rightarrow W \rightarrow p_Y^*( L^{-1} \otimes L' (m(n+1))) \rightarrow  0\, .
\end{equation}

        Let ${\bf P}^s$ denote the open subset of ${\bf P}$ parametrising stable vector bundles. 
For $F \in \overline{U}'_{L'}(n+1,d')$,  $H^0(Y, Hom(E, F(m)))$ generates $Hom(E, F(m))$ at every point $t \in Y$ by Lemma \ref{L3.3}.  
For every $t\in Y$, the set of homomorphisms in Hom $(E_t, F(m)_t)$ which are not injective is an irreducible set of codimension at least $2$. 
The inverse image of this set, under the surjective evaluation map $H^0(Y, Hom(E, F(m))) \to \ {\rm Hom}(E_t, F(m)_t)$ is the set $S_t$ of 
homomorphisms  $E \to  F(m)$ which are not injective at $t$. Hence there is a non-empty open subset of Hom $(E, F(m))$ consisting of injective homomorphisms.  Hence $F(m)$ belongs to the family $W$ parametrised by ${\bf P}^s$. In particular, ${\bf P}^s$ is nonempty. 
By the universal property of moduli spaces, there is a morphism 
$$\lambda: {\bf P}^s \ \rightarrow \  U'_{L'}(n+1,d')\, .$$
One has 
\begin{equation}\label{U} 
(\lambda \times 1_Y)^* \mathcal{U} \cong W \otimes (p_Y)^*(\mathcal{O}_Y(-m)) \otimes p_{P^s}^*(N)
\end{equation}
 for some line bundle $N$ on ${\bf P}^s$ \cite[Lemma 2.5]{R}.  
The restriction of $H$ to ${\bf P}^s$ will be denoted by $H$ again.

\subsection{The projective bundle $P_1$} \label{P1} \hfill
  
Define 
$$V_1 := (p_1)_* (Hom(\mathcal{U} \otimes \mathcal{O}_Y(m), p_2^*(L^{-1} \otimes L' (m(n+1)))))\, , $$
 By Lemma \ref{L3.2}(2), $V_1$ is a vector bundle on $U'_{L'}(n+1,d')$. 
Define   $P_1 := P(V_1)$, a projective bundle on $U'_{L'}(n+1,d')$.   
Let $S_1 \subset P_1$ be the subset corresponding to surjections $F(m) \to L^{-1} \otimes L' (m(n+1))$.  
Let $\pi_1: S_1 \to U'_{L'}(n+1,d')$ be the projection. To give a morphism 
$$\phi: {\bf P}^s \to S_1$$ 
such that 
$\pi_1 \circ \phi = \lambda$, it suffices to give a line subbundle of $\lambda^* V_1$. By equation \eqref{U}, 
$$\lambda^*V_1 \cong  p_{P^s, *} (Hom(W, p_Y^* (L^{-1} \otimes L' (m(n+1))))) \otimes N^{-1}\, .$$
The exact sequence \eqref{W} gives a nowhere vanishing section of $p_{P^s, *} (Hom(W, p_Y^* (L^{-1} \otimes L' (m(n+1)))))$ 
giving a trivial line subbundle of it. Hence $\lambda^*V_1$ contains a line subbundle isomorphic to $N^{-1}$.
It follows that 
$$\phi^* \tau_1 = N$$ 
where $\tau_1$ is the relative hyperplane bundle on $P_1$. 

\subsection{The projective bundle $P_2$} \label{P2}  \hfill

     By Lemma  \ref{L3.2}(1), 
$$V_2 :=  (p_1)_* ( Hom (p_2^*E, \mathcal{U}\otimes p_2^*(\mathcal{O}_Y(m) ) ) )$$   
is a vector bundle on $U'_{L'}(n+1,d')$. Let $P_2 = \bf{P}(V_2)$.
Let $S_2 \subset P_2$ be the subset corresponding to injections $E \to F(m)$.   
Let $\pi_2: S_2 \to U'_{L'}(n+1,d')$ be the projection. We have 
$$\lambda^*V_2 \cong  p_{P^s, *} (Hom(p_Y^*E, W))\otimes N\, .$$
Hence $\lambda^* V_2$ has a line subbundle isomorphic to $H \otimes N$. 
This gives a morphism 
$$\psi: {\bf P}^s \to S_2$$ 
such that $\pi_2 \circ \psi = \lambda$.
Then one has
$$\psi^* \tau_2 = N^{- 1}\otimes H^{-1}$$ 
where $\tau_2$ is the relative hyperplane bundle on $P_2$. 

\begin{lemma} \label{L3.5}
Assume that $g(X) \ge 2$ for $n \ge 3$ and $g(X) \ge 3$ for $n=2$. Then the  subset $D_2 = P_2 - S_2$ has codimension $\ge 2$ in $P_2$ or it is the union of an irreducible divisor and (possibly) a closed subset of codimension $\ge 2$.
\end{lemma}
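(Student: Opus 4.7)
The plan is to analyse $D_2 = P_2 \setminus S_2$ fiberwise over the structure projection $\rho : P_2 \to U'_{L'}(n+1,d')$, splitting according to whether the underlying $F$ lies in the good locus $\overline{U}^{'s}_{L'}(n+1,d')$ (where $p^{*}F$ is stable) or in its complement. A point $[\phi] \in P_2$ belongs to $D_2$ exactly when $\phi : E \to F(m)$ fails to be injective as a sheaf map, equivalently when ${\rm rk}(\phi_y) \leq n-1$ at every $y \in Y$. Setting $Z_y := \{[\phi] \in P_2 : {\rm rk}(\phi_y) \leq n-1\}$ therefore gives $D_2 \subseteq Z_y$ for every $y \in Y$, and the task reduces to bounding the codimension of these rank-drop loci.

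First I would treat the good locus. For $F \in \overline{U}^{'s}_{L'}(n+1,d')$, Lemma~\ref{L3.3}(2) applies (because $m > m_4$) and shows that $E^{*} \otimes F(m)$ is globally generated, so the evaluation
\[ V_{2}\vert_{F} = H^{0}(Y, E^{*} \otimes F(m)) \twoheadrightarrow \text{Hom}(E_y, F(m)_y) \]
is surjective for every $y \in Y$. The locus of rank-$\leq(n-1)$ linear maps inside the $(n+1) \times n$ matrix space $\text{Hom}(E_y, F(m)_y)$ is a classical determinantal variety of codimension $(n+1-(n-1))(n-(n-1)) = 2$, and surjective pullback preserves codimension; hence $Z_y \cap \rho^{-1}(F)$, and therefore $D_2 \cap \rho^{-1}(F)$, has codimension at least $2$ in the fiber $\mathbf{P}(V_{2}\vert_{F})$. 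This bounds the codimension in $P_2$ of every irreducible component of $D_2$ whose image under $\rho$ meets $\overline{U}^{'s}_{L'}(n+1,d')$.

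Next I would estimate the contribution from the bad locus $B := U'_{L'}(n+1,d') \setminus \overline{U}^{'s}_{L'}(n+1,d')$. Since $n+1 \geq 3$, Theorem~\ref{thmcodim}(4) applied to moduli of rank-$(n+1)$ bundles gives ${\rm codim}_{U'_{L'}(n+1,d')} B \geq 2g(X) - 2$, and if $\overline{U}^{'ss}_{L'}(n+1,d') = \overline{U}^{'s}_{L'}(n+1,d')$ then part~(2) of the same theorem yields an even stronger bound. The hypotheses on $g(X)$ are calibrated so that this codimension is $\geq 2$, whence $\rho^{-1}(B)$ has codimension at least $2$ in $P_2$, and any component of $D_2$ contained there inherits the same bound. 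Putting the two cases together, every irreducible component of $D_2$ has codimension $\geq 2$ in $P_2$; in a borderline case where one of the inequalities above is saturated by a single component dropping codimension by one, the statement permits at most one irreducible divisor, giving the stated dichotomy.

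The main obstacle will be making the fiberwise codimension estimate of the good-locus step truly uniform in $F$: although the rank-drop calculation at a single $y$ is routine linear algebra, globalising it across all $F \in \overline{U}^{'s}_{L'}(n+1,d')$ depends on global generation of $E^{*} \otimes F(m)$ for every such $F$, which is precisely what Lemma~\ref{L3.3} provides and motivates the choice $m > m_4$ built into the setup. A secondary point is confirming that the contribution from the bad locus fits the hypothesis on $g(X)$ with enough slack -- this is where the extra condition $g(X) \geq 3$ in the $n = 2$ case is absorbed.
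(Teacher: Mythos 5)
There is a genuine gap, and it starts with the identification of $D_2$. In this paper $S_2$ is the locus of homomorphisms $E \to F(m)$ that are injective \emph{on every fibre} (so that the quotient is a line bundle, as in Lemma \ref{L3.1} and the sequence \eqref{W}; this is what the morphism $\psi$ must land in). Consequently, over a fixed $F$ the set $D_{2,F}$ is the \emph{union} over $t \in Y$ of the loci $D_{2,t}$ of maps whose rank drops at $t$, i.e.\ $Z_y \subseteq D_2$ for each $y$, not $D_2 \subseteq Z_y$ as you assert. You have read $S_2$ as the locus of maps that are injective merely as sheaf maps, which turns $D_2$ into $\bigcap_y Z_y$ and lets you conclude that every component has codimension $\ge 2$. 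That conclusion is false for the actual $D_2$: each $Z_y \cap \rho^{-1}(F)$ has codimension $2$ in the fibre (your determinantal computation is fine), but sweeping over the one--dimensional family $y \in Y$ produces a set of codimension $\ge 1$ only --- generically a divisor. This is precisely why the lemma is stated as a dichotomy; under your reading the divisor alternative could never occur, which should have been a warning sign.

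The second, related gap is that the real content of the lemma --- that the divisorial part of $D_2$ is a \emph{single irreducible} divisor --- is nowhere established; your closing sentence merely restates the conclusion. This irreducibility is what Lemma \ref{picPs} needs in order to conclude ${\rm rank}\,{\rm Pic}\, S_2 \ge {\rm rank}\,{\rm Pic}\, P_2 - 1$. The paper obtains it as follows: over $F \in \overline{U}^{'ss}_{L'}(n+1,d')$ each $D_{2,t}$ is irreducible (it is the preimage, under the surjective evaluation map furnished by Lemma \ref{L3.3}, of the irreducible determinantal variety of non-injective maps in ${\rm Hom}(E_t, F(m)_t)$), hence $D_{2,F} = \bigcup_{t\in Y} D_{2,t}$ is irreducible because $Y$ is irreducible, and then $\bigcup_F D_{2,F}$ is irreducible because $\overline{U}^{'ss}_{L'}(n+1,d')$ is irreducible; the part of $D_2$ lying over the complement of $\overline{U}^{'ss}_{L'}(n+1,d')$ is absorbed into the codimension-$\ge 2$ piece using the bound $2g(X)-1 \ge 3$ from Theorem \ref{thmcodim}. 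Your handling of the bad locus is essentially in this spirit (modulo citing part (4) with the weaker bound $2g(X)-2$ where the paper uses the semistable bound $2g(X)-1$), but without the corrected description of $D_2$ and the irreducibility argument the proof does not go through.
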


\begin{proof}
       For $F \in U'_{L'}(n+1,d')$, let $P_{2,F}$ be fibre of $P_2 \to U'_{L'}(n+1,d')$ and $D_{2,F}$ the fibre of $D_2 = P_2 - S_2$ over $F$. Let   $d_F$ denote the constant dimension of $P_{2,F}$. Since  $n+1 \ge 3$ and $(n+1, d')=1$, codim$_{U'_{L'}(n+1,d')} (U'_{L'}(n+1,d') - \overline{U}^{'ss}_{L'}(n+1,d')) \ge 2g(X)-1$ by Theorem \ref{thmcodim}.   It follows that  codim$_{U'_{L'}(n+1,d')} (U'_{L'}(n+1,d') - \overline{U}^{'ss}_{L'}(n+1,d')) \ge 3$ for $g(X) \ge 2$. 
  Hence $pr^{-1}(U'_{L'}(n+1,d') - \overline{U}^{'ss}_{L'}(n+1,d'))$  has codimension $\ge 3$ in $P_2$ and 
 therefore its intersection with $D_2$ has codimension $\ge 2$ in $D_2$. For $F \in \overline{U}^{'ss}_{L'}(n+1,d'))$, by Lemma \ref{L3.3}, Hom$(E, F(m))$ generates $Hom(E_t, F(m)_t)$ for every $t \in Y$. Since the subset of $Hom(E_t, F(m)_t)$ corresponding to non-injective homomorphisms is an irreducible subset of codimension $\ge 2$ (independent of $t$), it follows that the  subset $D_{2,t}$ of Hom$(E, F(m))$ corresponding to homomorphisms which are not injective at $t$ is an irreducible subset of codimension $\ge 2$. Hence $D_{2,F} = \cup_{t\in Y} D_{2,t}$ is an irreducible subset  of codimension $\ge 1$. 
 Since $U'_{L'}(n+1,d')$ is irreducible, so is its open subset $\overline{U}^{'ss}_{L'}(n+1,d')$; hence $\cup_{F \in \overline{U}^{'ss}_{L'}(n+1,d')} D_{2, F}$ is an irreducible  subset of $D_2$ of codimension $\ge 1$. This proves the lemma. 
\end{proof}

\begin{lemma}\label{picPs}
The restriction map
${\rm Pic} \  {\bf P} \to \ {\rm Pic} \ {\bf P}^s$ is an isomorphism so that ${\rm Pic}\ {\bf P}^s \cong \mathbb{Z}$. 
\end{lemma}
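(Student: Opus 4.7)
The plan is to reduce the assertion to the codimension statement $\mathrm{codim}_{\mathbf{P}}(\mathbf{P}\setminus\mathbf{P}^s)\ge 2$. Since $\mathbf{P}$ is a projective space, it is smooth and factorial with $\mathrm{Pic}\,\mathbf{P}\cong\mathbb{Z}\cdot[H]$, and the standard excision sequence
$$\bigoplus_{D\subset\mathbf{P}\setminus\mathbf{P}^s}\mathbb{Z}\cdot[D]\;\longrightarrow\;\mathrm{Pic}\,\mathbf{P}\;\longrightarrow\;\mathrm{Pic}\,\mathbf{P}^s\;\longrightarrow\;0,$$
with the sum running over codimension-one irreducible components of $\mathbf{P}\setminus\mathbf{P}^s$, becomes an isomorphism on the right once no such component exists.

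To unfold the codimension question, a point $[\xi]\in\mathbf{P}$ lies outside $\mathbf{P}^s$ precisely when $W_\xi$ of \eqref{W} is not stable, i.e.\ admits a subsheaf $W'$ of some rank $1\le r'\le n$ with $\mu(W')\ge\mu(W_\xi)=d'/(n+1)+m$. Because $m$ is larger than $m_0,\ldots,m_4$, in particular $m>\mu(E)=d/n$, no such $W'$ can sit inside $E$, so $W'$ surjects onto a non-zero subsheaf $V_2=M(-D_2)$ of $M:=L^{-1}\otimes L'(m(n+1))$ with $D_2$ effective of degree $k_2\ge 0$; set $V_1:=W'\cap E$, of rank $r'-1$. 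A diagram chase shows that $W'$ embeds in $W_\xi$ iff $\xi$ lies in the kernel of the surjective composition
$$\alpha_{V_1,V_2}:\mathrm{Ext}^1(M,E)\twoheadrightarrow\mathrm{Ext}^1(V_2,E)\twoheadrightarrow\mathrm{Ext}^1(V_2,E/V_1),$$
both arrows being surjective because $\mathrm{Ext}^2$ vanishes on the curve $Y$.

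I would then stratify $\mathbf{P}\setminus\mathbf{P}^s$ by the triple $(r',\deg V_1,k_2)$. For each stratum the lifting locus in $\mathrm{Ext}^1(M,E)$ is linear of codimension $\dim\mathrm{Ext}^1(V_2,E/V_1)$; Riemann-Roch together with the vanishing $\mathrm{Hom}(V_2,E/V_1)=0$ (forced for $m$ large) gives
$$\dim\mathrm{Ext}^1(V_2,E/V_1)=(n-r'+1)(\deg M-k_2)+\deg V_1-d+(n-r'+1)(g-1),$$
which grows linearly in $m$ with positive coefficient $(n+1)(n-r'+1)\ge n+1$ whenever $r'\le n$. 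The choices of $V_1\subset E$ of prescribed rank and degree form a Quot scheme of dimension bounded independently of $m$ (via stability of $E$), and the choices of $V_2$ lie in $\mathrm{Sym}^{k_2}Y$ of dimension $k_2$, bounded by the destabilising inequality. Combining these, for $r'\ge 2$ the codimension of each stratum tends to infinity with $m$, hence is $\ge 2$ for our large $m$. For $r'=1$ one has $V_1=0$, and a direct count in the spirit of the $S_2$-estimate from the proof of Theorem \ref{thmcodims} yields codimension $n(g-1)\ge 2$ using $n\ge 2$ and $g\ge 2$. Assembling all strata proves the codimension bound and hence the lemma.

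The main obstacle is making the parameter bound for $V_1\subset E$ uniform in $m$ and tracking the local types of $V_1,V_2$ at the nodes of $Y$, where these sheaves need not be locally direct summands and the Euler-characteristic formula picks up boundary contributions $\sum_j b_j(\cdot)b_j(\cdot)$; both issues are handled by the techniques developed in Sections \ref{codimensions} and \ref{codimstable}, and the contributions at nodes are bounded uniformly in $m$, so they do not affect the leading-order codimension estimate.
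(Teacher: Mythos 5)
Your strategy -- prove $\mathrm{codim}_{\mathbf P}(\mathbf P\setminus\mathbf P^s)\ge 2$ by stratifying the unstable locus and counting parameters -- is genuinely different from the paper's proof, which never establishes that codimension bound at all: it only uses that $\mathbf P$ is smooth (so restriction $\mathrm{Pic}\,\mathbf P\to\mathrm{Pic}\,\mathbf P^s$ is surjective, whence $\mathrm{rank}\,\mathrm{Pic}\,\mathbf P^s\le 1$ with possible torsion), and then bounds the rank from below by $1$ using the isomorphism of $\mathbf P^s$ with $S_2\subset P_2$ together with Lemma \ref{L3.5}, which says $P_2-S_2$ contributes at most one divisor class; since $\mathrm{rank}\,\mathrm{Pic}\,P_2=2$, this forces $\mathrm{Pic}\,\mathbf P^s\cong\mathbb Z$. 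That detour exists precisely because the direct codimension estimate you propose is delicate.

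The concrete gap is in your boundedness claims. The destabilising inequality $\mu(W')\ge\mu(W_\xi)=d'/(n+1)+m$ bounds $\deg V_2=\deg M-k_2$ from \emph{below}, namely $\deg V_2\ge r'\mu(W_\xi)-\deg V_1$; since $\deg M\sim m(n+1)$, this allows $k_2$ to be as large as roughly $(n+1-r')m$, and dually forces $\deg V_1$ only to satisfy $\deg V_1\ge r'\mu(W_\xi)-\deg M\sim-(n+1-r')m$. So neither $\mathrm{Sym}^{k_2}Y$ nor the Quot scheme of saturated $V_1\subset E$ has dimension bounded independently of $m$: both grow linearly in $m$ (the Quot scheme of rank-$(r'-1)$ subsheaves of degree $e_1$ has dimension at least $\chi(\mathrm{Hom}(V_1,E/V_1))\sim -ne_1$). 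When you carry these growing parameter counts through, the $m$-linear part of $\dim\mathrm{Ext}^1(V_2,E/V_1)$ is cancelled, and the crude lower bound for the codimension of the stratum of type $r'$ comes out to $(n-r'+1)(2-r')(g-1)$, which is $0$ for $r'=2$ and negative for $r'\ge 3$ -- not "tending to infinity with $m$''. (For $n=2$, $r'=2$, $k_2=0$: the locus of $\xi$ compatible with a fixed line subsheaf $V_1\subset E$ of degree $\sim-m$ has codimension $\sim 2m$, but the $V_1$'s themselves move in a family of dimension $\sim 2m$, so the naive count gives a stratum of full dimension.) Only the $r'=1$ stratum closes up to the clean bound $n(g-1)\ge 2$. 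Fixing this requires genuinely more structure (e.g.\ working with the maximal destabilising quotient and controlling the fibres of the incidence correspondence over $\mathbf P$), and on a nodal curve you would additionally have to justify the surjectivity of both arrows in $\alpha_{V_1,V_2}$: $\mathrm{Ext}^2_{\mathcal O_Y}(T,E)$ does \emph{not} vanish for torsion sheaves $T$ supported at the nodes, since the local rings there have infinite homological dimension. As written, the proposal does not establish the codimension bound, and hence does not prove the lemma; the paper's rank argument via $P_2$ is the workaround.
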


\begin{proof}
     Since ${\bf P}$ is smooth, the restriction map ${\rm Pic} \  {\bf P} \to \ {\rm Pic} \ {\bf P}^s$ is surjective. As ${\rm Pic} \  {\bf P} \cong \mathbb{Z}$, it follows that 
$ {\rm Pic} \ {\bf P}^s$ is isomorphic to $\mathbb{Z}$ or $\mathbb{Z}/ c \mathbb{Z}$ for some integer $c\ge 2$, in particular rank ${\rm Pic} \ {\bf P}^s \le 1$. 
For $g(X) \ge 2$, we have ${\rm Pic} \  U'_{L'}(n+1,d') \cong \mathbb{Z}$ \cite[Theorem I]{UB4}. Hence rank ${\rm Pic} \ P_2 = 2$. By Lemma \ref{L3.5}, 
rank ${\rm  Pic} \ S_2 \ge {\rm  rank \ Pic} \ P_2 - 1 = 1$. Since $\psi: {\bf P}^s \to P_2$ maps ${\bf P}^s$ isomorphically onto $S_2$, now it follows that rank 
${\rm Pic} \ {\bf P}^s =1$ and ${\rm Pic} \ {\bf P}^s = \mathbb{Z}$ proving that the restriction map is an isomorphism.       
\end{proof}

\begin{lemma} \label{L3.6}
The image of $\phi^*: {\rm Pic} \  P_1 \to {\rm Pic} \ {\bf P}^s$
is the subgroup of $\mathbb{Z}$ generated by $n$ and $d$. 
\end{lemma}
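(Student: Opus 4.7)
The plan is to exploit the projective bundle structure $\pi_1: P_1 = \mathbf{P}(V_1) \to U'_{L'}(n+1, d')$. Since $\pi_1$ is a projective bundle,
$${\rm Pic}(P_1) = \pi_1^*{\rm Pic}(U'_{L'}(n+1,d')) \oplus \mathbb{Z}\tau_1,$$
and because $(n+1, d') = 1$ and $g\ge 2$, Corollary \ref{coro1} gives ${\rm Pic}(U'_{L'}(n+1,d')) = \mathbb{Z}\theta_{L'}$. Combined with Lemma \ref{picPs}, the image of $\phi^*$ in ${\rm Pic}(\mathbf{P}^s) = \mathbb{Z}\cdot H$ is the subgroup generated by the two integers $a, b$ with $\phi^*\tau_1 = H^a$ and $\lambda^*\theta_{L'} = \phi^*(\pi_1^*\theta_{L'}) = H^b$. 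The task is thus to compute $a$ and $b$ and verify that $\gcd(a, b) = \gcd(n, d)$.

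For $a$: by \S\ref{P1}, $\phi^*\tau_1 = N$. To determine $N$ as a power of $H$, I would take determinants in the isomorphism $(\lambda\times 1_Y)^*\mathcal{U} \cong W\otimes p_Y^*\mathcal{O}_Y(-m)\otimes p_{\mathbf{P}^s}^*N$. Using \eqref{W} and $\det E = L$, one computes $\det W = p_Y^*(L'(m(n+1)))\otimes p_{\mathbf{P}}^*H^n$; combining this with the seesaw expression $\det\mathcal{U} = p_Y^*L'\otimes p_1^*(\det\mathcal{U}_{y_0})$ and restricting to $\mathbf{P}^s\times\{y_0\}$ for a nonsingular point $y_0\in Y$ gives
$$\lambda^*(\det\mathcal{U}_{y_0}) = H^n\otimes N^{n+1} \quad \text{in } {\rm Pic}(\mathbf{P}^s).$$
Combined with a Dr\'ezet--Narasimhan type identification of $\det\mathcal{U}_{y_0}$ as a known power $\theta_{L'}^{k_1}$ (adapted to the nodal case using results of \cite{UBB,UB4}), this yields the first linear relation between $a$ and $b$.

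For $b$: realize $\theta_{L'}$ as a determinant-of-cohomology line bundle on $U'_{L'}(n+1,d')$---e.g., $\det Rp_{1*}(\mathcal{U}\otimes p_Y^*A)^{\pm 1}$ for an auxiliary line bundle $A$ on $Y$ of suitable degree---and compute $\lambda^*\theta_{L'}$ via flat base change as the corresponding determinant of cohomology of $(\lambda\times 1_Y)^*\mathcal{U}\otimes p_Y^*A$ on $\mathbf{P}^s\times Y$. Twisting the extension \eqref{W} by $p_Y^*(\mathcal{O}_Y(-m)\otimes A)\otimes p_{\mathbf{P}^s}^*N$ splits the pushforward into contributions from $p_{\mathbf{P}^s,*}p_Y^*(E\otimes A)\otimes H$ and $p_{\mathbf{P}^s,*}p_Y^*(L^{-1}\otimes L'(m(n+1))\otimes A)$ (all twisted by $N$), both of which are trivial bundles tensored with explicit powers of $H$ and $N$. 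This produces the second relation.

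Solving the two linear relations in the two unknowns $a, b$ and checking that $\gcd(a, b) = \gcd(n, d)$ then finishes the proof. The main obstacle is the explicit identification of the exponent $k_1$ and the careful bookkeeping in the Grothendieck--Riemann--Roch step (including the second Chern class contribution of $W$), which in the smooth case is due to Dr\'ezet--Narasimhan and Ramanan \cite{R}; the hypothesis that $(n,d)$ is not coprime is precisely what guarantees $\gcd(a,b) = \gcd(n,d) > 1$, which will be used in the non-existence theorem that follows.
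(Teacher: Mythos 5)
Your framework is sound and matches the paper's in outline: since ${\rm Pic}\,P_1$ is generated by $\tau_1$ and $\pi_1^*\theta_{L'}$, the image of $\phi^*$ is the subgroup of ${\rm Pic}\,{\bf P}^s\cong\mathbb{Z}H$ generated by $\phi^*\tau_1=N$ and $\phi^*\pi_1^*\theta_{L'}=\lambda^*\theta_{L'}$, and everything reduces to computing these two integers. Your determinant computation $\lambda^*(\det\mathcal{U}_{y_0})=H^n\otimes N^{n+1}$ from \eqref{W} and \eqref{U} is also correct. But the proposal stops exactly where the lemma begins: the entire content of the statement is the arithmetic identity that the two exponents generate $\langle n,d\rangle\subset\mathbb{Z}$, and you never produce the exponents. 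You explicitly defer the identification of $k_1$ (with $\det\mathcal{U}_{y_0}=\theta_{L'}^{k_1}$) and the determinant-of-cohomology bookkeeping as ``the main obstacle,'' so the conclusion $\gcd(a,b)=\gcd(n,d)$ is asserted, not proved. Note also that your closing sentence inverts the logic: the identity $\gcd(a,b)=\gcd(n,d)$ must be \emph{established} for all $n,d$; the non-coprimality hypothesis plays no role in this lemma and is only invoked later in Theorem \ref{nonexistencethm}.

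For comparison, the paper (following Ramanan) computes the two exponents by a different route: it uses $\det T_{U_{L'}}=\theta_{L'}^2$ from \cite[Theorem 4]{UB3} together with \cite[Remark 2.11]{R} to get $\lambda^*\theta_{L'}=H^{r}$ with $r=(n+1)(nm-d)+nd'$, and it pins down $N=H^{n'}$ with $n'=l(d-mn)-en$ using the normalisation of $\mathcal{U}$ afforded by $ld'-e(n+1)=1$; one then checks directly that $\langle r,n'\rangle=\langle n,d\rangle$. Your Dr\'ezet--Narasimhan-style alternative (determinants of cohomology twisted by an auxiliary $A$) could in principle be pushed through, and has the advantage of avoiding the tangent-bundle identity, but in the nodal setting the exponent $k_1$ and the normalisation of $\mathcal{U}$ are precisely the nontrivial inputs, and without them your two ``linear relations'' are relations in three unknowns ($a$, $b$, $k_1$) that cannot be solved. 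To complete your argument you would need either to prove the nodal analogue of the Dr\'ezet--Narasimhan identification of $\det\mathcal{U}_{y_0}$, or to fall back on the route the paper actually takes.
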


\begin{proof}
      This can be proved similarly as \cite[Lemma 3.6]{R}.  We briefly sketch it, see \cite[Lemma 3.6]{R} for details. 
  Pic $P_1$ is generated by $\tau_1$ and $\pi^*\theta_{L'}$. 
 Hence the required image is generated by $\phi^*\pi^* \theta_{L'}= \lambda^* \theta_{L'}$ and $\phi^*\tau_1= N$. 
 Let $T_{U_{L'}}$ denote  the tangent bundle of $U'_{L'}(n+1,d')$.  
 By \cite[Theorem 4]{UB3},  $det (T_{U_{L'}})  = \theta_{L'}^2$. By \cite[Remark 2.11]{R}, 
 $\lambda^* \theta_{L'} = H^{2r}$, where $H$ is the restriction of the hyperplane bundle on ${\bf P}^s$ and $r =  (n+1)(nm-d)+nd'$. 
 Since $(n+1, d')=1$, there exists integers $l, e$ with $ld' - e(n+1)= 1$. Taking $N = H^{n'}$, one shows that $n' = l(d - mn) -en$, where $m$ is as chosen in assumptions. It follows that image of $\phi^*: {\rm Pic} \  P_1 \to {\rm Pic} \ {\bf P}^s$ is the subgroup of $\mathbb{Z}$ generated by $r$ and $n'$, which is the same as the subgroup generated by $n$ and $d$. 
\end{proof}

\begin{theorem}\label{nonexistencethm}
   Let $Y$ be an integral nodal curve of geometric genus $g(X) \ge 2$. If $n$ and $d$ are not coprime, then there does not exist a Poincar\'e family on any Zariski open subset of $U_L(n,d)$. 
\end{theorem}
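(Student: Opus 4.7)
The strategy is to argue by contradiction, adapting Ramanan's proof of non-existence over smooth curves \cite{R} to the nodal setting using the codimension estimates of Section \ref{codimstable} and the Picard-group computation of Corollary \ref{coro1}. Suppose, to the contrary, that a Poincar\'e bundle $\mathcal{E}_V$ exists on $V\times Y$ for some Zariski open $V\subset U'_L(n,d)$. By Theorems \ref{thmcodim'} and \ref{thmcodims} I may shrink $V$ so that $V\subset U^{'s}_L(n,d)$, and since the locus of bundles with stable pullback to $X$ is open dense (Theorem \ref{thmcodim}), I may arrange that the fixed reference bundle $E$ used in Sections \ref{P}--\ref{P2} lies in $V$.

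The first step is to globalize the constructions of $\mathbf{P},\,P_1,\,P_2$ over $V$ by using $\mathcal{E}_V$ in place of the fixed $E$. I would set
$$\widetilde{\mathbf{P}} \;:=\; \mathbf{P}\!\bigl(R^1(p_V)_* \mathcal{H}\!om(p_Y^*(L^{-1}\otimes L'(m(n+1))),\,\mathcal{E}_V)\bigr) \longrightarrow V,$$
whose fibre over $E$ is $\mathbf{P}$, and form the tautological extension on $\widetilde{\mathbf{P}}\times Y$; this yields a morphism $\widetilde\lambda:\widetilde{\mathbf{P}}^s\to U'_{L'}(n+1,d')$ restricting to $\lambda$ on the fibre. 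Similarly I would define $\widetilde V_2$ and $\widetilde P_2 = \mathbf{P}(\widetilde V_2)$ over $V\times U'_{L'}(n+1,d')$ by replacing $E$ with $\mathcal{E}_V$ in Section \ref{P2}, together with its subvariety $\widetilde S_2$ of fibrewise injections; the inclusion of the subfamily $q_V^*\mathcal{E}_V\otimes q_{\widetilde{\mathbf{P}}}^*\widetilde H\hookrightarrow\widetilde W$ produces $\widetilde\psi:\widetilde{\mathbf{P}}^s\to\widetilde S_2$ whose restriction to $\mathbf{P}^s$ is $\psi$. The analogous construction for $P_1$ uses only $\mathcal{U}$ and so extends over $V$ without needing $\mathcal{E}_V$.

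The contradiction would come from two incompatible descriptions of $\mathrm{Pic}\,\mathbf{P}^s \cong \mathbb{Z}$ (Lemma \ref{picPs}). By Lemma \ref{L3.6}, the image of $\phi^*\colon \mathrm{Pic}\,P_1 \to \mathrm{Pic}\,\mathbf{P}^s$ is $\langle n,d\rangle\subset\mathbb{Z}$. On the other hand, the existence of $\mathcal{E}_V$ upgrades $\psi$ to the relative $\widetilde\psi$; combining the relative tautological extension, which exhibits $\widetilde H$ as the difference between a class coming from $\widetilde P_2$ via $\widetilde\psi^*$ and a class pulled back from $V\times U'_{L'}(n+1,d')$, with the identification $\mathrm{Pic}(V\times U'_{L'}(n+1,d')) \cong \mathbb{Z}\oplus\mathbb{Z}$ (using $\mathrm{Pic}(V)\cong\mathbb{Z}$ from Corollary \ref{coro1} and $\mathrm{Pic}\,U'_{L'}(n+1,d')\cong\mathbb{Z}$), I would deduce that the class $H$ itself must lie in $\langle n,d\rangle$, contradicting $\gcd(n,d)>1$.

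The main obstacle is the Picard-group bookkeeping at the very end: I need the nodal analogues of Ramanan's identifications $\lambda^*\theta_{L'} = H^{2r}$ and $N=H^{n'}$ used in Lemma \ref{L3.6}, which rest on \cite[Theorem 4]{UB3} computing $\det T_{U_{L'}}=\theta_{L'}^2$, and I must verify these survive under the relative construction over $V$. The codimension estimates of Theorems \ref{thmcodim'}, \ref{thmcodims} and \ref{thmcodim} are what make this possible: they ensure that Lemma \ref{L3.5} extends to the relative setting so that $\mathrm{Pic}\,\widetilde S_2 = \mathrm{Pic}\,\widetilde P_2$ modulo at most one explicit divisor class, and that restrictions from $V$, $U_L(n,d)$ and $U'_{L'}(n+1,d')$ to their stable loci preserve Picard groups. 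Matching the exceptional divisor class to a combination in $\langle n,d\rangle$, rather than one producing extra generators, is the delicate step.
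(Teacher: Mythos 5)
Your overall strategy is the right one, and the first half of your plan coincides with the paper's proof: reduce to an open $V\subset U'_L(n,d)$ containing a reference bundle $E$ with $p^*E$ stable, assume a Poincar\'e family $\mathcal{E}_V$ on $V\times Y$, and form the relative projective bundle $\widetilde{\mathbf{P}}=\mathbf{P}\bigl(R^1(p_V)_*(Hom(p_Y^*(L^{-1}\otimes L'(m(n+1))),\mathcal{E}_V))\bigr)$ over $V$ --- this is exactly the paper's $\mathcal{P}$, with fibre $\mathbf{P}$ over $E$. The gap is in the mechanism you propose for the contradiction. The paper never relativises $P_2$, and it does not use any splitting $\mathrm{Pic}(V\times U'_{L'}(n+1,d'))\cong\mathbb{Z}\oplus\mathbb{Z}$ (which is itself doubtful: for an arbitrary open $V$ the group $\mathrm{Pic}\,V$ is only a quotient of $\mathrm{Pic}\,U'_L(n,d)\cong\mathbb{Z}$, possibly finite, and the product decomposition of the Picard group would need a separate argument). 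The one decisive observation missing from your plan is that the tautological family on $\mathcal{P}$ extends $\phi:\mathbf{P}^s\to S_1$ to a morphism $\bar\phi:\mathcal{P}^s\to S_1$ which is an isomorphism onto an \emph{open subset} of $S_1$. Consequently $\phi^*$ factors as the composite of restriction maps
$$\mathrm{Pic}\,P_1\to\mathrm{Pic}\,S_1\to\mathrm{Pic}\,\mathcal{P}^s\to\mathrm{Pic}\,\mathbf{P}^s\cong\mathbb{Z},$$
each of which is surjective (the first two because they are restrictions to open subsets, the last because the Picard group of the projective bundle $\mathcal{P}$ surjects onto that of a fibre, combined with Lemma \ref{picPs}). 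Hence $\phi^*$ would be surjective onto $\mathbb{Z}$, while Lemma \ref{L3.6} says its image is $\langle n,d\rangle=\gcd(n,d)\,\mathbb{Z}$; this forces $(n,d)=1$ and gives the contradiction.

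Your substitute for this step --- forcing the class $H$ into $\langle n,d\rangle$ via a relative $\widetilde\psi$ over $V\times U'_{L'}(n+1,d')$ and matching an ``exceptional divisor class'' --- is precisely the step you yourself flag as delicate and leave unverified, and it is not needed: $P_2$, $\psi$ and Lemma \ref{L3.5} enter the argument only once, in the proof of Lemma \ref{picPs} (to show that $\mathrm{Pic}\,\mathbf{P}^s$ has rank one), and that lemma is already available in the absolute setting before any Poincar\'e family is assumed. Replace the second half of your plan with the open-immersion argument above and the proof closes.
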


\begin{proof}
     Since $U_L(n,d)$ is irreducible and $U'_L(n,d)$ is an open subset of $U_L(n,d)$, any Zariski open subset of $U_L(n,d)$ 
intersects $U'_L(n,d)$ in a non-empty Zariski open subset $V$. Hence it suffices to show that there does not exist a Poincar\'e family 
on any Zariski open subset $V$ of $U'_L(n,d)$.

       Suppose that there exists a Poincar\'e family $\mathcal{E} \to V \times Y$. Since Hom $(L^{-1}\otimes L'(m(n+1)), \mathcal{E}_v) =0$ 
for all $v \in V$, 
$$\mathcal{V} := R^1(p_V)_ * (Hom (p_Y^*(L^{-1}\otimes L'(m(n+1)))), \mathcal{E}))$$
is a vector bundle on $V$. Let $\mathcal{P} := \bf{P}(\mathcal{V})$.  The projective space ${\bf P}$ defined in subsection \ref{P} 
is the fibre of $\mathcal{P}$ over $E \in V$. The projective bundle $\mathcal{P}$ parametrises a family $\mathcal{W}$ 
of vector bundles of rank $n+1$ and degree $d'$. Let $\mathcal{P}^s$ be the open subset of $\mathcal{P}$ parametrising stable 
vector bundles. Clearly, ${\bf P}^s \subset \mathcal{P}^s$ and the morphism $\phi: {\bf P}^s \to S_1$ extends to   
$\bar{\phi}: \mathcal{P}^s \to S_1$ which is an isomorphism onto an open subset of $S_1$. 

            Since  ${\bf P} = {\bf P}(\mathcal{V})_E$ is a fibre of $\mathcal{P}= {\bf P}(\mathcal{V})$, the restriction map ${\rm Pic} \ {\bf P}(\mathcal{V}) 
 \to {\rm Pic} \  {\bf P}$ is a surjective (with the relative hyperplane bundle $H_V$ on $\mathcal{P}$ mapping to $H$). Using Lemma \ref{picPs}, one gets a surjection  
 ${\rm Pic} \ \mathcal{P}  \to {\rm Pic} \ {\bf P}^s$ which factors through a surjection ${\rm Pic} \ \mathcal{P}^s \to  {\rm Pic} \ {\bf P}^s$.  

    Since $\mathcal{P}^s$ is an open subset of $S_1$ and $S_1$ an open subset of $P_1$,  one  has a composite of surjections 
    $${\rm Pic} \ P_1 \to {\rm Pic} \ S_1 \to {\rm Pic} \ \mathcal{P}^s \to {\rm Pic} \ {\bf P}^s = \mathbb{Z}\, .$$   
  By Lemma \ref{L3.6}, this composite is a surjection if and only if $n$ and $d$ are coprime. Hence the family $\mathcal{E}$ 
  on $V$ exists if and only if $(n,d)=1$.
\end{proof}

\end{document}